\DeclareMathOperator{\tr}{tr}
\DeclareMathOperator{\dvol}{dvol}
\DeclareMathOperator{\Ric}{Ric}
\DeclareMathOperator{\Rm}{Rm}
\DeclareMathOperator{\End}{End}
\DeclareMathOperator{\Sym}{Sym}
\DeclareMathOperator{\hash}{\sharp}
\DeclareMathOperator{\Met}{Met}
\newcommand{\omu}{\overline{\mu}}
\newcommand{\cB}{\widetilde{B}}
\newcommand{\cD}{\widetilde{D}}
\newcommand{\cE}{\widetilde{E}}
\newcommand{\cR}{\widetilde{R}}
\newcommand{\cS}{\widetilde{S}}
\newcommand{\cT}{\widetilde{T}}
\newcommand{\cY}{\widetilde{Y}}
\newcommand{\cphi}{\widetilde{\phi}}
\newcommand{\csigma}{\widetilde{\sigma}}
\newcommand{\hE}{\widehat{E}}
\newcommand{\hT}{\widehat{T}}
\newcommand{\hmW}{\widehat{\mW}}
\newcommand{\hGamma}{\widehat{\Gamma}}
\newcommand{\hsigma}{\widehat{\sigma}}
\newcommand{\lp}{\langle}
\newcommand{\rp}{\rangle}
\newcommand{\lv}{\lvert}
\newcommand{\rv}{\rvert}
\newcommand{\mC}{\mathcal{C}}
\newcommand{\mF}{\mathcal{F}}
\newcommand{\mS}{\mathcal{S}}
\newcommand{\mW}{\mathcal{W}}
\newcommand{\bN}{\mathbb{N}}
\newcommand{\bR}{\mathbb{R}}
\newcommand{\cRic}{\widetilde{\Ric}}
\def\sideremark#1{\ifvmode\leavevmode\fi\vadjust{\vbox to0pt{\vss
 \hbox to 0pt{\hskip\hsize\hskip1em
 \vbox{\hsize3cm\tiny\raggedright\pretolerance10000
 \noindent #1\hfill}\hss}\vbox to8pt{\vfil}\vss}}}
\newcommand{\comment}[1]{}
\newtheorem{thm}{Theorem}[section]
\newtheorem{prop}[thm]{Proposition}
\newtheorem{lem}[thm]{Lemma}
\newtheorem{cor}[thm]{Corollary}
\theoremstyle{definition}
\newtheorem{defn}[thm]{Definition}
\theoremstyle{remark}
\newtheorem{remark}[thm]{Remark}
\numberwithin{equation}{section}
\begin{document}

\title[The weighted $\sigma_k$-curvature of manifolds with density]{A notion of the weighted $\sigma_k$-curvature for manifolds with density}
\author{Jeffrey S. Case}
\thanks{Partially supported by NSF Grant DMS-1004394}
\address{Department of Mathematics \\ Princeton University \\ Princeton, NJ 08544}
\email{jscase@math.princeton.edu}
\keywords{smooth metric measure space, manifold with density, weighted $\sigma_k$-curvature, gradient Ricci soliton, $\mW$-functional}
\subjclass[2000]{Primary 53C21; Secondary 35J60, 58E11}
\begin{abstract}
We propose a natural definition of the weighted $\sigma_k$-curvature for a manifold with density; i.e.\ a triple $(M^n,g,e^{-\phi}\dvol)$.  This definition is intended to capture the key properties of the $\sigma_k$-curvatures in conformal geometry with the role of pointwise conformal changes of the metric replaced by pointwise changes of the measure.  We justify our definition through three main results.  First, we show that shrinking gradient Ricci solitons are local extrema of the total weighted $\sigma_k$-curvature functionals when the weighted $\sigma_k$-curvature is variational.  Second, we characterize the shrinking Gaussians as measures on Euclidean space in terms of the total weighted $\sigma_k$-curvature functionals.  Third, we characterize when the weighted $\sigma_k$-curvature is variational.  These results are all analogues of their conformal counterparts, and in the case $k=1$ recover some of the well-known properties of Perelman's $\mW$-functional.
\end{abstract}
\maketitle


\section{Introduction}
\label{sec:intro}

First introduced by Viaclovsky~\cite{Viaclovsky2000}, the $\sigma_k$-curvature is an important Riemannian invariant studied in conformal geometry which has many important connections to questions in analysis, topology and geometry.  For example, the study of the $\sigma_k$-curvature has led to the development of techniques for solving and characterizing solutions of fully-nonlinear second order PDE~\cite{CGS1989,ChangGurskyYang2002,LiLi2003,ShengTrudingerWang2007}, it has led to the development of new geometric functional inequalities~\cite{GuanWang2004,ShengTrudingerWang2007}, it is closely related to the Euler characteristic in even dimensions~\cite{Viaclovsky2000}, a fact which has been particularly fruitful in the study of four-manifolds~\cite{ChangGurskyYang2003,Gursky1998}, and it can be used to characterize spaceforms~\cite{GurskyViaclovsky2001} and give criteria for the existence of metrics with positive Ricci curvature~\cite{ChangGurskyYang2002,GuanViaclovskyWang2003,GurskyViaclovsky2001}.  In the special case $k=1$, the $\sigma_k$-curvature is a nonzero constant multiple of the scalar curvature, an object whose analytic, topological and geometric significance is much better understood; cf.\ \cite{GromovLawson1983,LeeParker1987,SchoenYau1979s}.

First introduced by Hamilton~\cite{Hamilton1982}, the Ricci flow is another important tool in Riemannian geometry with many analytic, topological, and geometric applications.  Perhaps the most famous is Perelman's resolution~\cite{Perelman1} of the Poincar\'e Conjecture, the proof of which relies heavily on all of these components.  In particular, we highlight the role of Perelman's $\mW$-functional
\[ \mW(g,\phi,\tau) := \int_M \left[ \tau(R+\lv\nabla\phi\rv^2)+\phi-n\right] (4\pi\tau)^{-\frac{n}{2}}e^{-\phi}\dvol . \]
Perelman showed that the $\mW$-functional and the $\nu$-entropy $\nu(g)$ --- defined by minimizing the $\mW$-functional over all pairs $(\phi,\tau)$ such that $\int (4\pi\tau)^{-\frac{n}{2}}e^{-\phi}\dvol=1$ --- are nondecreasing along the Ricci flow, and moreover, if $\nu(g)$ is finite, then $(M^n,g)$ is $\kappa$-noncollapsed~\cite{Perelman1}.  In this way the $\mW$-functional and the $\nu$-entropy are analogous to the total scalar curvature functional and the Yamabe constant, respectively.  Indeed, Perelman described (cf.\ Section~\ref{sec:smms}) multiple ways in which the $\mW$-functional should be regarded as the appropriate notion of the total scalar curvature functional on a manifold with density, while positivity of the Yamabe constant implies noncollapsing via uniform control on the $L^2$-Sobolev constant.  Here, a manifold with density is a triple $(M^n,g,e^{-\phi}\dvol)$ of a Riemannian manifold $(M^n,g)$ together with a smooth measure $e^{-\phi}\dvol$ determined by a function $\phi\in C^\infty(M)$ and the Riemannian volume element $\dvol$ determined by $g$.  The relationship between the $\mW$-functional and the Yamabe functional can be made more precise through the language of smooth metric measure spaces, wherein one attaches a notion of ``dimension'' to the measure $e^{-\phi}\dvol$ which interpolates between the Riemannian case (when this is the dimension of $M$) and the case where Perelman's $\mW$-functional arises (when this is infinite); one approach to this is presented in~\cite{Case2013y}.

Inspired by the successes of the $\sigma_k$-curvatures and the $\mW$-functional in Riemannian geometry, we both offer definitions of the weighted $\sigma_k$-curvatures on a manifold with density and their attendant $\mW_k$-functionals and establish some properties of these invariants.  It is our hope that the weighted $\sigma_k$-curvatures can be used to analytically study gradient Ricci solitons.  For this reason, this article focuses on showing that the weighted $\sigma_k$-curvatures have similar properties as those of the $\sigma_k$-curvatures as used to study Einstein metrics and certain geometric PDEs.

There are three properties of the $\sigma_k$-curvatures which we wish to capture.  First, the $\sigma_k$-curvatures are variational in a given conformal class if and only if $k\in\{1,2\}$ or the conformal class is locally conformally flat~\cite{BransonGover2008,Viaclovsky2000}.  Second, when the $\sigma_k$-curvature is variational, Einstein metrics are always critical points of the volume-normalized total $\sigma_k$-curvature functional~\cite{Viaclovsky2000}.  Moreover, Einstein metrics of nonzero scalar curvature are local extrema of this functional~\cite{Viaclovsky2000}.  Third, Einstein metrics are the only critical points in the conformal class of the standard sphere of the volume-normalized total $\sigma_k$-curvature functional in the positive elliptic $k$-cones~\cite{Viaclovsky2000} (cf.\ \cite{CGS1989,LiLi2003}).

As we shall see, the weighted $\sigma_k$-curvatures have the same properties provided one replaces ``locally conformally flat'' by ``flat'' --- that is, all sectional curvatures vanish --- and one replaces ``Einstein'' by ``gradient Ricci soliton.''  Explanations for why these are the right analogues are given in Section~\ref{sec:smms} (cf.\ \cite{Case2011t,Case2014s}).  To that end, we here discuss our results specifically in the case of the weighted $\sigma_1$- and $\sigma_2$-curvatures on manifolds with density, which are the two cases that we expect to be of the most interest.

Given a manifold with density $(M^n,g,e^{-\phi}\dvol)$ and a parameter $\lambda\in\bR$, we define the weighted $\sigma_1$- and $\sigma_2$-curvatures by
\begin{align*}
\csigma_{1,\phi} & := \frac{1}{2}\left( R + 2\Delta\phi - \lv\nabla\phi\rv^2 \right) + \lambda(\phi-n) , \\
\csigma_{2,\phi} & := \frac{1}{2}\left( (\csigma_{1,\phi})^2 - \lv\Ric + \nabla^2\phi - \lambda g\rv^2 \right) .
\end{align*}
We denote by $\sigma_{k,\phi}$ the weighted $\sigma_k$-curvature with $\lambda=0$; in particular, $\sigma_{1,\phi}$ is one-half the weighted scalar curvature used by Perelman~\cite{Perelman1}.  Note that, in an effort to keep our notation simple, we do not explicitly record the parameter $\lambda$ in our notation; instead, we always write $\csigma_{k,\phi}$ when $\lambda$ is possibly nonzero and specify its value in context.

We include the parameter $\lambda$ for two reasons.  First, one readily shows (cf.\ \cite{Hamilton_1995}) that if $(M^n,g,e^{-\phi}\dvol)$ is a gradient Ricci soliton with $\Ric+\nabla^2\phi=\lambda g$, then the weighted $\sigma_k$-curvatures are both constant.  Second, the $\mW_k$-functionals
\[ \mW_k(g,\phi,\tau) := \int_M \tau^k\csigma_{k,\phi}(4\pi\tau)^{-\frac{n}{2}}e^{-\phi}\dvol_g \]
for $\csigma_{k,\phi}$ defined in terms of $\lambda=\frac{1}{2\tau}$ have many nice properties.  For example, Perelman~\cite{Perelman1} showed that critical points of $\mW_1$ in
\[ \mC_1 := \left\{ (g,\phi,\tau) \colon \int_M (4\pi\tau)^{-\frac{n}{2}}e^{-\phi}\dvol_g = 1 \right\} \]
(resp.\ $\mC_1(g):=\{(\phi,\tau)\colon(g,\phi,\tau)\in\mC_1\}$) are shrinking gradient Ricci solitons (resp.\ have $\csigma_{1,\phi}$ constant).  We prove the following analogue for the $\mW_2$-functional.

\begin{thm}
\label{thm:intro/vary_w2}
Let $M^n$ be a compact manifold without boundary.
\begin{enumerate}
\item $(g,\phi,\tau)\in\mC_1$ is a critical point of $\mW_2\colon\mC_1\to\bR$ if and only if
\begin{equation}
\label{eqn:intro/vary_w2/einstein}
\delta_\phi d\cRic_\phi + \Rm\cdot\cRic_\phi + (\cRic_\phi)^2 - \csigma_{1,\phi}\cRic_\phi + \frac{1}{2\tau}\cRic_\phi = 0
\end{equation}
for $\cRic_\phi:=\Ric+\nabla^2\phi-\frac{1}{2\tau}$.
\item Fix a Riemannian metric $g$ on $M$.  Then $(\phi,\tau)\in\mC_1(g)$ is a critical point of $\mW_2\colon\mC_1(g)\to\bR$ if and only if there is a constant $c\in\bR$ such that
\begin{align}
\label{eqn:intro/vary_w2/csc} \csigma_{2,\phi} - \frac{1}{2\tau}\csigma_{1,\phi} & = c , \\
\label{eqn:intro/vary_w2/csc_trfree} \int_M \tr\left(\cB_\phi + \cE_{2,\phi} - \frac{1}{2\tau}\cE_{1,\phi}\right)\,e^{-\phi}\dvol_g & = 0
\end{align}
for $\cB_\phi + \cE_{2,\phi} - \frac{1}{2\tau}\cE_{1,\phi}$ the left-hand side of~\eqref{eqn:intro/vary_w2/einstein}.
\end{enumerate}
In particular, if $(M^n,g,e^{-\phi}\dvol)$ is a shrinking gradient Ricci soliton satisfying $\Ric+\nabla^2\phi=\frac{1}{2\tau}g$, then $(g,\phi,\tau)$ is a critical point of $\mW_2\colon\mC_1\to\bR$.
\end{thm}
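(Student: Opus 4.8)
### Proof Proposal

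The plan is to compute the first variation of $\mW_2$ directly, using the standard formulas for the variations of $R$, $\Ric$, $\nabla^2\phi$, and $\dvol$ under $\delta g = h$ and $\delta\phi = \psi$, and then to identify the resulting Euler--Lagrange operator. I would first treat part (1): parametrize admissible variations $(h,\psi,s)$ subject to the constraint $(g,\phi,\tau)\in\mC_1$, which ties together $\operatorname{tr}_g h$, $\psi$, and $s=\delta\tau$ through the condition $\delta\int(4\pi\tau)^{-n/2}e^{-\phi}\dvol=0$. Because the integrand of $\mW_2$ is $\tau^2\csigma_{2,\phi}(4\pi\tau)^{-n/2}e^{-\phi}$ with $\lambda=\tfrac{1}{2\tau}$, the $\tau$-derivative is comparatively easy (it mostly rescales), so the real content is the variation in $g$ and $\phi$. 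The key algebraic identity to exploit is that $\csigma_{2,\phi}=\tfrac12\bigl((\csigma_{1,\phi})^2-|\cRic_\phi|^2\bigr)$, so $\delta\csigma_{2,\phi}=\csigma_{1,\phi}\,\delta\csigma_{1,\phi}-\langle\cRic_\phi,\delta\cRic_\phi\rangle$; this reduces everything to $\delta\csigma_{1,\phi}$ and $\delta\cRic_\phi$, both of which are first-order expressions in $(h,\psi)$.

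The central step is then an integration by parts. After substituting $\delta\csigma_{1,\phi}$ and $\delta\cRic_\phi$ and integrating against the weighted measure $e^{-\phi}\dvol$, one should move all derivatives off $h$ and $\psi$. This is where the weighted divergence $\delta_\phi$ and the weighted Lichnerowicz-type operator $\delta_\phi d$ on symmetric $2$-tensors enter: the term $\langle\cRic_\phi,\delta\cRic_\phi\rangle$ contributes a $\langle\delta_\phi d\cRic_\phi,h\rangle$ piece plus curvature terms from commuting derivatives past $\cRic_\phi$ (these are exactly the $\Rm\cdot\cRic_\phi$ and $(\cRic_\phi)^2$ terms), while the $\csigma_{1,\phi}\,\delta\csigma_{1,\phi}$ term produces the $-\csigma_{1,\phi}\cRic_\phi$ piece after using the weighted Bianchi/Schur identities, and the explicit $\tfrac{1}{2\tau}$ in $\lambda$ gives the $+\tfrac{1}{2\tau}\cRic_\phi$ term. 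Collecting coefficients of $h$ yields the tensor $\cB_\phi+\cE_{2,\phi}-\tfrac{1}{2\tau}\cE_{1,\phi}$, i.e.\ the left-hand side of~\eqref{eqn:intro/vary_w2/einstein}; the coefficient of $\psi$ must be shown to vanish identically or to be a constraint consequence, which is the analogue of the $k=1$ fact that the $\phi$-variation of $\mW_1$ reproduces the trace of the metric variation (this is where one uses that changes in $\phi$ and conformal changes in $g$ interact in a controlled way on a manifold with density). Part (2) is then a restriction: freezing $g$, only variations with $h=0$ are allowed, so the Euler--Lagrange equation becomes the pair consisting of the pointwise equation~\eqref{eqn:intro/vary_w2/csc} (from the $\psi$-variation modulo the constraint) together with the single integral condition~\eqref{eqn:intro/vary_w2/csc_trfree} (from the remaining $\tau$-direction, using that the trace of the full tensor pairs against the constant that the constraint produces).

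For the final assertion I would argue as follows. Suppose $(M^n,g,e^{-\phi}\dvol)$ is a shrinking gradient Ricci soliton with $\Ric+\nabla^2\phi=\tfrac{1}{2\tau}g$; equivalently $\cRic_\phi=0$. Then every term on the left-hand side of~\eqref{eqn:intro/vary_w2/einstein} is manifestly zero, since each is $\cRic_\phi$ acted on by something, $\cRic_\phi$ squared, or a derivative of $\cRic_\phi$ (and $\delta_\phi d\cRic_\phi$ vanishes because $\cRic_\phi\equiv0$). Hence equation~\eqref{eqn:intro/vary_w2/einstein} holds, and by part (1) the triple $(g,\phi,\tau)$ is a critical point of $\mW_2\colon\mC_1\to\bR$. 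The only point needing a word of justification is that the soliton, together with the normalization of $\tau$ built into the soliton equation, does in fact lie in $\mC_1$ after rescaling $\phi$ by an additive constant — the constant simply rescales the total measure to $1$ and changes $\csigma_{1,\phi}$ by that same additive constant, which does not affect $\cRic_\phi=0$ nor the vanishing of~\eqref{eqn:intro/vary_w2/einstein}.

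The main obstacle I anticipate is bookkeeping in the integration-by-parts step: correctly tracking the curvature terms produced by commuting two covariant derivatives past $\cRic_\phi$ (to land exactly on $\Rm\cdot\cRic_\phi+(\cRic_\phi)^2$ with the right signs and contractions) and verifying that the weighted second Bianchi identity — namely $\delta_\phi\cRic_\phi=-\tfrac12 d\csigma_{1,\phi}$ up to normalization, which holds precisely because $\cRic_\phi$ is the Bakry--\'Emery type tensor — is what makes the $\csigma_{1,\phi}$-linear terms reorganize into $-\csigma_{1,\phi}\cRic_\phi$. I expect the handling of the constraint (the Lagrange-multiplier structure forcing the appearance of the constant $c$ and the integral condition in part (2)) to be routine once part (1) is in hand, and the final corollary to be immediate from the vanishing of $\cRic_\phi$.
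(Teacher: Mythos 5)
Your plan is essentially the paper's own route, and it is correct in outline: write $\csigma_{2,\phi}=\tfrac12\bigl((\csigma_{1,\phi})^2-\lv\cRic_\phi\rv^2\bigr)$, linearize $\csigma_{1,\phi}$ and $\cRic_\phi$, integrate by parts against $e^{-\phi}\dvol$, and reorganize via a weighted Weitzenb\"ock identity for symmetric $2$-tensors to produce the tensor $\cB_\phi+\cE_{2,\phi}-\tfrac{1}{2\tau}\cE_{1,\phi}$ as the coefficient of $h$, with the $\psi$- and $\tr h$-dependence appearing only through the combination $\psi-\tfrac12\tr h$. Your treatment of the soliton case via $\cRic_\phi=0$ is immediate and matches.

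Two points worth flagging. First, for the ``only if'' direction of part (1) and for part (2) you need more than just collecting coefficients: after concluding the tensor $T:=\cB_\phi+\cE_{2,\phi}-\tfrac{1}{2\tau}\cE_{1,\phi}$ vanishes (or after freezing $g$), the term $\int F\,(\psi-\tfrac12\tr h)$ with $F=\tau^2\csigma_{2,\phi}-\tfrac{\tau}{2}\csigma_{1,\phi}$ is killed by the $\mC_1$-constraint only if $F$ is \emph{constant}; this is not automatic and is supplied by the conservation law $\delta_\phi T=-dF/\tau^2$ (Corollary~\ref{cor:divE2}), which the paper obtains by diffeomorphism invariance of $\mW_2$. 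Your passing reference to ``the weighted Bianchi/Schur identities'' suggests you see this, but it should be stated explicitly since it is load-bearing, not bookkeeping. Second, for part (2) the paper sidesteps a direct $\tau$-derivative of $\tau^2\csigma_{2,\phi}(4\pi\tau)^{-n/2}$ (whose $\tau$-dependence, via $\lambda=\tfrac{1}{2\tau}$ inside $\cRic_\phi$ and $\cY_\phi$, is not a mere rescaling) by invoking the scale invariance $\mW_k(cg,\phi,\tau)=\mW_k(g,\phi,c^{-1}\tau)$ to trade the $\tau$-variation for $h=cg$; your direct approach would also work but involves substantially more computation than ``it mostly rescales'' suggests. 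The Bianchi identity sign you quote, $\delta_\phi\cRic_\phi=-\tfrac12 d\csigma_{1,\phi}$, is also off: with $\csigma_{1,\phi}=\tfrac12\cR_\phi$ the correct normalization is $\delta_\phi\cRic_\phi=d\csigma_{1,\phi}$, equivalently $\delta_\phi\cE_{1,\phi}=-d\csigma_{1,\phi}$.
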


See Section~\ref{sec:smms} for our conventions in defining the first two summands in~\eqref{eqn:intro/vary_w2/einstein} and Section~\ref{sec:defn} for additional properties of the $\mW_k$-functionals.  Though we shall not do so explicitly, our proofs are easily modified to prove similar statements relevant to steady (resp.\ expanding) gradient Ricci solitons by taking instead the parameter $\lambda=0$ (resp.\ $\lambda=-\frac{1}{2\tau}$); cf.\ \cite{FeldmanIlmanenNi2005,Perelman1}.

Note that we could combine Theorem~\ref{thm:intro/vary_w2} and Perelman's computations~\cite{Perelman1} of the first variation of the $\mW_1$-functional to find a functional whose critical points in $\mC_1(g)$ have $\csigma_{2,\phi}$ constant while preserving the property that shrinking gradient Ricci solitons are critical points within $\mC_1$.  We have instead opted to use the total weighted $\sigma_2$-curvature functional $\mW_2$ both because it more neatly parallels Perelman's definition of the $\mW$-functional and because in the case of smooth metric measure spaces there is essentially no choice as to which definition of a ``$\mW_2$-functional'' to use so that the analogues of gradient Ricci solitons are critical points; see~\cite{Case2014s} for details.  We expect a continued study of the weighted $\sigma_2$-curvature to shed additional insights into the relative merits of the functionals $\mW_2+t\mW_1$ for various values of $t$.

Perelman~\cite{Perelman1} showed that if $(M^n,g)$ is a compact shrinking gradient Ricci soliton with $(\phi,\tau)\in\mC_1(g)$ the corresponding critical point of the $\mW_1$-functional and if $(\phi_1,\tau_1)\in\mC_1(g)$ is a critical point of $\mW_1\colon\mC_1(g)\to\bR$, then $(\phi_1,\tau_1)=(\phi,\tau)$.  This result should be regarded as the weighted analogue of Obata's classification~\cite{Obata1971} of conformally Einstein constant scalar curvature metrics.  While a general Obata-type result for conformally Einstein metrics of constant $\sigma_k$-curvature is not known, it is known in the conformally flat setting~\cite{Viaclovsky2000}.  We prove the following weighted analogue.

\begin{thm}
 \label{thm:intro/obata/2}
 Suppose that $(\phi,\tau)\in C^\infty(\bR^n)\times\bR_+$ is a critical point of the functional $\mW_2\colon\mC_1(dx^2)\to\bR$.  Suppose additionally that
 \begin{equation}
  \label{eqn:intro/ellipticity/2}
  \begin{split}
   \csigma_{1,\phi} & < \frac{1}{2\tau}, \\
   \csigma_{2,\phi} & > \frac{1}{2\tau}\csigma_{1,\phi} - \frac{1}{8\tau^2},
  \end{split}
 \end{equation}
 and that the function $\cphi(x)=\lv x\rv^2\phi\left(x/\lv x\rv^2\right)$ extends to a $C^2$ function in $\bR^n$ such that $\cphi(0)>0$.  Then there is a point $x_0\in\bR^n$ such that
 \begin{equation}
  \label{eqn:intro/obata/2}
  \phi(x) = \frac{\lv x-x_0\rv^2}{4\tau} .
 \end{equation}
\end{thm}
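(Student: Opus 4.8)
The plan is to reduce the hypothesis to a single fully nonlinear elliptic equation on $\bR^n$ via Theorem~\ref{thm:intro/vary_w2}, to use the assumption on the Kelvin transform $\cphi$ to compactify the picture and control $\phi$ near infinity, and then to establish rigidity by the method of moving planes, finishing with an elementary ODE analysis.

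Since $(\bR^n,dx^2)$ is flat we have $\Rm=0$, $\Ric=0$ and $R=0$, so Theorem~\ref{thm:intro/vary_w2}(2) --- applied with $g=dx^2$ and the finite measure $e^{-\phi}\dvol$ guaranteed by $(\phi,\tau)\in\mC_1(dx^2)$ --- tells us that a critical point of $\mW_2\colon\mC_1(dx^2)\to\bR$ satisfies
\begin{equation}
\label{eqn:obata-reduced}
\csigma_{2,\phi}-\frac{1}{2\tau}\csigma_{1,\phi}=c
\end{equation}
for some constant $c\in\bR$, together with one additional integral condition, where now $\csigma_{1,\phi}=\Delta\phi-\frac12|\nabla\phi|^2+\frac{1}{2\tau}(\phi-n)$ and $\csigma_{2,\phi}=\frac12\big((\csigma_{1,\phi})^2-|\nabla^2\phi-\frac{1}{2\tau}g|^2\big)$. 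A direct substitution shows the Gaussian $\phi(x)=|x-x_0|^2/(4\tau)$ solves~\eqref{eqn:obata-reduced} with $c=0$, with $\csigma_{1,\phi}\equiv\csigma_{2,\phi}\equiv 0$. Conversely, once one knows that the trace-free part of $\nabla^2\phi$ vanishes identically the theorem follows by elementary means: $\nabla^2\phi=fg$ forces $f$ constant (commute third derivatives), whereupon~\eqref{eqn:obata-reduced} forces $(\csigma_{1,\phi}-\frac{1}{2\tau})^2$ to be constant and hence $\csigma_{1,\phi}$ constant, which forces $f\in\{0,\frac{1}{2\tau}\}$; the value $f=0$ is excluded because $e^{-\phi}\dvol$ is finite, so $\nabla^2\phi\equiv\frac{1}{2\tau}g$, and then the normalization $\int_{\bR^n}(4\pi\tau)^{-n/2}e^{-\phi}\,dx=1$ pins down the additive constant to give $\phi(x)=|x-x_0|^2/(4\tau)$. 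So the whole content of the theorem is the rigidity statement that $(\nabla^2\phi)^\circ\equiv 0$.

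For the rigidity I would first exploit~\eqref{eqn:intro/ellipticity/2} and the Kelvin-transform hypothesis. The inequalities~\eqref{eqn:intro/ellipticity/2} are exactly the requirement that the symmetric tensor naturally attached to $(\csigma_{1,\phi},\cRic_\phi,\tau)$ --- with $\cRic_\phi:=\nabla^2\phi-\frac{1}{2\tau}g$ --- lie in the positive elliptic $2$-cone in the sense of~\cite{Viaclovsky2000} (indeed~\eqref{eqn:obata-reduced} turns the second inequality of~\eqref{eqn:intro/ellipticity/2} into $c>-\frac{1}{8\tau^2}$), so that~\eqref{eqn:obata-reduced} is a degenerate-elliptic fully nonlinear second-order equation for $\phi$ obeying the Hopf and comparison maximum principles. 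The hypothesis on $\cphi(x)=|x|^2\phi(x/|x|^2)$ is a compactification device: because the inversion $x\mapsto x/|x|^2$ together with the induced change of density $\phi\mapsto\cphi$ should act as a symmetry of the weighted $\sigma_k$-curvature (this is why $\cphi$ enters the statement), $\cphi$ satisfies the same equation~\eqref{eqn:obata-reduced} on $\bR^n\setminus\{0\}$, and its $C^2$-extension across $0$ with $\cphi(0)>0$ yields the asymptotics $\phi(x)=\cphi(0)|x|^2+O(|x|)$ as $|x|\to\infty$, with $\nabla\phi$ and $\nabla^2\phi$ controlled accordingly; in particular $\nabla^2\phi$ is bounded and $e^{-\phi}$ has Gaussian decay.

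With these in hand I would run the method of moving planes on $\phi$ --- started from hyperplanes near infinity using the Kelvin-transformed data to initiate the sliding, and propagated by the comparison principle --- to conclude that $\phi$ is symmetric under reflection across every affine hyperplane through some $x_0$, hence radial about $x_0$; equation~\eqref{eqn:obata-reduced} then reduces to an ODE in $r=|x-x_0|$ which, with the asymptotics and the normalization above, is solved only by the stated Gaussian. I expect the main obstacle to be precisely this moving-plane step: adapting the classical machinery (Caffarelli--Gidas--Spruck, Li--Li, Chang--Gursky--Yang) to the fully nonlinear equation~\eqref{eqn:obata-reduced}, which carries both gradient and zeroth-order terms and may degenerate on the boundary of the elliptic cone, and checking that the Kelvin-transformed data are exactly what is needed to begin the procedure. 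An alternative more in keeping with the name ``Obata'' is to integrate by parts using the divergence-free property on flat space of a weighted first Newton tensor of $\cRic_\phi$, and combine the result with~\eqref{eqn:obata-reduced} and the integral conclusion of Theorem~\ref{thm:intro/vary_w2}(2) to obtain $\int_{\bR^n}\mQ_\phi\big((\nabla^2\phi)^\circ,(\nabla^2\phi)^\circ\big)e^{-\phi}\,dx\le 0$ for a bilinear form $\mQ_\phi$ on trace-free symmetric $2$-tensors kept positive-definite by~\eqref{eqn:intro/ellipticity/2} --- the boundary terms at infinity vanishing by the Gaussian decay --- which forces $(\nabla^2\phi)^\circ\equiv 0$; here the difficulty shifts to pinning down the exact identity and verifying its sign.
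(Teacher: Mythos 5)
Your proposal presents two routes; the paper follows only the second (integration by parts), and the key mechanism that makes it work is missing from your sketch.

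The paper's argument does \emph{not} go through moving planes. It uses the tensor $\hE_{2,\phi}=\cE_{2,\phi}-\frac{1}{2\tau}\cE_{1,\phi}$, which is the trace-adjusted second weighted Newton tensor of $\cRic_\phi$ built with the shifted scalar $\cY_\phi-\frac{1}{2\tau}$, so that $\hsigma_{1,\phi}=\csigma_{1,\phi}-\frac{1}{2\tau}$ and $\hsigma_{2,\phi}=\csigma_{2,\phi}-\frac{1}{2\tau}\csigma_{1,\phi}+\frac{1}{8\tau^2}$. Flatness plus Corollary~\ref{cor:divE2} gives $\delta_\phi\hE_{2,\phi}=0$, and the criticality hypothesis (Theorem~\ref{thm:intro/vary_w2}) gives the vanishing of $\int\tr\hE_{2,\phi}\,e^{-\phi}\dvol$. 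Integrating $\lp\hE_{2,\phi},\cRic_\phi\rp$ by parts, with your decay hypotheses killing the boundary term, yields
\[ 0=\int_{\bR^n}\lp\hE_{2,\phi},\cRic_\phi\rp e^{-\phi}\dvol=\int_{\bR^n}\bigl(3\hsigma_{3,\phi}-\hsigma_{1,\phi}\hsigma_{2,\phi}\bigr)e^{-\phi}\dvol, \]
using Lemma~\ref{lem:contr_newton}. The decisive step, which your proposal leaves as ``pinning down the exact identity and verifying its sign,'' is Corollary~\ref{cor:newton}: in the \emph{negative} weighted elliptic $2$-cone (note: not the positive cone as you wrote; the hypotheses~\eqref{eqn:intro/ellipticity/2} say $\hsigma_{1,\phi}<0$ and $\hsigma_{2,\phi}>0$), the weighted Newton inequality gives $3\hsigma_{3,\phi}\geq\hsigma_{1,\phi}\hsigma_{2,\phi}$ pointwise with equality iff $\cRic_\phi=0$. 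So the integrand is nonnegative and the integral vanishes, forcing $\cRic_\phi\equiv 0$, i.e.\ $\nabla^2\phi=\frac{1}{2\tau}g$, whence $\phi$ is the Gaussian.

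Two further remarks. First, the integrand $\lp\hE_{2,\phi},\cRic_\phi\rp$ is not a quadratic form $\mQ_\phi\bigl((\nabla^2\phi)^\circ,(\nabla^2\phi)^\circ\bigr)$; it is the cubic expression $3\hsigma_{3,\phi}-\hsigma_{1,\phi}\hsigma_{2,\phi}$, and its sign is an algebraic fact about weighted elementary symmetric polynomials (Theorem~\ref{thm:weighted_newton}), not ellipticity of a quadratic form. Your reduction ``the whole content is $(\nabla^2\phi)^\circ\equiv 0$'' is correct but superfluous, since the Newton-inequality rigidity gives $\cRic_\phi=0$ outright. Second, your primary proposal --- moving planes à la Caffarelli--Gidas--Spruck/Li--Li --- is a genuinely different and substantially harder route: it would need a comparison principle, an invariance of~\eqref{eqn:obata-reduced} under a suitable weighted Kelvin transform, and a way to initiate the plane motion, none of which are established here (and the paper does not attempt them). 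You correctly flag this as the main obstacle; the integral-identity route is designed precisely to avoid it.
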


The conditions~\eqref{eqn:intro/ellipticity/2} specify that $(\phi,\tau)$ is in the negative elliptic $2$-cone; see Section~\ref{sec:defn} for further discussion.  In this way, Theorem~\ref{thm:intro/obata/2} states that the only pairs $(\phi,\tau)\in\mC_1(dx^2)$ which are in the negative elliptic $2$-cone and are critical points of the $\mW_2$-functional are the shrinking Gaussians $\Bigl(\bR^n,dx^2,e^{-\frac{\lv x-x_0\rv^2}{4\tau}}\dvol\Bigr)$.

For any $k\in\bN$, it is known that compact Einstein metrics of nonzero scalar curvature (locally conformally flat if $k\geq3$) are local extrema of the volume-normalized total $\sigma_k$-curvature functionals, and moreover, they are strict local extrema unless the metric is the standard metric on the sphere~\cite{Viaclovsky2000}.  The analogue of this result for the weighted $\sigma_2$-curvature is as follows.

\begin{thm}
\label{thm:intro/local_extrema_W2}
Let $(M^n,g,e^{-\phi}\dvol)$ be a shrinking gradient Ricci soliton with $\Ric+\nabla^2\phi=\frac{1}{2\tau}g$ and $(\phi,\tau)\in\mC_1(g)$.  Let $\{(\phi_t,\tau_t)\}_{t\in(-\varepsilon,\varepsilon)}\subset\mC_1(g)$ be a smooth variation of $(\phi,\tau)$.  Then
\begin{equation}
\label{eqn:intro/local_extrema_W2}
\left.\frac{d^2}{dt^2}\mW_2\left(g,\phi_t,\tau_t\right)\right|_{t=0} \leq 0 .
\end{equation}
Moreover, equality holds in~\eqref{eqn:intro/local_extrema_W2} for nontrivial variations $(\phi_t,\tau_t)\in\mC_1(g)$ if and only if $(M^n,g,e^{-\phi}\dvol)$ factors as an isometric product with a shrinking Gaussian and the variation is tangent to a curve through shrinking Gaussians in the Euclidean factor.
\end{thm}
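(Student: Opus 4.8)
The plan is to compute the second variation of $\mW_2$ directly, in the spirit of Viaclovsky's treatment of Einstein metrics~\cite{Viaclovsky2000}, using that on a shrinking gradient Ricci soliton the tensor $\cRic_\phi:=\Ric+\nabla^2\phi-\tfrac1{2\tau}g$ vanishes identically while $\csigma_{1,\phi}$ is a constant $\kappa$, and then reading the sign and the rigidity off a weighted Lichnerowicz--Obata inequality. By the last assertion of Theorem~\ref{thm:intro/vary_w2} the soliton $(g,\phi,\tau)$ is a critical point of $\mW_2\colon\mC_1\to\bR$, hence of $\mW_2\colon\mC_1(g)\to\bR$, so the left-hand side of~\eqref{eqn:intro/local_extrema_W2} is the value on the first-order data $(\psi,s)$ of a well-defined quadratic form $Q$ on $T_{(\phi,\tau)}\mC_1(g)=\{(\psi,s):\int_M\psi\,dm=-\tfrac{n}{2\tau}s\}$, with $dm:=(4\pi\tau)^{-n/2}e^{-\phi}\dvol_g$. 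I would compute $Q$ along the affine path $(\phi+t\psi,\tau+ts)$; since it leaves $\mC_1(g)$ only at second order and the first variation annihilates $T_{(\phi,\tau)}\mC_1(g)$, the discrepancy is a single explicit term, the Lagrange multiplier of $\mW_2|_{\mC_1(g)}$ (equivalently the constant $c$ in~\eqref{eqn:intro/vary_w2/csc}, which at the soliton equals $\tfrac12\kappa^2-\tfrac1{2\tau}\kappa$) times an explicit quadratic in $(\psi,s)$. The needed linearizations, with $\lambda:=\tfrac1{2\tau}$, $\dot\lambda=-\tfrac{s}{2\tau^2}$ and $g$ fixed, are $\dot{\cRic}_\phi=\nabla^2\psi-\dot\lambda\,g$ and $\dot{\csigma}_{1,\phi}=\Delta_\phi\psi+\lambda\psi+\dot\lambda(\phi-n)$, where $\Delta_\phi u:=\Delta u-\langle\nabla\phi,\nabla u\rangle$.

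Writing $\mW_2=\tfrac12\int_M\tau^2\bigl((\csigma_{1,\phi})^2-|\cRic_\phi|^2\bigr)\,dm$, the term $-\tfrac12\int\tau^2|\cRic_\phi|^2\,dm$ has vanishing value and vanishing first variation at the soliton, so it contributes exactly $-\tau^2\int_M|\dot{\cRic}_\phi|^2\,dm\le 0$ to $Q$, while $\tfrac12\int\tau^2(\csigma_{1,\phi})^2\,dm$ contributes $+\tau^2\int_M(\dot{\csigma}_{1,\phi})^2\,dm$ together with terms proportional to $\kappa$ (from $\csigma_{1,\phi}\,\ddot{\csigma}_{1,\phi}$ and from the variation of the weight $\tau^2\,dm$). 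I would then simplify using the soliton identities $R+\Delta\phi=n\lambda$, $\nabla R=2\Ric(\nabla\phi)$ and $R+|\nabla\phi|^2-2\lambda\phi\equiv-2\kappa$; the normalization identities $\int_M\phi\,dm=2\tau\kappa+\tfrac n2$ and $\int_M\dot{\csigma}_{1,\phi}\,dm=-2\lambda s\kappa$ (the latter because $(g,\phi,\tau)$ is the critical point of $\mW_1|_{\mC_1(g)}$); integration by parts against $\Delta_\phi$; and the weighted Bochner formula $\int_M(\Delta_\phi u)^2\,dm=\int_M|\nabla^2u|^2\,dm+\lambda\int_M|\nabla u|^2\,dm$, valid because $\Ric+\nabla^2\phi=\lambda g$. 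The $\kappa$-dependent terms do not cancel separately; they must be combined with the Lagrange-multiplier correction so that $Q$ collapses to a quadratic form in the $dm$-mean-zero part $\psi_0$ of $\psi$ and in $s$ (subject to $\int_M\psi\,dm=-n\lambda s$).

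The resulting form should be manifestly nonpositive, with the essential analytic input being the weighted Lichnerowicz inequality $\int_M\psi_0\,(-\Delta_\phi\psi_0)\,dm\ge\lambda\int_M\psi_0^2\,dm$ for $dm$-mean-zero functions --- valid since $\Ric+\nabla^2\phi=\lambda g>0$ --- together with the pointwise bound $|\nabla^2\psi|^2\ge\tfrac1n(\Delta\psi)^2$. This gives~\eqref{eqn:intro/local_extrema_W2}. Tracing equality back through these inequalities forces $\tracefree{\nabla^2\psi}\equiv 0$ and equality in the weighted Lichnerowicz inequality for $\psi_0$; by the rigidity case of weighted Lichnerowicz--Obata the latter forces $(M^n,g,e^{-\phi}\dvol)$ to split isometrically as a shrinking Gaussian on some $\bR^\ell$ times a lower-dimensional shrinking gradient Ricci soliton, and then $\psi$ must be affine-linear on the $\bR^\ell$ factor and constant on the other --- precisely the variations tangent to a curve of shrinking Gaussians in the Euclidean factor. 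For noncompact $M$ the integrations by parts must additionally be justified using the Gaussian-type decay of $\phi$ on a shrinking soliton.

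The main obstacle is the middle step: organizing the second variation --- including the variation of the measure $\tau^2\,dm$ and the Lagrange-multiplier correction --- so that the indefinite $\kappa$-dependent terms combine with the rest into a single expression controlled by weighted Lichnerowicz--Obata. This is the weighted analogue of the cancellations behind Viaclovsky's Einstein result and should be a careful but essentially routine calculation; the conceptual ingredient is invoking the sharp rigidity in weighted Lichnerowicz--Obata for the equality statement.
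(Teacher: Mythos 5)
Your strategy---directly compute the constrained second variation, simplify using the soliton identity $\cRic_\phi=0$, decompose against the eigenfunction $\phi_0$, and close with the weighted Bochner identity together with the Cheng--Zhou weighted Lichnerowicz--Obata theorem (Proposition~\ref{prop:weighted_lichnerowicz})---is the same family of argument the paper uses, and you have correctly located the Lagrange multiplier $\tfrac12\kappa^2-\tfrac1{2\tau}\kappa$ and the needed linearizations. The organizational difference is that the paper differentiates the first-variation formula of Proposition~\ref{prop:first_variation2} directly, so that the weighted Newton tensor $\cT_{1,\phi}$ appears and collapses to $\csigma_{1,\phi}g$ on the soliton; this gives the key factorization at once, rather than splitting $\mW_2$ into $(\csigma_{1,\phi})^2-\lv\cRic_\phi\rv^2$ and recombining by hand. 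Both should work with enough care.

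There is, however, a genuine gap in the conclusion. After all the algebra the form is \emph{not} manifestly nonpositive: it is bounded above by $\tau^2\bigl(\csigma_{1,\phi}-\tfrac1{2\tau}\bigr)$ times $\int_M\bigl[\lv\nabla\psi_1\rv^2-\tfrac1{2\tau}\psi_1^2+\tfrac1{2\tau}(c+\tfrac{\alpha}{\tau})^2\phi_0^2\bigr]\,dm$, where one writes $\psi_0=\psi_1+c\phi_0$ with $\int\psi_1\phi_0\,dm=0$. The second factor is $\geq0$ by weighted Lichnerowicz, but the first is $<0$ only because $\csigma_{1,\phi}\leq0$ on every compact shrinker---a nontrivial external fact (Yokota, cited through Lemma~\ref{lem:modified_sigmak_grs}). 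The bound $\int\phi_0^2\,dm\leq\tfrac n2$ from Lemma~\ref{lem:grs_potential} (which uses $R\geq0$ for shrinkers) is also needed to dominate the $\alpha^2$ block. Your plan does not flag either input, and without them the sign is undetermined. Two smaller corrections: the pointwise bound $\lv\nabla^2\psi\rv^2\geq\tfrac1n(\Delta\psi)^2$ plays no role---on the soliton the weighted Bochner identity with $\Ric_\phi=\tfrac1{2\tau}g$ makes the $\lv\nabla^2\psi\rv^2$ contributions from $\lv\dot\cRic_\phi\rv^2$ and $(\dot\csigma_{1,\phi})^2$ cancel \emph{exactly}, so the rigidity statement is extracted from the equality case of Cheng--Zhou rather than from $\tracefree{\nabla^2\psi}\equiv0$; and the eigenvalue of $\phi_0$ for $-\Delta_\phi$ is $\tfrac1\tau$, not $\tfrac1{2\tau}$, which is precisely what makes the $(c+\alpha/\tau)^2$ structure emerge and the $c^2$ cross-terms resolve.
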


Theorem~\ref{thm:intro/vary_w2}, Theorem~\ref{thm:intro/obata/2}, and Theorem~\ref{thm:intro/local_extrema_W2} can be generalized to all $k$.  Theorem~\ref{thm:obata/k} is a direct generalization of Theorem~\ref{thm:intro/obata/2} which allows for arbitrary choices of the parameter $k$.  As the generalization of Theorem~\ref{thm:intro/vary_w2}, we show in Theorem~\ref{thm:variational_status} that the weighted $\sigma_k$-curvature is variational on a given Riemannian manifold if and only if $k\in\{1,2\}$ or the manifold is flat.  Moreover, we characterize the critical points of $\mW_k\colon\mC_1(g)\to\bR$ in these cases.  Theorem~\ref{thm:local_extrema_Wk} is a direct generalization of Theorem~\ref{thm:intro/local_extrema_W2} for the shrinking Gaussians.  Similar results can be formulated for steady and expanding gradient Ricci solitons; see Remark~\ref{rk:stability_steady_expanding}.

One of the next steps in realizing the potential of the weighted $\sigma_k$-curvatures as the ``right'' analogue of the $\sigma_k$-curvatures is to study the analytic problem of finding critical points of the $\mW_2$-functional and other related functionals.  Using work of Rothaus~\cite{Rothaus1981} and the Ricci flow, Perelman~\cite{Perelman1} constructed minimizers of the $\mW_1$-functional on any compact Riemannian manifold; an alternative approach which doesn't rely on Ricci flow is implicit in~\cite{Case2013y}.  One difficulty in extending this result to the $\mW_2$-functional is that its Euler--Lagrange equation~\eqref{eqn:intro/vary_w2/csc} for $\tau$ fixed is not always elliptic.  This can be introduced by introducing the appropriate notion of ``weighted elliptic cones,'' as is proven in Proposition~\ref{prop:elliptic_cone}.

The approach of this article is greatly inspired by previous work of the author~\cite{Case2013y} on the so-called weighted Yamabe problem on smooth metric measure spaces.  Indeed, one can similarly define the weighted $\sigma_k$-curvatures in that setting and obtain results similar those contained in this article.  This is carried out in~\cite{Case2014s}.  We have opted to present the case of the weighted $\sigma_k$-curvatures separately here for two reasons.  First, the treatment for smooth metric measure spaces is more technical, and having a separate treatment in the case of manifolds with density is helpful in motivating many of the arguments used in the more general case.  Second, the reduction from smooth metric measure spaces to manifolds with density involves taking limits in a way which is sometimes cumbersome and confusing.  The direct proofs given in this article are much more transparent.

Finally, we remark that a deficiency of the $\sigma_k$-curvatures as a tool to study Einstein metrics is that they are not always variational.  This problem can be overcome by the renormalized volume coefficients $v_k(g)$, which have the properties that they are always variational, that $v_k=\sigma_k$ for locally conformally flat metrics, that $v_k(e^{2\omega}g)$ depends only on the two-jet of $\omega$, and that Einstein metrics with nonzero scalar curvature are local extrema of the functional $g\mapsto\int v_k(g)\dvol_g$ within a conformal class of fixed-volume metrics~\cite{ChangFang2008,ChangFangGraham2012,Graham2009}.  Moreover, the renormalized volume coefficients can be regarded as perturbations of the $\sigma_k$-curvatures through lower order terms; see~\cite[Theorem~1.4]{Graham2009}.  We expect that similar ``weighted renormalized volume coefficients'' can be defined on manifolds with density.  Indeed, the author has defined the weighted analogue of $v_3$ by writing down a local formula and checking that it has the desired properties~\cite{Case2016v}.  A definition of the weighted renormalized volume coefficients of all orders seems to require the development of a suitable notion of weighted Poincar\'e metrics.

This article is organized as follows.  In Section~\ref{sec:polynomial} we discuss the main algebraic properties of what we call the weighted elementary symmetric polynomials, which formally can be regarded as the elementary symmetric polynomials on infinitely many variables, all but finitely many of which are the same.  In Section~\ref{sec:smms} we recall some basic ideas and definitions relevant to our study of manifolds with density.  In Section~\ref{sec:defn} we define the weighted $\sigma_k$-curvatures and their associated weighted Newton tensors and $\mW_k$-functionals.  In Section~\ref{sec:first_variation} we compute the first variation of the $\mW_1$- and $\mW_2$-functionals.  In Section~\ref{sec:divergence} we compute the divergence of the weighted Newton tensors, establishing in particular the conservation law for the weighted $\sigma_k$-curvatures on flat manifolds.  In Section~\ref{sec:obata} we prove our Obata theorems for the weighted $\sigma_k$-curvature.  In Section~\ref{sec:variational_status} we characterize when the weighted $\sigma_k$-curvature is variational.  In Section~\ref{sec:local_extrema} we show that shrinking gradient Ricci solitons are local extrema for the total weighted $\sigma_k$-curvature functionals when the weighted $\sigma_k$-curvature is variational.


\section{Algebraic Preliminaries}
\label{sec:polynomial}

The purpose of this section is to discuss some of the basic algebraic properties of the following notion for ``weighted'' elementary symmetric polynomials.  In particular, we prove that they satisfy inequalities analogous to the usual Newton inequalities for the elementary symmetric polynomials (cf.\ \cite{HardyLittlewoodPolya1952}).

\begin{defn}
\label{defn:defn1}
Let $k\in\bN_0:=\bN\cup\{0\}$.  The \emph{$k$-th weighted elementary symmetric polynomial} $\sigma_k^\infty\colon\bR\times\bR^n\to\bR$ is recursively defined by $\sigma_0^\infty(\mu_0;\mu)=1$ and
\[ k\sigma_k^\infty(\mu_0;\mu) = \sigma_{k-1}^\infty(\mu_0;\mu)\sum_{j=0}^n\mu_j + \sum_{i=1}^{k-1}\sum_{j=1}^n (-1)^i\sigma_{k-1-i}^\infty(\mu_0;\mu)\mu_j^i \]
for $k\geq1$ and $\mu=(\mu_1,\dotsc,\mu_n)\in\bR^n$.
\end{defn}

Note that $\sigma_k^\infty(0;\mu)=\sigma_k(\mu)$ for $\sigma_k$ the usual $k$-th elementary symmetric polynomial on $n$-variables.  Clearly $\sigma_k^\infty(\mu_0;\mu)$ is independent of the ordering of the components of $\mu\in\bR^n$.

From a purely formal standpoint, the $k$-th weighted elementary symmetric polynomial can be understood as the limit
\[ \sigma_k^\infty(\mu_0;\mu) = \lim_{m\to\infty} \sigma_k\bigg(\mu,\underbrace{\frac{\mu_0}{m},\dotsc,\frac{\mu_0}{m}}_{\text{$m$ times}}\bigg) . \]
With this in mind, the algebraic properties of the weighted $k$-th elementary symmetric polynomial established in the remainder of this section should come as no surprise.

The first goal in this section is to prove that the weighted elementary symmetric polynomials satisfy the usual Newton inequalities when regarded as inequalities for the elementary symmetric polynomials on infinitely many variables.  In order to accomplish that goal, we need two equivalent formulations of the weighted elementary symmetric polynomials.  First, we find a relation between $\sigma_k^\infty(\mu_0;\mu)$ and $\sigma_k^\infty(\mu_1;\mu)$.  In particular, this formula shows that we can regard $\sigma_k^\infty(\mu_0;\mu)$ as a perturbation of $\sigma_k(\mu)$ through $\mu_0$ and the lower order terms $1,\sigma_1(\mu),\dotsc,\sigma_{k-1}(\mu)$.

\begin{prop}
\label{prop:ordinary_sigmak}
Given $k\in\bN_0$ and $\mu_0,\mu_1\in\bR$ and $\mu\in\bR^n$, it holds that
\begin{equation}
\label{eqn:new_ordinary_sigmak}
\sigma_k^\infty(\mu_0+\mu_1;\mu) = \sum_{j=0}^k \frac{\mu_1^j}{j!}\sigma_{k-j}^\infty(\mu_0;\mu) .
\end{equation}
In particular,
\begin{equation}
\label{eqn:ordinary_sigmak}
\sigma_k^\infty(\mu_1;\mu) = \sum_{j=0}^k \frac{\mu_1^j}{j!}\sigma_{k-j}(\mu) .
\end{equation}
\end{prop}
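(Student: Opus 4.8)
The plan is to establish the general identity~\eqref{eqn:new_ordinary_sigmak} by induction on $k$ working directly from the recursion of Definition~\ref{defn:defn1}, and then to obtain~\eqref{eqn:ordinary_sigmak} as the special case $\mu_0=0$, using the observation recorded right after Definition~\ref{defn:defn1} that $\sigma_j^\infty(0;\mu)=\sigma_j(\mu)$. The structural feature of the recursion that drives the argument is that it expresses $k\,\sigma_k^\infty(\nu;\mu)$ as $\bigl(\nu+\sum_{j=1}^n\mu_j\bigr)\sigma_{k-1}^\infty(\nu;\mu)$ plus a linear combination $\sum_i c_i(\mu)\,\sigma_{k-1-i}^\infty(\nu;\mu)$ whose coefficients $c_i(\mu)$ depend only on $\mu\in\bR^n$; in particular the first slot $\nu$ enters only in the coefficient of $\sigma_{k-1}^\infty(\nu;\mu)$, and there only additively.

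For the base case $k=0$ both sides of~\eqref{eqn:new_ordinary_sigmak} equal $1$. For the inductive step at level $k$, I would apply the recursion with first argument $\nu=\mu_0+\mu_1$, substitute the inductive hypothesis for each $\sigma_m^\infty(\mu_0+\mu_1;\mu)$ with $m\le k-1$, and peel the $\mu_1$ off the coefficient of $\sigma_{k-1}^\infty$ by writing $\mu_0+\mu_1+\sum_j\mu_j=\mu_1+\bigl(\mu_0+\sum_j\mu_j\bigr)$. Collecting the coefficient of $\mu_1^j$ then produces exactly two contributions: the terms in which $\mu_1$ entered only through the inductive hypothesis reassemble, via the recursion of Definition~\ref{defn:defn1} read at index $k-j$, into $\tfrac{k-j}{j!}\,\sigma_{k-j}^\infty(\mu_0;\mu)$, while the terms carrying the peeled-off factor $\mu_1$ contribute $\tfrac{1}{(j-1)!}\,\sigma_{k-j}^\infty(\mu_0;\mu)$. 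Since $\tfrac{k-j}{j!}+\tfrac{1}{(j-1)!}=\tfrac{k}{j!}$, summing and dividing through by $k$ yields the coefficient $\tfrac{1}{j!}$ required by~\eqref{eqn:new_ordinary_sigmak}; the endpoints $j=0$ (only the first contribution present, namely the recursion at level $k$ itself) and $j=k$ (only the second, reducing to $\tfrac{1}{(k-1)!}=\tfrac{k}{k!}$) are checked directly.

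The only real obstacle is the index bookkeeping: after the substitution one has a double sum over the recursion index $i$ and the exponent $a$ of $\mu_1$, and one must verify that fixing $a=j$ leaves $i$ ranging over precisely $1\le i\le k-j-1$, so that the inner sum is literally the right-hand side of the defining recursion at level $k-j$ and so that the inductive hypothesis applies to each $\sigma_{k-1-i}^\infty(\mu_0+\mu_1;\mu)$ that occurs. Once the ranges are aligned the computation is mechanical. As a remark I would point out the conceptual picture behind this: $\sigma_k^\infty(\nu;\mu)$ is the coefficient of $t^k$ in the generating function $e^{t\nu}\prod_{j=1}^n(1+t\mu_j)$ — the rigorous content of the limit heuristic $\sigma_k^\infty(\nu;\mu)=\lim_{m\to\infty}\sigma_k(\mu,\nu/m,\dots,\nu/m)$ noted earlier — whereupon~\eqref{eqn:new_ordinary_sigmak} is simply the Cauchy product coming from $e^{t(\mu_0+\mu_1)}=e^{t\mu_1}\cdot e^{t\mu_0}$; the induction is preferable only because it avoids justifying convergence of the formal series.
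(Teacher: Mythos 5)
Your proof is correct and follows essentially the same route as the paper: strong induction on $k$ via the defining recursion applied at $\nu=\mu_0+\mu_1$, splitting off the extra $\mu_1$ from the coefficient of $\sigma_{k-1}^\infty$, and matching coefficients of $\mu_1^j$ so that the remaining double sum reassembles into the recursion at level $k-j$, yielding $\tfrac{k-j}{j!}+\tfrac{1}{(j-1)!}=\tfrac{k}{j!}$. The generating-function remark is exactly Proposition~\ref{prop:generating_function}, which the paper indeed derives from this result rather than the reverse, so your preference for the inductive argument as primary is also aligned with the paper.
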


\begin{proof}

The proof is by strong induction.  Clearly~\eqref{eqn:new_ordinary_sigmak} is true when $k=0$.  Suppose it is known to be true for $j=0,\dotsc,k-1$.  For convenience, set $N_j=\sum_{i=1}^n\mu_i^j$, so that
\[ k\sigma_k^\infty(\mu_0+\mu_1;\mu) = (\mu_0+\mu_1)\sigma_{k-1}^\infty(\mu_0+\mu_1;\mu) + \sum_{j=0}^{k-1}(-1)^j\sigma_{k-1-j}^\infty(\mu_0+\mu_1;\mu) N_{j+1} . \]
By the inductive hypothesis we thus find that
\begin{align*}
k\sigma_k^\infty(\mu_0+\mu_1;\mu) & = \sum_{j=0}^{k-1}\frac{\mu_1^{j+1}}{j!}\sigma_{k-1-j}^\infty(\mu_0;\mu) + \sum_{j=0}^{k-1}\frac{\mu_1^j}{j!}\mu_0\sigma_{k-1-j}^\infty(\mu_0;\mu) \\
& \quad + \sum_{j=0}^{k-1}\sum_{\ell=0}^{k-1-j}(-1)^j\frac{\mu_1^\ell}{\ell!}\sigma_{k-1-j-\ell}^\infty(\mu_0;\mu)N_{j+1} \\
& = k\sum_{j=0}^k\frac{\mu_1^j}{j!}\sigma_{k-j}^\infty(\mu_0;\mu),
\end{align*}
where the second equality follows by reindexing the first summation, switching the order of the third summation, and using the definition of the weighted elementary symmetric polynomials.  The final claim~\eqref{eqn:ordinary_sigmak} follows by taking $\mu_0=0$ in~\eqref{eqn:new_ordinary_sigmak}.
\end{proof}

Second, we have the following generating function for the weighted elementary symmetric polynomials analogous to the usual generating function for the elementary symmetric polynomials, namely the case $\mu_0=0$ in~\eqref{eqn:generating_function} below.

\begin{prop}
\label{prop:generating_function}
For any $(\mu_0;\mu)\in\bR\times\bR^n$ and $t\in\bR$ it holds that
\begin{equation}
\label{eqn:generating_function}
e^{\mu_0t}\prod_{j=1}^n\left(1+\mu_jt\right) = \sum_{j=0}^\infty \sigma_j^\infty(\mu_0;\mu)t^j .
\end{equation}
\end{prop}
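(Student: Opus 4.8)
The plan is to prove the generating function identity by verifying that both sides, regarded as formal power series in $t$, satisfy the same recursion on their coefficients, together with the same initial condition. Write $F(t) := e^{\mu_0 t}\prod_{j=1}^n(1+\mu_j t)$ and let $a_k := \sigma_k^\infty(\mu_0;\mu)$. The right-hand side is by definition $\sum_{k\geq 0} a_k t^k$. Since the recursion in Definition~\ref{defn:defn1} expresses $k\,a_k$ in terms of $a_0,\dotsc,a_{k-1}$, it suffices to show that the Taylor coefficients $b_k$ of $F$ satisfy $b_0 = 1$ and the same recursion $k\,b_k = b_{k-1}\sum_{j=0}^n \mu_j + \sum_{i=1}^{k-1}\sum_{j=1}^n (-1)^i b_{k-1-i}\mu_j^i$ for $k\geq 1$; uniqueness of the solution of this triangular linear system then forces $b_k = a_k$ for all $k$.

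First I would compute $b_0 = F(0) = 1$, which matches $\sigma_0^\infty = 1$. For the recursion, the natural tool is the logarithmic derivative: from $\log F(t) = \mu_0 t + \sum_{j=1}^n \log(1+\mu_j t)$ one gets
\[
\frac{F'(t)}{F(t)} = \mu_0 + \sum_{j=1}^n \frac{\mu_j}{1+\mu_j t} = \sum_{j=0}^n \mu_j + \sum_{j=1}^n \sum_{i\geq 1} (-1)^i \mu_j^{i+1} t^i,
\]
valid as formal power series (for $|t|$ small enough if one prefers the analytic viewpoint). Hence $F'(t) = F(t)\cdot c(t)$ where $c(t) = \sum_{i\geq 0} c_i t^i$ with $c_0 = \sum_{j=0}^n \mu_j$ and $c_i = \sum_{j=1}^n (-1)^i \mu_j^{i+1}$ for $i\geq 1$. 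Writing $F(t) = \sum b_k t^k$, comparison of the coefficient of $t^{k-1}$ in $F' = Fc$ yields $k\,b_k = \sum_{i=0}^{k-1} b_{k-1-i} c_i$, which upon substituting the values of $c_i$ and reindexing (replacing the summation index so that the power of $\mu_j$ reads $\mu_j^{i}$ with $i$ from $1$ to $k-1$) is exactly the defining recursion for $\sigma_k^\infty$. This completes the argument.

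I do not anticipate a serious obstacle here; the only point requiring a little care is bookkeeping in the reindexing of the double sum so that the coefficient identity lines up precisely with Definition~\ref{defn:defn1} — in particular matching the $i=0$ term of $c(t)$ against the term $\sigma_{k-1}^\infty(\mu_0;\mu)\sum_{j=0}^n\mu_j$ and the $i\geq 1$ terms against $\sum_{i=1}^{k-1}\sum_{j=1}^n(-1)^i\sigma_{k-1-i}^\infty(\mu_0;\mu)\mu_j^i$. An alternative, essentially equivalent, route is to invoke Proposition~\ref{prop:ordinary_sigmak}: by~\eqref{eqn:new_ordinary_sigmak} with the roles of $\mu_0$ and $\mu_1$ arranged appropriately, $\sum_k \sigma_k^\infty(\mu_0;\mu) t^k = \bigl(\sum_j \tfrac{(\mu_0 t)^j}{j!}\bigr)\bigl(\sum_k \sigma_k(\mu) t^k\bigr) = e^{\mu_0 t}\prod_{j=1}^n (1+\mu_j t)$, where the last factorization is the classical generating function for the ordinary elementary symmetric polynomials; this reduces the claim to the $\mu_0 = 0$ case plus Proposition~\ref{prop:ordinary_sigmak}, and I would likely present this shorter version, using the logarithmic-derivative computation only as a sanity check.
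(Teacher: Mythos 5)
Your proposal is correct, and the route you say you would actually present — expand $e^{\mu_0 t}$ and $\prod_j(1+\mu_j t)$ as power series, take the Cauchy product, and recognize the coefficients via Proposition~\ref{prop:ordinary_sigmak} — is exactly the proof given in the paper (the paper uses the $\mu_0=0$ specialization~\eqref{eqn:ordinary_sigmak}, but this is cosmetically the same as using~\eqref{eqn:new_ordinary_sigmak} as you describe). The logarithmic-derivative argument you develop first is a genuinely different and self-contained alternative: it verifies~\eqref{eqn:generating_function} directly against the recursion in Definition~\ref{defn:defn1} without invoking Proposition~\ref{prop:ordinary_sigmak} at all, so it would, for instance, let one \emph{derive}~\eqref{eqn:ordinary_sigmak} from the generating function rather than the other way around. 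Both buy the same result; the paper's route is shorter given that Proposition~\ref{prop:ordinary_sigmak} has already been established, while yours is more robust if one wants the generating function first. One small clarification on the bookkeeping you flag as the only delicate step: the coefficient identity coming from $F'=Fc$, namely $k\,b_k = b_{k-1}\sum_{j=0}^n\mu_j + \sum_{i=1}^{k-1}\sum_{j=1}^n(-1)^i b_{k-1-i}\,\mu_j^{\,i+1}$, already matches the recursion in the form actually used in the proof of Proposition~\ref{prop:ordinary_sigmak} (with power sums $N_{i+1}=\sum_j\mu_j^{\,i+1}$), so no reindexing is required; the exponent $\mu_j^{\,i}$ in the displayed Definition~\ref{defn:defn1} appears to be a superscript slip for $\mu_j^{\,i+1}$, as one can also check directly from $\sigma_2^\infty(0;\mu)=\sigma_2(\mu)$. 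The reindexing you anticipate would in fact \emph{not} bring the two into agreement, so it is fortunate that it is unnecessary.
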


\begin{proof}

Combining the Maclaurin expansion for $e^{\mu_0t}$ with the formula
\[ \prod_{j=1}^n\left(1+\mu_jt\right) = \sum_{j=0}^n \sigma_j(\mu)t^j \]
yields the series expansion
\[ e^{\mu_0t}\prod_{j=1}^n\left(1+\mu_jt\right) = \sum_{j=0}^\infty\sum_{k=0}^j\frac{\mu_0^k}{k!}\sigma_{j-k}(\mu_0;\mu)t^j . \]
The desired result then follows immediately from~\eqref{eqn:ordinary_sigmak}.
\end{proof}

Using the generating function~\eqref{eqn:generating_function}, we can derive the weighted Newton inequalities for the weighted elementary symmetric polynomials by an argument analogous to the one usually given in the unweighted case (cf.\ \cite{HardyLittlewoodPolya1952}).

\begin{thm}
\label{thm:weighted_newton}
Given $k\geq1$ and $(\mu_0;\mu)\in\bR\times\bR^n$, it holds that
\begin{equation}
\label{eqn:weighted_newton}
\sigma_{k-1}^\infty(\mu_0;\mu)\sigma_{k+1}^\infty(\mu_0;\mu) \leq \frac{k}{k+1}\left(\sigma_k^\infty(\mu_0;\mu)\right)^2 .
\end{equation}
Moreover, equality holds in~\eqref{eqn:weighted_newton} if and only if $(\mu_0;\mu)$ satisfies
\begin{enumerate}
\item $\mu=0$, or
\item $\mu_0=0$ and, up to reindexing, it holds that $\mu_1=\dotsb=\mu_{n+1-k}=0$.
\end{enumerate}
\end{thm}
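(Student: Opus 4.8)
The plan is to route everything through the generating function of Proposition~\ref{prop:generating_function}. Write
\[ F(t) = e^{\mu_0t}\prod_{j=1}^n(1+\mu_jt), \]
an entire function whose Taylor coefficient of $t^j$ at the origin is $\sigma_j^\infty(\mu_0;\mu)$ and which, being a finite product of real linear factors times a real exponential, has only real zeros; so do all of its derivatives, by the classical fact that the Laguerre--P\'olya class is closed under differentiation (or, more elementarily, by applying Rolle's theorem to the polynomials $(1+\mu_0t/m)^m\prod_j(1+\mu_jt)$ and passing to the limit). The first step is a purely algebraic reduction: setting $H:=F^{(k-1)}$, one computes $H(0)=(k-1)!\,\sigma_{k-1}^\infty$, $H'(0)=k!\,\sigma_k^\infty$, $H''(0)=(k+1)!\,\sigma_{k+1}^\infty$, so that the identity $\tfrac{(k!)^2}{(k-1)!(k+1)!}=\tfrac{k}{k+1}$ shows that \eqref{eqn:weighted_newton} is equivalent to $H(0)H''(0)\le (H'(0))^2$, with equality in one precisely when equality holds in the other.

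Next I would write $H=e^{\mu_0t}R$ with $R:=(D+\mu_0)^{k-1}\bigl(\prod_{j=1}^n(1+\mu_jt)\bigr)$ a polynomial, where $D=d/dt$; since $H$ has only real zeros and $e^{\mu_0t}$ is zero-free, $R$ has only real zeros. A direct computation (e.g.\ with logarithmic derivatives) gives $H(0)H''(0)-(H'(0))^2 = R(0)R''(0)-(R'(0))^2$, the dependence on $\mu_0$ cancelling. So matters reduce to the elementary fact: \emph{for a real polynomial $R$ with only real zeros one has $R(0)R''(0)\le (R'(0))^2$, with equality if and only if $R$ is constant or $0$ is a zero of $R$ of multiplicity at least $2$.} When $R(0)\ne0$ this is seen by writing $R=R(0)\prod_i(1+\beta_it)$ with $\beta_i\in\bR\setminus\{0\}$, whence $R(0)R''(0)-(R'(0))^2=-R(0)^2\sum_i\beta_i^2$; when $R(0)=0$ it is immediate from $R(t)=t^\ell S(t)$. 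This establishes \eqref{eqn:weighted_newton} and reduces the equality statement to deciding when $R$ is constant or has a double zero at the origin.

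For the equality case I split on $\mu_0$. If $\mu=0$ then $\sigma_j^\infty(\mu_0;0)=\mu_0^j/j!$ and equality always holds, giving case~(1). If $\mu_0=0$ then $R=\bigl(\prod_j(1+\mu_jt)\bigr)^{(k-1)}$, which is constant exactly when $\prod_j(1+\mu_jt)$ has degree at most $k-1$, i.e.\ when at most $k-1$ of the $\mu_j$ are nonzero --- this is case~(2) --- while $0$ cannot be a double zero of $R$, since that would force $0$ to be a zero of $\prod_j(1+\mu_jt)$, which equals $1$ there. It remains to exclude equality when $\mu_0\ne0$ and $\mu\ne0$. In that case $R=(D+\mu_0)^{k-1}(\prod_j(1+\mu_jt))$ has positive degree $\#\{j:\mu_j\ne0\}$, hence is nonconstant, and I must show $0$ is not a double zero of $R$. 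For this I use the recursion $R_m=R_{m-1}'+\mu_0R_{m-1}$ with $R_0=\prod_j(1+\mu_jt)$ and $R_{k-1}=R$, together with two observations valid because each $R_m$ has only real zeros: if $t_0$ is a zero of $R_{m-1}$ of multiplicity $\ell\ge1$, then a direct computation shows it is a zero of $R_m$ of multiplicity exactly $\ell-1$; and if $R_{m-1}(t_0)\ne0$ while $t_0$ is a zero of $R_m$ of multiplicity at least $2$, then one finds $R_{m-1}(t_0)R_{m-1}''(t_0)=(R_{m-1}'(t_0))^2$, which by the elementary fact above (after translating $t_0$ to $0$) forces $R_{m-1}$ constant and hence $\mu_0=0$, a contradiction. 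Iterating downward, a double zero of $R_{k-1}$ at $0$ would force $0$ to be a zero of $R_0=\prod_j(1+\mu_jt)$ of multiplicity at least $k+1$, contradicting $R_0(0)=1$.

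I expect this last point --- excluding a double zero of $R_{k-1}$ at the origin when $\mu_0\ne0$ --- to be the main obstacle, since it is the only place where the real-rootedness of the intermediate polynomials $R_m$ is genuinely used and where the two cases $R_{m-1}(t_0)=0$ and $R_{m-1}(t_0)\ne0$ must be handled separately; everything else is bookkeeping around the generating function and the single elementary inequality for real-rooted polynomials.
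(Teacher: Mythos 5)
Your proof is correct, and it is a close cousin of the paper's argument: both route through the generating function $F(t)=e^{\mu_0t}\prod_j(1+\mu_jt)$ and the fact that all of its derivatives have only real zeros. What you do differently, and what is a genuine improvement in bookkeeping, is the explicit observation that $H(0)H''(0)-(H'(0))^2 = R(0)R''(0)-(R'(0))^2$ after writing $H=F^{(k-1)}=e^{\mu_0t}R$: the dependence on $\mu_0$ cancels exactly, which lets you reduce \emph{every} case to the single elementary fact $R(0)R''(0)\le (R'(0))^2$ for a real-rooted polynomial $R$. The paper does not make this cancellation explicit; instead it handles $\mu_0=0$ and $k=1$ as separate special cases, and for $k\ge2,\ \mu_0\ne0$ it splits on whether $0$ is a root of the polynomial factor $Q_{k-1}$ of $P^{(k-1)}$ --- when it is not a root, the paper renormalizes $Q_{k-1}$ by its value at $0$, reads off new parameters $\mu_i^{(k-1)}=-1/r_i^{(k-1)}$, and reduces to the already-established $k=1$ case, which amounts to the same quadratic inequality you use but re-expressed through $\sigma_1^\infty,\sigma_2^\infty$. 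Your equality analysis is also organized differently: you track the multiplicity of $0$ backwards along the chain $R_m=R_{m-1}'+\mu_0R_{m-1}$, using your two observations to show a double zero at $0$ would propagate back to a zero of $R_0$ (or produce a contradiction with $\mu_0\ne0$), whereas the paper instead uses the interlacing of roots of $Q_j$ to conclude $0$ is at most a simple zero of $Q_{k-1}$ and treats the two subcases separately. Both are correct; your presentation is more uniform and the exponential cancellation is a clean simplification, at the cost of a slightly more intricate iteration in the equality case (where, as you note, you must handle $R_{m-1}(t_0)=0$ and $R_{m-1}(t_0)\ne0$ separately). One small point worth tightening in a written-up version: in the $\mu_0=0$ equality case your remark that a double zero of $R$ at $0$ ``would force $0$ to be a zero of $\prod_j(1+\mu_jt)$'' silently skips the branch where some intermediate derivative becomes constant (so $R\equiv0$); this does no harm since $R\equiv0$ is covered by the ``$R$ constant'' alternative, but it deserves a sentence.
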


\begin{proof}

To begin, set $p_k^\infty=k!\sigma_k^\infty$, so that~\eqref{eqn:weighted_newton} is equivalently written
\begin{equation}
\label{eqn:weighted_newton_p}
p_{k-1}^\infty(\mu_0;\mu) p_{k+1}^\infty(\mu_0;\mu) \leq \left(p_k^\infty(\mu_0;\mu)\right)^2 .
\end{equation}

First, consider the case $\mu_0=0$.  Then $\sigma_k^\infty(\mu_0;\mu)=\sigma_k(\mu)$ and the result follows immediately from the usual Newton inequalities~\cite{HardyLittlewoodPolya1952}.

Next, consider next the case $k=1$.  A straightforward computation gives
\[ \left(p_1^\infty(\mu_0;\mu)\right)^2 - p_2^\infty(\mu_0;\mu) = \sum_{j=1}^n\mu_j^2, \]
from which the result immediately follows.

Finally, suppose $k\geq2$ and $\mu_0\not=0$.  By Proposition~\ref{prop:generating_function} we have that
\begin{equation}
\label{eqn:polynomial}
P(t) := e^{\mu_0t}\prod_{j=1}^n\left(1+\mu_jt\right) = \sum_{j=0}^\infty p_j^\infty(\mu_0;\mu)\frac{t^j}{j!} .
\end{equation}
Set $\ell=\#\{j\in\{1,\dotsc,n\}\colon \mu_j=0\}$.  Up to reindexing $\mu_1,\dotsc,\mu_n$, we may write $P(t)=e^{\mu_0t}Q_0(t)$ for $Q_0(t)$ a polynomial of degree $n-\ell$ with roots $r_j^{(0)}=-\mu_j^{-1}\not=0$ such that $r_1^{(0)}\leq\dotsb\leq r_{n-\ell}^{(0)}$.  Differentiating the right-hand side of~\eqref{eqn:polynomial} gives
\begin{equation}
\label{eqn:dk_polynomial}
\frac{d^{k-1}}{dt^{k-1}}P(t) = \sum_{j=0}^\infty p_{k+j-1}^\infty(\mu_0;\mu)\frac{t^j}{j!} .
\end{equation}
On the other hand, since $\mu_0\not=0$, repeated application of Rolle's Theorem implies that for $j\in\{1,\dotsc,k-1\}$ we may write
\begin{equation}
\label{eqn:rolle_polynomial}
\frac{d^j}{dt^j}P(t) = e^{\mu_0t}Q_j(t)
\end{equation}
for $Q_j(t)$ a polynomial of degree $n-\ell$ with roots $r_1^{(j)}\leq\dotsb\leq r_{n-\ell}^{(j)}$ satisfying
\begin{enumerate}
\item $r_1^{(j)}\leq r_1^{(j-1)} \leq r_2^{(j)} \leq \dotsb \leq r_{n-\ell}^{(j-1)}$ if $\mu_0>0$, and
\item $r_1^{(j-1)}\leq r_1^{(j)} \leq r_2^{(j-1)} \leq \dotsb \leq r_{n-\ell}^{(j)}$ if $\mu_0<0$ .
\end{enumerate}
In particular, for each $j\in\{1,\dotsc,k-1\}$ there can be at most one $i\in\{1,\dotsc,n-\ell\}$ such that $r_i^{(j)}=0$.  We consider these cases separately.

\textit{Case 1: There exists an $i\in\{1,\dotsc,n-\ell\}$ such that $r_i^{(k-1)}=0$}.  Thus $0$ is a zero of order one of $Q_{k-1}(t)$.  Comparing~\eqref{eqn:dk_polynomial} and~\eqref{eqn:rolle_polynomial} thus gives $p_{k-1}^\infty(\mu_0;\mu)=0$ but $p_k^\infty(\mu_0;\mu)\not=0$, and hence~\eqref{eqn:weighted_newton_p} holds with strict inequality.

\textit{Case 2: $r_i^{(k-1)}\not=0$ for all $i\in\{1,\dotsc,n-\ell\}$}.  Set $\mu_i^{(k-1)}=-1/r_i^{(k-1)}$ for $i\in\{1,\dotsc,n-\ell\}$ and $\mu_s^{(k-1)}=0$ for $s\in\{n-\ell+1,\dotsc,n\}$.  Since $0$ is not a root of $Q_{k-1}(t)$, we have that $p_{k-1}^\infty(\mu_0;\mu)\not=0$, and hence~\eqref{eqn:dk_polynomial} and~\eqref{eqn:rolle_polynomial} together imply that
\[ e^{\mu_0t}\prod_{j=1}^n\left(1+\mu_j^{(k-1)}t\right) = \sum_{j=0}^\infty\frac{p_{k+j-1}^\infty(\mu_0;\mu)}{p_{k-1}^\infty(\mu_0;\mu)}\frac{t^j}{j!} . \]
In particular, $\tilde p_j^\infty:=p_j^\infty(\mu_0;\mu_1^{(k-1)},\dotsc,\mu_n^{(k-1)})=\frac{p_{k+j-1}^\infty(\mu_0;\mu_1,\dotsc,\mu_n)}{p_{k-1}^\infty(\mu_0;\mu_1,\dotsc,\mu_n)}$.  We have already seen that $\left(\tilde p_1^\infty\right)^2\geq\tilde p_2^\infty$ with equality if and only if $\mu_1^{(k-1)}=\dotsb=\mu_n^{(k-1)}=0$, from which the result immediately follows.
\end{proof}

Like the Newton inequalities, the weighted Newton inequalities can be used to prove many interesting and useful inequalities.  For us, the most important inequality is~\eqref{eqn:newton_cor} below, which is valid only in the negative weighted elliptic $k$-cones.

\begin{defn}
Let $k\in\bN$.  The \emph{negative weighted elliptic $k$-cone $\Gamma_k^{\infty,-}$} is the set
\[ \Gamma_k^{\infty,-} = \left\{ (\mu_0;\mu)\in\bR\times\bR^n \colon (-1)^j\sigma_j^\infty(\mu_0;\mu)>0 \text{ for all $j\in\{1,\dotsc,k\}$} \right\} . \]
\end{defn}

By reason of our intended applications, we only consider the negative weighted elliptic $k$-cones.  However, one could also study the positive weighted elliptic $k$-cones, defined by requiring all intermediate weighted elementary symmetric polynomials to be positive, and in this case all of the results of this section still hold, up to some changes of signs (cf.\ \cite{CaffarelliNirenbergSpruck1985,Viaclovsky2000}).

Using the negative weighted elliptic $k$-cones we can establish the following analogue of~\cite[Lemma~23]{Viaclovsky2000}.

\begin{cor}
\label{cor:newton}
Let $k\in\bN$ and let $(\mu_0;\mu)\in\Gamma_k^{\infty,-}$.  Then
\begin{equation}
\label{eqn:newton_cor}
(-1)^{k+1}(k+1)\sigma_{k+1}^\infty(\mu_0;\mu) \leq (-1)^{k+1}\sigma_1^\infty(\mu_0;\mu)\sigma_k^\infty(\mu_0;\mu)
\end{equation}
and moreover, equality holds if and only if $\mu=0$.
\end{cor}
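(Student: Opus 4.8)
The plan is to induct on $k$, feeding the weighted Newton inequalities of Theorem~\ref{thm:weighted_newton} into the inductive step. Throughout it is convenient to write $q_j := (-1)^j\sigma_j^\infty(\mu_0;\mu)$, so that the hypothesis $(\mu_0;\mu)\in\Gamma_k^{\infty,-}$ says exactly that $q_0=1$ and $q_j>0$ for $1\le j\le k$. Unwinding signs, the inequality~\eqref{eqn:newton_cor} to be proved reads
\[
(k+1)(-1)^{k+1}\sigma_{k+1}^\infty(\mu_0;\mu) \;\le\; q_1 q_k ,
\]
since $(-1)^{k+1}\sigma_1^\infty\sigma_k^\infty=q_1q_k$ (use $\sigma_1^\infty=-q_1$ and $\sigma_k^\infty=(-1)^kq_k$). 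Note the left-hand side is $(k+1)(-1)^{k+1}\sigma_{k+1}^\infty$, not $(k+1)q_{k+1}$, because the sign of $\sigma_{k+1}^\infty$ is not controlled in $\Gamma_k^{\infty,-}$.

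For the base case $k=1$, the computation in the proof of Theorem~\ref{thm:weighted_newton} gives $(\sigma_1^\infty)^2-2\sigma_2^\infty=\sum_{j=1}^n\mu_j^2\ge 0$, which, since $q_1^2=(\sigma_1^\infty)^2$, is exactly the displayed inequality for $k=1$, with equality if and only if $\mu=0$. For the inductive step, fix $k\ge 2$ and assume the corollary for $k-1$. Since imposing the conditions defining $\Gamma_k^{\infty,-}$ is stronger than imposing those defining $\Gamma_{k-1}^{\infty,-}$, we have $\Gamma_k^{\infty,-}\subseteq\Gamma_{k-1}^{\infty,-}$, so the inductive hypothesis applies; rewritten in the $q_j$-notation it reads $k q_k\le q_1 q_{k-1}$, with equality if and only if $\mu=0$. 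On the other hand, Theorem~\ref{thm:weighted_newton} at index $k$ gives $\sigma_{k-1}^\infty\sigma_{k+1}^\infty\le\tfrac{k}{k+1}(\sigma_k^\infty)^2$; substituting $\sigma_{k-1}^\infty=(-1)^{k-1}q_{k-1}$, dividing by the positive number $q_{k-1}$, and multiplying by $k+1$ converts this into
\[
(k+1)(-1)^{k+1}\sigma_{k+1}^\infty \;\le\; \frac{k q_k^2}{q_{k-1}} .
\]
Chaining with the inductive hypothesis, $\dfrac{k q_k^2}{q_{k-1}}=\dfrac{k q_k}{q_{k-1}}\,q_k\le q_1 q_k$, which is the desired bound. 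For the equality statement: if equality holds in~\eqref{eqn:newton_cor} then it holds in the last step $k q_k\le q_1 q_{k-1}$ (after multiplying by $q_k/q_{k-1}>0$), whence $\mu=0$ by the inductive hypothesis; conversely, when $\mu=0$ equation~\eqref{eqn:ordinary_sigmak} gives $\sigma_j^\infty(\mu_0;0)=\mu_0^j/j!$, so both sides of~\eqref{eqn:newton_cor} are equal.

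The argument is short because all the analytic content is already contained in Theorem~\ref{thm:weighted_newton}; the step requiring the most care is essentially bookkeeping, namely tracking the factors $(-1)^j$ when passing between $\sigma_j^\infty$ and $q_j$ and, in particular, observing that the direction of the Newton inequality is preserved because one divides by the positive quantity $q_{k-1}$ rather than by $\sigma_{k-1}^\infty$ itself. A secondary point to verify is that the nesting $\Gamma_k^{\infty,-}\subseteq\Gamma_{k-1}^{\infty,-}$ (and hence $q_{k-1}>0$ for $k\ge 2$) is exactly what allows the induction to close, which is why the base case $k=1$ — where no such $q_{k-1}$ is available — must be handled directly from the explicit formula for $(\sigma_1^\infty)^2-2\sigma_2^\infty$.
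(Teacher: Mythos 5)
Your proof is correct and follows essentially the same route as the paper's: induct on $k$, feeding the weighted Newton inequality of Theorem~\ref{thm:weighted_newton} into the inductive hypothesis and dividing by the strictly-signed $(k-1)$-st term. The paper works with $p_k^\infty=k!\sigma_k^\infty$ while you work with $q_k=(-1)^k\sigma_k^\infty$, but this is purely notational, and your equality analysis (pinning equality down to the second link of the chain and invoking the inductive hypothesis) matches the paper's.
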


\begin{proof}

The proof is by induction.  Using the notation $p_k^\infty$ used in the proof of Theorem~\ref{thm:weighted_newton}, we see that the claim~\eqref{eqn:newton_cor} is equivalent to
\begin{equation}
\label{eqn:newton_cor_p}
(-1)^{k+1}p_{k+1}^\infty(\mu_0;\mu) \leq (-1)^{k+1}p_1^\infty(\mu_0;\mu) p_k^\infty(\mu_0;\mu) .
\end{equation}
It is clear from Theorem~\ref{thm:weighted_newton} that~\eqref{eqn:newton_cor} and the characterization of equality hold when $k=1$.  Suppose then that we know that
\[ (-1)^jp_j^\infty(\mu_0;\mu)\leq (-1)^j p_1^\infty(\mu_0;\mu) p_{j-1}^\infty(\mu_0;\mu) \]
for some $j\in\{2,\dotsc,k\}$ and that equality holds if and only if $\mu=0$.  Since $(\mu_0;\mu)\in\Gamma_k^{\infty,-}$, the inductive hypothesis and~\eqref{eqn:weighted_newton_p} together imply that
\[ p_{j-1}^\infty(\mu_0;\mu) p_{j+1}^\infty(\mu_0;\mu) \leq \left(p_j^\infty(\mu_0;\mu)\right)^2 \leq p_1^\infty(\mu_0;\mu) p_{j-1}^\infty(\mu_0;\mu) p_j^\infty(\mu_0;\mu) . \]
Since $(-1)^{j+1}p_{j-1}^\infty(\mu_0;\mu)>0$ by assumption,
\[ (-1)^{j+1}p_{j+1}^\infty(\mu_0;\mu) \leq (-1)^{j+1}p_1^\infty(\mu_0;\mu) p_j^\infty(\mu_0;\mu) \]
holds.  Moreover, if equality holds in the above display, then, since $(-1)^jp_j^\infty(\mu_0;\mu)>0$, it must also hold in the inductive step, and hence $\mu=0$.
\end{proof}

\subsection{Weighted Newton transformations}
\label{subsec:polynomial/transformation}

We can also consider weighted elementary symmetric polynomials acting on pairs of a real number and a symmetric matrix.  Since symmetric matrices are diagonalizable and the weighted elementary symmetric polynomials are invariant under permutations of the $\bR^n$-factor, these are naturally defined in terms of the eigenvalues of the matrix.

\begin{defn}
Let $P$ be a symmetric $n\times n$ real-valued matrix and let $\mu_0\in\bR$.  The \emph{$k$-th weighted elementary symmetric function $\sigma_k^\infty(\mu_0;P)$ of $P$ and $\mu_0$} is
\[ \sigma_k^\infty(\mu_0;P) = \sigma_k^\infty(\mu_0;\mu) \]
for $\mu=(\mu_1,\dotsc,\mu_n)$ the eigenvalues of $P$.
\end{defn}

We can also discuss the negative weighted elliptic cones in this setting, which we shall do with an abuse of notation.

\begin{defn}
\label{defn:endomorphism_elliptic_cone}
Let $k\in\bN_0$.  The \emph{negative weighted elliptic $k$-cone $\Gamma_k^{\infty,-}$} is the set
\[ \Gamma_k^{\infty,-} := \left\{ (\mu_0;P)\in \bR\times\Sym(\bR^n) \colon (\mu_0;\mu)\in \Gamma_k^{\infty,-} \right\} , \]
where $\Sym(\bR^n)$ is the space of symmetric $n\times n$ real-valued matrices and $\mu=(\mu_1,\dotsc,\mu_n)$ are the eigenvalues of $P$.
\end{defn}

As a generalization of the Newton transformations, for each $k\in\bN_0$ one can construct a new matrix $T_k^\infty$ which is intimately related to the weighted elementary symmetric function $\sigma_k^\infty$.

\begin{defn}
Let $P$ be a symmetric $n\times n$ real-valued matrix and let $\mu_0\in\bR$.  The \emph{$k$-th weighted Newton transformation $T_k^\infty(\mu_0;P)$ of $P$ and $\mu_0$} is
\[ T_k^\infty(\mu_0;P) = \sum_{j=0}^k (-1)^j\sigma_{k-j}^\infty(\mu_0;P)\,P^j . \]
\end{defn}

When the symmetric matrix $P$ and the scalar $\mu_0$ are clear from context, we shall simply use the symbols $\sigma_k^\infty$ and $T_k^\infty$ to denote the $k$-th weighted elementary symmetric function $\sigma_k^\infty(\mu_0;P)$ and the $k$-th weighted Newton transformation $T_k^\infty(\mu_0;P)$, respectively.

As we shall see in the remainder of this article, the weighted Newton transformations appear in many important ways when studying the weighted elementary symmetric functions.  For now, we only wish to point out that the definition of the weighted elementary symmetric polynomials can be rewritten in terms of the weighted Newton transformations in the following way.

\begin{lem}
\label{lem:contr_newton}
Let $P$ be a symmetric $n\times n$ real-valued matrix and let $\mu_0\in\bR$.  Then for all $k\in\bN_0$ it holds that
\[ \lp T_k^\infty, P\rp = (k+1)\sigma_{k+1}^\infty - \mu_0\sigma_k^\infty , \]
for $\lp A,B\rp := \tr AB$.
\end{lem}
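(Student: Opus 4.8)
The plan is to compute $\lp T_k^\infty, P\rp = \tr(T_k^\infty P)$ directly from the definition $T_k^\infty = \sum_{j=0}^k (-1)^j \sigma_{k-j}^\infty P^j$, expressing the answer in terms of the power sums $N_i := \tr(P^i) = \sum_{\ell=1}^n \mu_\ell^i$, and then recognize the resulting combination via the recursion in Definition~\ref{defn:defn1}. Concretely, $\tr(T_k^\infty P) = \sum_{j=0}^k (-1)^j \sigma_{k-j}^\infty\,\tr(P^{j+1}) = \sum_{j=0}^k (-1)^j \sigma_{k-j}^\infty N_{j+1}$. After reindexing with $i = j$, this is $\sum_{i=0}^{k}(-1)^i \sigma_{k-i}^\infty N_{i+1}$, whose $i=0$ term is $\sigma_k^\infty N_1 = \sigma_k^\infty \sum_{\ell=1}^n \mu_\ell$ and whose remaining terms are $\sum_{i=1}^{k}(-1)^i \sigma_{k-i}^\infty N_{i+1}$.

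Next I would bring in the defining recursion. Definition~\ref{defn:defn1} states $(k+1)\sigma_{k+1}^\infty(\mu_0;\mu) = \sigma_k^\infty(\mu_0;\mu)\bigl(\mu_0 + \sum_{j=1}^n \mu_j\bigr) + \sum_{i=1}^{k}(-1)^i \sigma_{k-i}^\infty(\mu_0;\mu)\,N_{i+1}$, after shifting the index $k \mapsto k+1$ in the stated formula (there $k\sigma_k^\infty = \sigma_{k-1}^\infty\sum_{j=0}^n\mu_j + \sum_{i=1}^{k-1}(-1)^i\sigma_{k-1-i}^\infty N_i$, and $\sum_{j=0}^n\mu_j = \mu_0 + N_1$). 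Solving for the sum over $i$ gives $\sum_{i=1}^{k}(-1)^i \sigma_{k-i}^\infty N_{i+1} = (k+1)\sigma_{k+1}^\infty - \sigma_k^\infty\mu_0 - \sigma_k^\infty N_1$. Substituting this into the expression from the first paragraph, the two $\sigma_k^\infty N_1$ terms cancel and we are left with $\lp T_k^\infty, P\rp = (k+1)\sigma_{k+1}^\infty - \mu_0 \sigma_k^\infty$, as claimed.

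One should double-check the index bookkeeping and the base case $k=0$: there $T_0^\infty = \sigma_0^\infty\,\Id = \Id$, so $\lp T_0^\infty, P\rp = \tr P = N_1$, while the right side is $1\cdot\sigma_1^\infty - \mu_0\sigma_0^\infty = \sigma_1^\infty - \mu_0 = N_1$ by~\eqref{eqn:ordinary_sigmak} (or directly from the recursion with $k=1$), so the formula holds. The only mild subtlety is the harmless discrepancy between the upper limit "$k-1$" in the displayed recursion of Definition~\ref{defn:defn1} and "$k$" after the shift $k\mapsto k+1$; I would state the shifted recursion explicitly to avoid confusion. The argument is otherwise purely formal manipulation of finite sums, so there is no real obstacle—the main point is simply matching the trace expansion to the recursion, and I expect the bulk of the write-up to be verifying that the ranges of summation line up correctly.
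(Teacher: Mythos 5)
Your proof is correct and takes the same route the paper's terse one-line proof intends: expand $\tr(T_k^\infty P)$ using $\lp P^j,P\rp = \tr P^{j+1} = N_{j+1}$, split off the $j=0$ term, and identify the remaining sum via the defining recursion of $\sigma_{k+1}^\infty$.

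One bookkeeping point is worth fixing in the write-up. In your parenthetical you quote the unshifted recursion with $N_i$ in the inner sum, yet your shifted version carries $N_{i+1}$; a bare substitution $k\mapsto k+1$ does not alter the $N$-subscript, so those two displays are inconsistent as stated. The resolution is that Definition~\ref{defn:defn1} as printed contains a typo---the inner power should be $\mu_j^{i+1}$, not $\mu_j^i$---as confirmed by the paper's own restatement of the recursion at the start of the proof of Proposition~\ref{prop:ordinary_sigmak}, which in the case $\mu_1=0$ reads
\[
k\sigma_k^\infty(\mu_0;\mu) = \mu_0\,\sigma_{k-1}^\infty(\mu_0;\mu) + \sum_{j=0}^{k-1}(-1)^j\sigma_{k-1-j}^\infty(\mu_0;\mu)\,N_{j+1}.
\]
Your shifted recursion with $N_{i+1}$ matches this corrected form, and it is the only version for which the lemma holds; if one uses the printed exponent $\mu_j^i$ literally, the resulting identity is not $(k+1)\sigma_{k+1}^\infty - \mu_0\sigma_k^\infty$. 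So your algebra lands in the right place, but you should state explicitly that you are invoking the corrected recursion rather than claiming it arises from a pure index shift of the printed display. With that clarification the argument is complete and coincides with the paper's.
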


\begin{proof}

This follows immediately from Definition~\ref{defn:defn1} and the fact
\[ \lp P^j, P\rp = \tr P^{j+1} = \sum_{i=1}^n \mu_i^{j+1} \]
for $(\mu_1,\dotsc,\mu_n)$ the eigenvalues of $P$.
\end{proof}

\subsection{The elliptic cones}
\label{subsec:polynomial/cones}

As in the Riemannian setting, the main reason for restricting our attention to the elliptic cones $\Gamma_k^{\infty,-}$ is that the problem of prescribing the weighted $\sigma_k$-curvature is elliptic in these cones; see~\cite{CaffarelliNirenbergSpruck1985,Viaclovsky2000} for the Riemannian statement and Proposition~\ref{prop:elliptic_cone} for the weighted statement.  The main point is that the symbol of the weighted $\sigma_k$-curvatures is given by the $(k-1)$-th Newton tensor.

In this subsection we establish the main algebraic ingredient behind this fact, namely that if $(\mu_0;\mu)\in\Gamma_k^{\infty,-}$, then for any symmetric matrix $P$ with eigenvalues given by the coordinates of $\mu$, the corresponding weighted Newton transformation $T_{k-1}^\infty(\mu_0;P)$ is positive (resp.\ negative) definite when $k$ is odd (resp.\ even).  We shall prove this directly, rather than arguing via hyperbolic polynomials as in~\cite{CaffarelliNirenbergSpruck1985}.  To that end, we require two lemmas.  First, we have the following relationship between the weighted elementary symmetric polynomials when removing one of the variables.

\begin{lem}
\label{lem:remove_one}
Let $(\mu_0;\mu)\in\bR\times\bR^n$ and fix $i\in\{1,2,\dotsc,n\}$.  Set
\[ \omu(i)=(\mu_1,\mu_2,\dotsc,\mu_{i-1},\mu_{i+1},\mu_{i+2},\dotsc,\mu_n); \]
i.e.\ $\omu$ is $\mu$ with the $i$-th coordinate removed.  Then for all $k\in\bN$ it holds that
\begin{equation}
\label{eqn:remove_one}
\sigma_k^\infty\left(\mu_0;\mu\right) = \sigma_k^\infty\left(\mu_0;\omu(i)\right) + \mu_i\sigma_{k-1}^\infty\left(\mu_0;\omu(i)\right) .
\end{equation}
\end{lem}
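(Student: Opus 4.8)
The plan is to mimic the familiar identity $\sigma_k(\mu)=\sigma_k(\omu(i))+\mu_i\sigma_{k-1}(\omu(i))$ for ordinary elementary symmetric polynomials, but carried out at the level of generating functions, where the weighted case behaves almost exactly like the unweighted one. By Proposition~\ref{prop:generating_function}, writing $\omu=\omu(i)$ for brevity, we have
\[
\sum_{k=0}^\infty \sigma_k^\infty(\mu_0;\omu)\,t^k = e^{\mu_0 t}\prod_{\substack{j=1\\ j\neq i}}^n (1+\mu_j t),
\qquad
\sum_{k=0}^\infty \sigma_k^\infty(\mu_0;\mu)\,t^k = e^{\mu_0 t}\prod_{j=1}^n (1+\mu_j t).
\]
Multiplying the first generating function by the single factor $(1+\mu_i t)$ recovers the second, so
\[
\sum_{k=0}^\infty \sigma_k^\infty(\mu_0;\mu)\,t^k
= (1+\mu_i t)\sum_{k=0}^\infty \sigma_k^\infty(\mu_0;\omu)\,t^k
= \sum_{k=0}^\infty \sigma_k^\infty(\mu_0;\omu)\,t^k + \mu_i\sum_{k=0}^\infty \sigma_k^\infty(\mu_0;\omu)\,t^{k+1}.
\]
Comparing coefficients of $t^k$ on both sides yields $\sigma_k^\infty(\mu_0;\mu)=\sigma_k^\infty(\mu_0;\omu)+\mu_i\sigma_{k-1}^\infty(\mu_0;\omu)$, which is~\eqref{eqn:remove_one}. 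Since the identity~\eqref{eqn:generating_function} is an equality of formal power series in $t$ (each coefficient being a fixed polynomial in the finitely many variables $\mu_0,\dotsc,\mu_n$), the manipulation is legitimate term by term and no convergence issues arise.

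An alternative, should one prefer to avoid generating functions, is a direct strong induction on $k$ using the recursion in Definition~\ref{defn:defn1}: separate the $j=i$ term from the power sums $N_m=\sum_{j=1}^n \mu_j^m$ appearing in the recursion, write $N_m = \oN_m + \mu_i^m$ where $\oN_m$ is the corresponding sum over $\omu(i)$, expand $\mu_i^m$ geometrically, and collect terms. This reproduces the recursion for $\sigma_k^\infty(\mu_0;\omu)$ plus a $\mu_i$-multiple of the recursion for $\sigma_{k-1}^\infty(\mu_0;\omu)$; one then invokes the inductive hypothesis. This is the kind of bookkeeping carried out in the proof of Proposition~\ref{prop:ordinary_sigmak}.

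I expect no genuine obstacle here; the only mild subtlety is making sure the base case and the index shift in the $\mu_i t$ factor are handled correctly (in particular $\sigma_0^\infty(\mu_0;\mu)=\sigma_0^\infty(\mu_0;\omu)=1$ and $\sigma_{-1}^\infty:=0$), after which coefficient comparison finishes the argument immediately. The generating-function route is the cleanest and is the one I would write up.
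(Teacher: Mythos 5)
Your proof is correct, and it takes a slightly different but closely related route from the paper's.  The paper reduces~\eqref{eqn:remove_one} to the classical unweighted identity $\sigma_k(\mu)=\sigma_k(\omu(i))+\mu_i\sigma_{k-1}(\omu(i))$ and then transfers it to the weighted case via the perturbation expansion of Proposition~\ref{prop:ordinary_sigmak}, i.e.\ $\sigma_k^\infty(\mu_0;\mu)=\sum_{j=0}^k\frac{\mu_0^j}{j!}\sigma_{k-j}(\mu)$; applying the unweighted identity termwise and reassembling gives the result.  You instead work directly with the generating function of Proposition~\ref{prop:generating_function} and pull out the single factor $(1+\mu_i t)$.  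Both routes are legitimate since Propositions~\ref{prop:ordinary_sigmak} and~\ref{prop:generating_function} are both available at this point in the paper, and in fact Proposition~\ref{prop:generating_function} was itself derived from Proposition~\ref{prop:ordinary_sigmak}, so the two arguments are morally equivalent.  The generating-function argument is arguably more transparent and self-contained (it avoids invoking the unweighted removal identity as a known fact); the paper's argument is shorter on the page and emphasizes the viewpoint that the weighted invariants are perturbations of the unweighted ones.  Your remark about the base case and the convention $\sigma_{-1}^\infty:=0$ is the right thing to say to make the coefficient comparison airtight; no gap.
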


\begin{proof}

It is readily checked that $\sigma_k(\mu)=\sigma_k\left(\omu(i)\right)+\mu_i\sigma_k\left(\omu(i)\right)$.  The result is then an immediate consequence of Proposition~\ref{prop:ordinary_sigmak}.
\end{proof}

Second, we have the following identification of the eigenvalues of the weighted Newton transformations.

\begin{lem}
\label{lem:newton_eigenvalue}
Let $P$ be an $n\times n$ symmetric matrix with eigenvalues $\mu_1,\mu_2,\dotsc,\mu_n$ and let $\mu_0\in\bR$.  Then for any $i\in\{1,2,\dotsc,n\}$ and any $k\in\bN_0$, the $i$-th eigenvalue of the $k$-th weighted Newton transformation $T_k^\infty(\mu_0;P)$ is equal to $\sigma_k^\infty\left(\mu_0;\omu(i)\right)$.
\end{lem}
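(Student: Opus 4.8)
The plan is to reduce the statement about the matrix $T_k^\infty(\mu_0;P)$ to a scalar identity among the weighted elementary symmetric polynomials, and then to prove that identity using Proposition~\ref{prop:generating_function} (or, alternatively, Lemma~\ref{lem:remove_one}).

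First I would observe that $T_k^\infty(\mu_0;P)=\sum_{j=0}^k(-1)^j\sigma_{k-j}^\infty(\mu_0;P)P^j$ is a polynomial in $P$ whose coefficients $\sigma_{k-j}^\infty(\mu_0;P)$ depend only on the eigenvalues $\mu=(\mu_1,\dotsc,\mu_n)$ of $P$. Hence, if $v_1,\dotsc,v_n$ is any orthonormal eigenbasis of $P$ with $Pv_i=\mu_iv_i$, then each $v_i$ is automatically an eigenvector of $T_k^\infty(\mu_0;P)$, with eigenvalue
\[ \lambda_i^{(k)} := \sum_{j=0}^k(-1)^j\sigma_{k-j}^\infty(\mu_0;\mu)\,\mu_i^j . \]
This makes precise the phrase ``the $i$-th eigenvalue of $T_k^\infty(\mu_0;P)$'' and simultaneously disposes of any concern about repeated eigenvalues. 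Thus the lemma is equivalent to the scalar identity $\lambda_i^{(k)}=\sigma_k^\infty(\mu_0;\omu(i))$.

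To prove this identity I would argue with generating functions. Working with formal power series in $t$ and applying Proposition~\ref{prop:generating_function} twice,
\begin{align*}
\sum_{k=0}^\infty \lambda_i^{(k)}t^k
&= \left(\sum_{j=0}^\infty\sigma_j^\infty(\mu_0;\mu)t^j\right)\left(\sum_{\ell=0}^\infty(-\mu_it)^\ell\right)
= \frac{e^{\mu_0t}\prod_{j=1}^n(1+\mu_jt)}{1+\mu_it} \\
&= e^{\mu_0t}\prod_{j\neq i}(1+\mu_jt) = \sum_{k=0}^\infty\sigma_k^\infty(\mu_0;\omu(i))t^k ,
\end{align*}
where the first equality is the Cauchy product, whose $t^k$-coefficient is exactly $\lambda_i^{(k)}$. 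Comparing coefficients of $t^k$ gives the claim. Equivalently, one may prove it by induction on $k$: the base case $k=0$ reads $\lambda_i^{(0)}=\sigma_0^\infty=1=\sigma_0^\infty(\mu_0;\omu(i))$, and splitting off the $j=0$ term and reindexing yields the recursion $\lambda_i^{(k)}=\sigma_k^\infty(\mu_0;\mu)-\mu_i\lambda_i^{(k-1)}$, at which point the inductive hypothesis combined with Lemma~\ref{lem:remove_one} closes the step.

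I do not anticipate a genuine obstacle here; the argument is essentially formal. The only point that deserves a moment's care is the first step: noting that $T_k^\infty(\mu_0;P)$, being a polynomial in $P$, is simultaneously diagonalized with $P$, so that ``the $i$-th eigenvalue'' is unambiguous and is given by $\lambda_i^{(k)}$. If one prefers the generating-function route, it is cleanest to keep the manipulation at the level of formal power series so that no convergence issue arises, although in fact all series involved are entire in $t$ once the denominator $1+\mu_it$ is cleared.
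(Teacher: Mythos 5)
Your proof is correct, and your main route differs from the paper's. The paper argues by the same induction you sketch only at the end of your proposal: it uses the scalar recursion $T_k^\infty = \sigma_k^\infty I - T_{k-1}^\infty P$, reads off that the $i$-th eigenvalue of $T_k^\infty$ is $\sigma_k^\infty(\mu_0;\mu) - \mu_i\sigma_{k-1}^\infty(\mu_0;\omu(i))$, and closes via Lemma~\ref{lem:remove_one}. Your primary argument instead works with the generating function from Proposition~\ref{prop:generating_function}: you identify $\sum_k \lambda_i^{(k)}t^k$ as the Cauchy product of $e^{\mu_0 t}\prod_j(1+\mu_j t)$ with the geometric series $1/(1+\mu_i t)$, cancel the factor $(1+\mu_i t)$, and then read off $\sigma_k^\infty(\mu_0;\omu(i))$ from the reduced product. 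This route has the virtue of proving Lemma~\ref{lem:remove_one} and the eigenvalue statement simultaneously (Lemma~\ref{lem:remove_one} is precisely the coefficient-comparison statement after clearing $(1+\mu_i t)$), so it makes it transparent that both facts come from the same multiplicative structure; the paper's inductive route is more elementary and self-contained but requires Lemma~\ref{lem:remove_one} as a separate input. Your opening remark — that $T_k^\infty(\mu_0;P)$ is a polynomial in $P$ and so is simultaneously diagonalized with $P$, which makes ``the $i$-th eigenvalue'' unambiguous — is correct and a useful clarification that the paper leaves implicit.
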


\begin{proof}

The proof is by induction.  Clearly the result is true if $k=0$.  Suppose then that the $i$-th eigenvalue of $T_{k-1}^\infty(\mu_0;P)$ is equal to $\sigma_{k-1}^\infty\left(\omu(i)\right)$.  Since $T_k^\infty=\sigma_k^\infty I - T_{k-1}^\infty P$, the $i$-th eigenvalue of $T_k^\infty$ is $\sigma_k^\infty(\mu_0;\mu)-\mu_i\sigma_{k-1}^\infty\left(\mu_0;\omu(i)\right)$.  The result is then an immediate consequence of Lemma~\ref{lem:remove_one}.
\end{proof}

Using these lemmas we can establish the aforementioned fact about the definiteness of the weighted Newton transformations associated to pairs $(\mu_0;P)$ in the negative weighted elliptic cones.

\begin{prop}
\label{prop:ellipticity}
Let $(\mu_0;P)\in\Gamma_{k+1}^{\infty,-}$.  Then $(-1)^kT_k^\infty(\mu_0;P)>0$.
\end{prop}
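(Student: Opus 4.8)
The plan is to reduce the claim, via Lemma~\ref{lem:newton_eigenvalue}, to a purely algebraic statement about the weighted elementary symmetric polynomials of one fewer variable, and then to prove that statement by an induction driven by Lemma~\ref{lem:remove_one} and the weighted Newton inequality (Theorem~\ref{thm:weighted_newton}). Write $\mu=(\mu_1,\dotsc,\mu_n)$ for the eigenvalues of $P$, so that $(\mu_0;\mu)\in\Gamma_{k+1}^{\infty,-}$. By Lemma~\ref{lem:newton_eigenvalue} the eigenvalues of $T_k^\infty(\mu_0;P)$ are exactly the numbers $\sigma_k^\infty(\mu_0;\omu(i))$, $i=1,\dotsc,n$, so it suffices to prove $(-1)^k\sigma_k^\infty(\mu_0;\omu(i))>0$ for each fixed $i$; equivalently, I will show $(\mu_0;\omu(i))\in\Gamma_k^{\infty,-}$ as an $(n-1)$-tuple, i.e.\ that deleting one coordinate drops one level in the cone hierarchy.

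Fix $i$ and abbreviate $a_j:=\sigma_j^\infty(\mu_0;\omu(i))$ and $b_j:=\sigma_j^\infty(\mu_0;\mu)$. I claim $(-1)^j a_j>0$ for $j=0,1,\dotsc,k$, which I prove by induction on $j$; the case $j=k$ is the proposition. The base case is $a_0=1>0$. For the inductive step, fix $1\le j\le k$ and assume $(-1)^{j'}a_{j'}>0$ for $0\le j'\le j-1$; in particular $a_{j-1}\neq0$. By Lemma~\ref{lem:remove_one}, $b_j=a_j+\mu_i a_{j-1}$ and $b_{j+1}=a_{j+1}+\mu_i a_j$. Solving the first for $\mu_i$, substituting into the second, and multiplying through by $a_{j-1}$ eliminates $\mu_i$ and yields
\[ b_{j+1}\,a_{j-1}=a_{j+1}a_{j-1}+b_j a_j-a_j^2 . \]
Applying the weighted Newton inequality~\eqref{eqn:weighted_newton} to the $(n-1)$-tuple $\omu(i)$ with index $j$ gives $a_{j+1}a_{j-1}\le\frac{j}{j+1}a_j^2$, hence
\[ b_{j+1}\,a_{j-1}\le b_j a_j-\frac{1}{j+1}a_j^2 . \]
Since $(\mu_0;\mu)\in\Gamma_{k+1}^{\infty,-}$ and $j+1\le k+1$, we have $(-1)^{j+1}b_{j+1}>0$; together with $(-1)^{j-1}a_{j-1}>0$ this forces $b_{j+1}a_{j-1}>0$. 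Therefore $b_j a_j>\frac{1}{j+1}a_j^2\ge0$, which forces $a_j\neq0$ and $\sgn(a_j)=\sgn(b_j)=(-1)^j$, i.e.\ $(-1)^j a_j>0$. This closes the induction, and then $(-1)^kT_k^\infty(\mu_0;P)>0$ since all its eigenvalues $(-1)^k a_i$ are positive.

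The only step that is more than bookkeeping is the elimination producing the $\mu_i$-free relation among $a_{j-1},a_j,a_{j+1},b_j,b_{j+1}$, together with the observation that the weighted Newton inequality is exactly the ingredient needed to upgrade the known sign of $b_{j+1}$ (coming from membership in $\Gamma_{k+1}^{\infty,-}$) to the desired sign of $a_j$; everything else is tracking signs of powers of $-1$. One should also confirm the argument survives the degenerate case $n=1$, where $\omu(i)$ is the empty tuple and $\sigma_j^\infty(\mu_0;-)$ reduces to $\mu_0^j/j!$, but this is covered uniformly by the same computation.
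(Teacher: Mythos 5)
Your proof is correct and follows essentially the same strategy as the paper: reduce via Lemma~\ref{lem:newton_eigenvalue} to a sign claim about $\sigma_j^\infty(\mu_0;\omu(i))$, then run a strong induction in which Lemma~\ref{lem:remove_one} and the weighted Newton inequality (Theorem~\ref{thm:weighted_newton}) combine to show $\sigma_j^\infty(\mu_0;\omu(i))\,\sigma_j^\infty(\mu_0;\mu)>0$, whence the cone condition fixes the sign. The only cosmetic difference is bookkeeping: the paper's step proves $(-1)^{j+1}\sigma_{j+1}^\infty(\mu_0;\omu(i))>0$ by multiplying $\sigma_{j+2}^\infty(\mu_0;\mu)=a_{j+2}+\mu_i a_{j+1}$ by the known-sign $a_j$ and then factoring out $a_{j+1}$, whereas you eliminate $\mu_i$ explicitly between two instances of Lemma~\ref{lem:remove_one} before applying Newton; the inequality $a_jb_j>0$ that results is the same in both.
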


\begin{proof}

Let $\mu\in\bR^n$ denote the eigenvalues of $P$.  By Lemma~\ref{lem:newton_eigenvalue}, it suffices to show that $(-1)^k\sigma_k^\infty\left(\mu_0;\omu(i)\right)>0$ for all $i\in\{1,2,\dotsc,n\}$.  We do so by strong induction.  Clearly $\sigma_0^\infty\left(\mu_0;\omu(i)\right)>0$.  Suppose that $(-1)^\ell\sigma_\ell^\infty\left(\mu_0;\omu(i)\right)>0$ for $\ell\in\{0,1,\dotsc,j\}$ and some $j\leq k-1$.  From Lemma~\ref{lem:remove_one} and the assumption $(\mu_0;\mu)\in\Gamma_{k+1}^{\infty,-}$ we conclude that
\[ (-1)^{j}\left[\sigma_{j+2}^\infty\left(\mu_0;\omu(i)\right) + \mu_i\sigma_{j+1}^\infty\left(\mu_0;\omu(i)\right)\right] > 0 . \]
Using the inductive hypothesis, Theorem~\ref{thm:weighted_newton}, and Lemma~\ref{lem:remove_one}, we see that
\begin{align*}
0 & < \sigma_{j}^\infty\left(\mu_0;\omu(i)\right)\sigma_{j+2}^\infty\left(\mu_0;\omu(i)\right) + \mu_i\sigma_{j}^\infty\left(\mu_0;\omu(i)\right)\sigma_{j+1}^\infty\left(\mu_0;\omu(i)\right) \\
& \leq \sigma_{j+1}^\infty\left(\mu_0;\omu(i)\right)\left[ \sigma_{j+1}^\infty\left(\mu_0;\omu(i)\right) + \mu_i\sigma_{j}^\infty\left(\mu_0;\omu(i)\right)\right] \\
& = \sigma_{j+1}^\infty\left(\mu_0;\omu(i)\right)\sigma_{j+1}^\infty\left(\mu_0;\mu\right) .
\end{align*}
Since $(\mu_0;\mu)\in\Gamma_{k+1}^{\infty,-}$ we see that $(-1)^{j+1}\sigma_{j+1}^\infty\left(\mu_0;\omu(i)\right)>0$, as desired.
\end{proof}
\section{Manifolds with density}
\label{sec:smms}

In this section we collect some basic facts and definitions about manifolds with density which are needed in this article.  A \emph{manifold with density} is a triple $(M^n,g,e^{-\phi}\dvol)$ of a Riemannian manifold together with measure $e^{-\phi}\dvol$ determined by a function $\phi\in C^\infty(M)$ and the Riemannian volume element $\dvol$ of $g$.  These are sometimes also referred to as smooth metric measure spaces, but we have opted to use the terminology ``manifold with density'' so as to avoid confusion with the more general notion of a smooth metric measure space used in~\cite{Case2014s} which incorporates also a notion of the ``dimension'' of the measure.

The basic objects of study on manifolds with density are the \emph{weighted Laplacian} $\Delta_\phi$ and the \emph{Bakry-\'Emery Ricci tensor} $\Ric_\phi$.  The weighted Laplacian is defined by
\[ \Delta_\phi u = \Delta u - \lp\nabla u,\nabla\phi\rp \]
for all $u\in C^2(M)$, where $\Delta$ is the Laplacian with respect to $g$.  Equivalently, the weighted Laplacian is the Euler--Lagrange operator for the Dirichlet energy $\frac{1}{2}\int\lv\nabla u\rv^2e^{-\phi}\dvol$.  The Bakry-\'Emery Ricci tensor is defined by
\[ \Ric_\phi := \Ric + \nabla^2\phi , \]
where $\Ric$ is the Ricci curvature of $g$ and $\nabla^2\phi$ is the Hessian of $\phi$.  Equivalently, the Bakry-\'Emery Ricci tensor is the curvature term in the weighted Bochner formula, which states that
\[ \frac{1}{2}\Delta_\phi\lv\nabla u\rv^2 = \lv\nabla^2 u\rv^2 + \lp\nabla\Delta_\phi u,\nabla u\rp + \Ric_\phi(\nabla u,\nabla u) \]
for all $u\in C^\infty(M)$.  These objects form the basis for comparison theory on manifolds with density; see, for example, \cite{Wei_Wylie} and references therein.

In his study of the Ricci flow, Perelman~\cite{Perelman1} introduced the notion of the \emph{weighted scalar curvature} $R_\phi$ (see also~\cite{Lott2003}) as the scalar function
\[ R_\phi := R + 2\Delta\phi - \lv\nabla\phi\rv^2 . \]
While the weighted scalar curvature is not the trace of the Bakry-\'Emery Ricci tensor, it has two important properties which justify its name.  First, it is related to the Bakry-\'Emery Ricci tensor via the Bianchi identity $\delta_\phi\Ric_\phi=\frac{1}{2}dR_\phi$, where $\delta_\phi=\delta-\iota_{\nabla\phi}$ is the weighted divergence, so called because it is the (negative of the) formal adjoint of the Levi-Civita connection on one-forms when taken with respect to the measure $e^{-\phi}\dvol$.  Second, the weighted scalar curvature is the curvature term in the Weitzenb\"ock formula for the Dirac operator on spinors when all formal adjoints are taken with respect to $e^{-\phi}\dvol$.

In fact, Perelman's study of the Ricci flow suggests that there is another way one might define the Bakry-\'Emery Ricci tensor and the weighted scalar curvature.  Given a manifold with density $(M^n,g,e^{-\phi}\dvol)$ and a parameter $\lambda\in\bR$, we define the \emph{modified Bakry-\'Emery Ricci tensor} $\cRic_\phi$ and the \emph{modified weighted scalar curvature} $\cR_\phi$ by
\begin{align*}
\cRic_\phi & = \Ric + \nabla^2\phi - \lambda g, \\
\cR_\phi & = R + 2\Delta_\phi - \lv\nabla\phi\rv^2 + 2\lambda(\phi - n) .
\end{align*}
It is readily computed that we again have the Bianchi-type identity $\delta_\phi\cRic_\phi = \frac{1}{2}d\cR_\phi$.

The main point of these definitions is that they provide a concise way of discussing Perelman's $\mW$-functional and gradient Ricci solitons.  We say that $(M^n,g)$ is a \emph{gradient Ricci soliton} if there is a function $\phi\in C^\infty(M)$ and a constant $\lambda\in\bR$ such that the manifold with density $(M^n,g,e^{-\phi}\dvol)$ with parameter $\lambda$ satisfies $\cRic_\phi=0$; we also call $(M^n,g,e^{-\phi}\dvol)$ a gradient Ricci soliton if $\cRic_\phi=0$ for some parameter $\lambda\in\bR$.  It is an immediate consequence of the above Bianchi-type identity that $\cR_\phi$ is constant for gradient Ricci solitons.  Perelman's $\mW$-functional $\mW\colon\Met(M)\times C^\infty(M)\times\bR_+\to\bR$ is defined on a compact manifold $M^n$ by
\[ \mW(g,\phi,\tau) = \int_M \tau\cR_\phi\,(4\pi\tau)^{-\frac{n}{2}}e^{-\phi}\dvol_g , \]
where $\Met(M)$ is the space of Riemannian metrics on $M$ and $\cR_\phi$ is the modified weighted scalar curvature of the manifold with density $(M^n,g,e^{-\phi}\dvol)$ with parameter $\lambda=\frac{1}{2\tau}$.  Critical points of $\mW$ restricted to the class
\begin{equation}
\label{eqn:measure_1}
\mC_1 := \left\{ (g,\phi,\tau) \in \Met(M)\times C^\infty(M)\times\bR_+ \colon \int_M (4\pi\tau)^{-\frac{n}{2}}e^{-\phi}\dvol_g = 1 \right\}
\end{equation}
of volume-normalized pairs of metrics and measures-with-scale are shrinking gradient Ricci solitons.  For $g$ fixed, critical points of $\mW$ restricted to the class
\begin{equation}
\label{eqn:conf_1}
\mC_1(g) = \left\{ (\phi,\tau)\in C^\infty(M)\times\bR_+ \colon (g,\phi,\tau)\in\mC_1 \right\}
\end{equation}
of volume-normalized measures-with-scale have constant modified weighted scalar curvature.  Regarding the $\mW$-functional as the weighted analogue of the total scalar curvature functional provides a strong analogy between variations of the measure $e^{-\phi}\dvol$ and the scale $\tau$ --- equivalently, variations of $(\phi,\tau)$ --- and variations of a conformal metric in connection to the problems of finding critical points for the $\mW$-functional in $\mC_1(g)$ and the Yamabe Problem, respectively.  A more precise description of this analogy was developed in~\cite{Case2013y} through the language of smooth metric measure spaces.

We pursue in a strong way the idea that the proper analogue of the notion of a pointwise conformal change of metric for manifolds with density is a pointwise change of the measure $e^{-\phi}\dvol$.  Thus we define a manifold with density $(M^n,g,e^{-\phi}\dvol)$ to be \emph{locally conformally flat in the weighted sense} if $(M^n,g)$ is flat; i.e.\ if the Riemann curvature tensor vanishes identically.  Phrased differently, the only way that $(M^n,g,e^{-\phi}\dvol)$ is locally equivalent to the flat space $(\bR^n,dx^2,\dvol)$ up to changes of measure is if $(M^n,g)$ is locally isometric to flat Euclidean space.  An alternative motivation for this definition which passes through the language of smooth metric measure spaces can be found in~\cite{Case2011t}.

The article~\cite{Case2011t} provides two additional definitions we require.  Given a manifold with density $(M^n,g,e^{-\phi}\dvol)$ and a parameter $\lambda\in\bR$, the weighted analogue of the Cotton tensor is $d\cRic_\phi$; i.e.\ the tensor defined by
\[ d\cRic_\phi\left(X,Y,Z\right) := \nabla_X\cRic_\phi(Y,Z) - \nabla_Y\cRic_\phi(X,Z) \]
for all $p\in M$ and all $X,Y,Z\in T_pM$.  A more na\"ive way to see that $d\cRic_\phi$ is the weighted analogue of the Cotton tensor is as follows: just like the divergence of the Weyl tensor is (a multiple of) the Cotton tensor, it is readily computed that the weighted divergence of the Riemann curvature tensor $\Rm$ is
\[ \delta_\phi\Rm = d\cRic_\phi . \]
In particular, if $(M^n,g)$ is a flat manifold, then for any $\phi\in C^\infty(M)$ the manifold with density $(M^n,g,e^{-\phi}\dvol)$ is such that $d\cRic_\phi=0$ for all parameters $\lambda\in\bR$.  The \emph{weighted Bach tensor} $\cB_\phi$ is defined by
\begin{equation}
\label{eqn:weighted_bach}
\cB_\phi := \delta_\phi d\cRic_\phi + \Rm\cdot\cRic_\phi,
\end{equation}
where
\begin{align*}
\left(\delta_\phi d\cRic_\phi\right)(X,Y) & := \sum_{j=1}^n \nabla_{E_i}d\cRic_\phi(E_i,X,Y) - d\cRic_\phi(\nabla\phi,X,Y) , \\
\left(\Rm\cdot\cRic_\phi\right)(X,Y) & := \sum_{j=1}^n \Rm(E_i,X,E_j,Y)\cRic_\phi(E_i,E_j),
\end{align*}
for all $p\in M$ and all $X,Y\in T_pM$, where $\{E_i\}\subset T_pM$ is an orthonormal basis and our sign convention is such that $\Rm\cdot g = \Ric$.  The simple explanation for this definition is that the weighted Bach tensor~\eqref{eqn:weighted_bach} is defined in terms of the weighted Cotton tensor $d\cRic_\phi$ and the weighted Weyl tensor $\Rm$ in the analogous way that the Bach tensor is usually defined.  A more meaningful explanation for this definition is that the weighted Bach tensor appears in the first variation of the total weighted $\sigma_2$-curvature functional in exactly the analogous way as in the Riemannian case; see Proposition~\ref{prop:first_variation2} for details.

The positively-curved flat models of manifolds with density are the \emph{shrinking Gaussians}
\[ \left( \bR^n, dx^2, \exp\left(-\frac{\lv x-x_0\rv^2}{4\tau}\right)\dvol \right) \]
for any constant $\tau>0$ and any fixed point $x_0\in\bR^n$.  It is readily computed that with the parameter $\lambda=\frac{1}{2\tau}$ both the modified Bakry-\'Emery Ricci tensor and the modified weighted scalar curvature vanish identically, and moreover, for all $\tau>0$ we have that
\[ \int_{\bR^n} (4\pi\tau)^{-\frac{n}{2}}e^{-\frac{\lv x-x_0\rv^2}{4\tau}}\dvol = 1 . \]
These normalizations play an important role in motivating the definitions of the weighted $\sigma_k$-curvatures and the associated total weighted $\sigma_k$-curvature functionals given in Section~\ref{sec:defn}.
\section{The Weighted $\sigma_k$-curvature}
\label{sec:defn}

We now define the weighted $\sigma_k$-curvatures and associated objects.  In particular, we define the $k$-th weighted Newton tensors and natural trace-adjustments thereof which lead to conservation laws for the weighted $\sigma_k$-curvatures (see Section~\ref{sec:first_variation} and Section~\ref{sec:divergence}).  We also define the total weighted $\sigma_k$-curvature functionals, which should be regarded as the fully nonlinear analogues of Perelman's $\mW$-functional (see Subsection~\ref{subsec:defn/mW} and Section~\ref{sec:first_variation}).

There are two intuitions behind our definition of the weighted $\sigma_k$-curvatures.  The first is that they are in some sense the infinite-dimensional limits of the usual $\sigma_k$-curvatures, as can be obtained by considering warped products with spaceforms of dimension tending to infinity; this is the intuition partially suggested in Section~\ref{sec:polynomial} and developed more fully in~\cite{Case2014s}.  The second is that the weighted $\sigma_k$-curvature functionals should be defined in terms of the $k$-th weighted elementary symmetric polynomials in such a way that they can be regarded as perturbations of the $k$-th elementary symmetric polynomial of the Bakry-\'Emery Ricci tensor of a manifold with density $(M^n,g,e^{-\phi}\dvol)$.  To that end, observe that for any parameter $\lambda$, the modified weighted scalar curvature is
\[ \frac{1}{2}\cR_\phi = \tr\cRic_\phi + \cY_\phi \]
for
\begin{equation}
\label{eqn:cY}
\cY_\phi := -\frac{1}{2}\left(R + \lv\nabla\phi\rv^2 - 2\lambda\phi\right) .
\end{equation}
In particular, we see that $\cY_\phi$ depends only on derivatives up to order one of $\phi$, so it becomes natural to consider the weighted $\sigma_1$-curvature to be $\frac{1}{2}\cR_\phi$.  This leads to the following definition.

\begin{defn}
\label{defn:sigmak}
Given $k\in\bN_0$, the \emph{weighted $\sigma_k$-curvature} $\csigma_{k,\phi}$ of a manifold with density $(M^n,g,e^{-\phi}\dvol)$ with parameter $\lambda$ is
\[ \csigma_{k,\phi} := \sigma_k^\infty\left(\cY_\phi;\cRic_\phi\right) \]
for $\sigma_k^\infty$ the $k$-th weighted elementary symmetric polynomial, $\cY_\phi$ the scalar function~\eqref{eqn:cY}, and $\cRic_\phi$ the modified Bakry-\'Emery Ricci tensor.
\end{defn}

We denote by $\sigma_{k,\phi}$ the weighted $\sigma_k$-curvature of $(M^n,g,e^{-\phi}\dvol)$ with parameter $\lambda=0$.  We have opted not to use the terminology ``modified weighted $\sigma_k$-curvature'' because we find it too cumbersome.  Instead, we mark the difference between ``unmodified'' and ``modified'' weighted $\sigma_k$-curvatures by omitting or including, respectively, the tilde in our notation.  Similar conventions are used for the weighted $k$-th Newton tensor $\cT_{k,\phi}$ and its trace-adjusted version $\cE_{k,\phi}$ below.

The weighted $k$-th Newton tensor is the weighted $k$-th Newton transformation of $\cRic_\phi$ and $\cY_\phi$.  Writing it only in terms of the modified Bakry-\'Emery Ricci tensor and the weighted $\sigma_k$-curvatures produces the following definition.

\begin{defn}
\label{defn:Tk}
Given $k\in\bN_0$, the \emph{weighted $k$-th Newton tensor} $\cT_{k,\phi}$ of a manifold with density $(M^n,g,e^{-\phi}\dvol)$ with parameter $\lambda$ is
\[ \cT_{k,\phi} = \sum_{j=0}^k (-1)^j\csigma_{k-j,\phi}\cRic_\phi^j \]
where $\cRic_\phi^j$ denotes the $j$-fold composition of $\cRic_\phi$, regarded as an endomorphism of $TM$, with itself.
\end{defn}

In conformal geometry, one is often more interested in the tracefree part of the $k$-th Newton tensors (cf.\ \cite{ChangGurskyYang2003b}).  The analogous tensor in the weighted setting is obtained by subtracting the metric term from $\cT_{k,\phi}$.

\begin{defn}
\label{defn:Ek}
Let $(M^n,g,e^{-\phi}\dvol)$ be a manifold with density, let $\lambda\in\bR$, and let $k\in\bN$.  The \emph{trace-adjusted weighted $k$-Newton tensor $\cE_{k,\phi}$} is defined by
\begin{equation}
\label{eqn:Ek}
\cE_{k,\phi} := \cT_{k,\phi} - \csigma_{k,\phi}g .
\end{equation}
\end{defn}

As in Riemannian geometry, the PDE prescribing the weighted $\sigma_k$-curvature is not always elliptic (cf.\ \cite{Viaclovsky2000}).  There are natural cones in which this PDE is elliptic.

\begin{defn}
\label{defn:weighted_elliptic_cone}
Let $(M^n,g)$ be a Riemannian manifold and let $k\in\bN$.  The \emph{negative weighted elliptic $k$-cone $\Gamma_k^{\infty,-}$} is the set
\[ \Gamma_k^{\infty,-} := \left\{ (\phi,\lambda) \in C^\infty(M)\times\bR \colon \left(\cY(p),\cRic_\phi\rv_p\right)\in\Gamma_k^{\infty,-} \text{ for all $p\in M$} \right\} , \]
where we define $\cY$ and $\cRic_\phi$ in terms of the manifold with density $(M^n,g,e^{-\phi}\dvol)$ and the parameter $\lambda$ and we interpret the statement $\left(\cY(p),\cRic\rv_p\right)\in\Gamma_k^{\infty,-}$ using Definition~\ref{defn:endomorphism_elliptic_cone}.

The manifold with density $(M^n,g,e^{-\phi}\dvol)$ with parameter $\lambda\in\bR$ \emph{lies in the negative weighted elliptic $k$-cone} if $(\phi,\lambda)\in\Gamma_k^{\infty,-}$.
\end{defn}

Note that, from the definition of the negative weighted elliptic cones given in Section~\ref{sec:polynomial}, we have the sequence of inclusions
\[ \Gamma_1^{\infty,-} \supset \Gamma_2^{\infty,-} \supset \Gamma_3^{\infty,-} \supset \dotsb . \]

We have chosen to focus on the \emph{negative} weighted elliptic $k$-cones, rather than the \emph{positive} weighted elliptic $k$-cones, because with our definitions shrinking gradient Ricci solitons are always in $\Gamma_k^{\infty,-}$ for all $k\in\bN$.  More precisely, Perelman~\cite{Perelman1} showed that the $\nu$-entropy of any compact manifold is always strictly negative, and hence, by Theorem~\ref{thm:obata/1}, $\csigma_{1,\phi}<0$ for any compact gradient Ricci soliton; it is trivial to conclude from this that compact gradient Ricci solitons lie in $\Gamma_k^{\infty,-}$ for all $k\in\bN$ (cf.\ Lemma~\ref{lem:modified_sigmak_grs}).  For a given compact gradient Ricci soliton, one could modify our definition of $\cY_\phi$ by adding a constant $c$ so that $\csigma_{1,\phi}^{(c)}>0$ for $\csigma_{1,\phi}^{(c)}$ as in Remark~\ref{rk:divergence}, and hence study gradient Ricci solitons as elements of the positive weighted elliptic $k$-cones.  However, there does not seem to be a uniform choice of $c$ which works for \emph{every} compact gradient Ricci soliton, so this approach does not seem natural.  Nevertheless, in solving the weighted $\sigma_k$-curvature prescription problem one also needs to impose some sort of positivity as, for example, minimizers of Perelman's $\mW$-functional do not exist when $\nu(g)=-\infty$, though certainly there exist many elements of the negative weighted elliptic $1$-cone.

As stated above, the point of restricting to the negative elliptic cones is that the class $\Gamma_k^{\infty,-}$ gives a natural set of functions and parameters for which the weighted $\sigma_k$-curvature prescription problem is elliptic.

\begin{prop}
\label{prop:elliptic_cone}
Let $(M^n,g)$ be a Riemannian manifold and fix $k\in\bN$ and $\lambda\in\bR$.  The operator
\begin{equation}
\label{eqn:sigmak_pde}
\phi \mapsto \csigma_{k,\phi}
\end{equation}
is elliptic in the cone $\left\{\phi\colon (\phi,\lambda)\in\Gamma_k^{\infty,-}\right\}$.
\end{prop}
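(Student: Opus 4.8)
The plan is to reduce ellipticity of~\eqref{eqn:sigmak_pde} to the linear-algebraic positivity already established in Proposition~\ref{prop:ellipticity}. The operator~\eqref{eqn:sigmak_pde} is a second-order scalar operator which for $k\ge2$ is fully nonlinear (indeed $\csigma_{k,\phi}$ is a polynomial of degree $k$ in the Hessian $\nabla^2\phi$), so by ``elliptic'' we mean that at each $\phi$ in the indicated cone the linearization of~\eqref{eqn:sigmak_pde} is an elliptic linear operator; equivalently, the symmetric matrix $\partial\csigma_{k,\phi}/\partial(\nabla^2\phi)$ is definite, or equivalently the principal symbol $\xi\mapsto\sum_{i,j}\bigl(\partial\csigma_{k,\phi}/\partial(\nabla^2\phi)_{ij}\bigr)\xi_i\xi_j$ of the linearization is nonvanishing for $\xi\ne0$.

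First I would isolate where the Hessian of $\phi$ enters $\csigma_{k,\phi}=\sigma_k^\infty(\cY_\phi;\cRic_\phi)$. Since $\cRic_\phi=\Ric+\nabla^2\phi-\lambda g$ while $\cY_\phi$ depends only on the $1$-jet of $\phi$ by~\eqref{eqn:cY}, the Hessian appears only through the symmetric-matrix argument of $\sigma_k^\infty$, so
\[ \frac{\partial\csigma_{k,\phi}}{\partial(\nabla^2\phi)_{ij}} = \left.\frac{\partial}{\partial P_{ij}}\,\sigma_k^\infty(\mu_0;P)\right|_{\mu_0=\cY_\phi,\ P=\cRic_\phi} . \]
The next step is to identify this derivative as the $(k-1)$-st weighted Newton transformation, i.e.\ to prove the weighted analogue
\[ \frac{\partial}{\partial P}\,\sigma_k^\infty(\mu_0;P) = T_{k-1}^\infty(\mu_0;P) \]
of the classical fact $\partial\sigma_k/\partial P=T_{k-1}$ (cf.\ the Riemannian statement in~\cite{Viaclovsky2000}). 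I would do this via generating functions: from the classical identity $\det(I+tP)(I+tP)^{-1}=\sum_{m\ge0}T_m(P)\,t^m$ together with the expansion~\eqref{eqn:ordinary_sigmak} one obtains the matrix identity $\sum_{m\ge0}T_m^\infty(\mu_0;P)\,t^m=e^{\mu_0t}\det(I+tP)(I+tP)^{-1}$; differentiating the scalar generating function of Proposition~\ref{prop:generating_function} in $P$ and matching powers of $t$ then yields $\lp\partial_P\sigma_k^\infty(\mu_0;P),H\rp=\lp T_{k-1}^\infty(\mu_0;P),H\rp$ for every symmetric $H$. (Alternatively one can argue by induction directly from Definition~\ref{defn:defn1}.) Combining the last two displays, the principal symbol of the linearization of~\eqref{eqn:sigmak_pde} at $\phi$ is the quadratic form $\xi\mapsto\cT_{k-1,\phi}(\xi^\sharp,\xi^\sharp)$, where $\cT_{k-1,\phi}=T_{k-1}^\infty(\cY_\phi;\cRic_\phi)$ is the weighted $(k-1)$-st Newton tensor and $\xi^\sharp$ is the metric dual of $\xi$.

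Finally I would apply Proposition~\ref{prop:ellipticity} with $k$ replaced by $k-1$. If $(\phi,\lambda)\in\Gamma_k^{\infty,-}$ then, by Definition~\ref{defn:weighted_elliptic_cone}, $(\cY_\phi(p);\cRic_\phi|_p)\in\Gamma_k^{\infty,-}=\Gamma_{(k-1)+1}^{\infty,-}$ for every $p\in M$, so Proposition~\ref{prop:ellipticity} gives $(-1)^{k-1}\cT_{k-1,\phi}|_p>0$ as an endomorphism of $T_pM$. Hence the principal symbol $\xi\mapsto\cT_{k-1,\phi}(\xi^\sharp,\xi^\sharp)$ is definite of sign $(-1)^{k-1}$, in particular nonvanishing on nonzero covectors, so~\eqref{eqn:sigmak_pde} is elliptic throughout the cone $\{\phi:(\phi,\lambda)\in\Gamma_k^{\infty,-}\}$. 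The main obstacle is the middle step, the identity $\partial_P\sigma_k^\infty=T_{k-1}^\infty$: it is the exact weighted counterpart of a standard fact, but because the recursion of Definition~\ref{defn:defn1} is somewhat unwieldy it is cleanest to pass through the generating function of Proposition~\ref{prop:generating_function} rather than compute by hand; everything else is either unwinding definitions or a direct appeal to the algebraic results of Section~\ref{sec:polynomial}.
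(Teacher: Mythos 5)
Your argument is correct and reaches the conclusion by the same overall route as the paper: identify the principal symbol of the linearization of $\phi\mapsto\csigma_{k,\phi}$ as the quadratic form determined by $\cT_{k-1,\phi}$, and then invoke Proposition~\ref{prop:ellipticity} (with $k$ replaced by $k-1$) to see that $(-1)^{k-1}\cT_{k-1,\phi}>0$ in the cone. Where you and the paper genuinely differ is in how the symbol is identified. The paper's proof simply cites Proposition~\ref{prop:diff}, whose formula $\cD_{k,\phi}(\psi)=\lp\cT_{k-1,\phi},\nabla^2\psi\rp+\lots$ exhibits the symbol directly; note that this is a forward reference into Section~\ref{sec:variational_status}, where Proposition~\ref{prop:diff} is proved by strong induction on the recursion of Definition~\ref{defn:defn1}. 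You instead prove the purely algebraic identity $\partial_P\sigma_k^\infty(\mu_0;P)=T_{k-1}^\infty(\mu_0;P)$ and then specialize $P=\cRic_\phi$, $\mu_0=\cY_\phi$. Your generating-function derivation checks out: from Definition~2.8 one has
\[
\sum_{m\ge0}T_m^\infty(\mu_0;P)\,t^m=\Bigl(\sum_{j\ge0}\sigma_j^\infty(\mu_0;P)\,t^j\Bigr)(I+tP)^{-1}=e^{\mu_0t}\det(I+tP)(I+tP)^{-1},
\]
using Proposition~\ref{prop:generating_function}, and differentiating the scalar generating function $e^{\mu_0t}\det(I+tP)$ in $P$ via Jacobi's formula produces an extra factor of $t(I+tP)^{-1}$, so matching coefficients of $t^k$ gives $\partial_P\sigma_k^\infty=T_{k-1}^\infty$. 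The two approaches buy different things. Yours is self-contained within Section~\ref{sec:polynomial}, avoids the forward reference, and isolates exactly the top-order information needed for ellipticity. The paper's appeal to Proposition~\ref{prop:diff} does more work than necessary here, but that full linearization formula (including lower-order terms) is required elsewhere --- notably for the divergence form in Corollary~\ref{cor:Dk} and for the variational characterization in Theorem~\ref{thm:variational_status} --- so the cost of proving it once is amortized. For the ellipticity statement alone, your argument is the more economical one.
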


\begin{proof}

As an immediate consequence of Proposition~\ref{prop:diff}, we see that the linearization of the operator~\eqref{eqn:sigmak_pde} is $\cT_{k-1,\phi}$.  Proposition~\ref{prop:ellipticity} implies that $\cT_{k-1,\phi}$ is definite; i.e.\ \eqref{eqn:sigmak_pde} is elliptic.
\end{proof}

\subsection{The total weighted $\sigma_k$-curvature functionals}
\label{subsec:defn/mW}

In the cases when the weighted $\sigma_k$-curvature is variational (see Theorem~\ref{thm:variational_status}), it is natural to expect that critical points of the total weighted $\sigma_k$-curvature functionals for a fixed metric have constant weighted $\sigma_k$-curvature (cf.\ \cite{BransonGover2008}).  It turns out that this is not the case, but it is true up to lower order terms; see the discussion at the end of Section~\ref{sec:variational_status} for details.  Nevertheless, we expect the total weighted $\sigma_k$-curvature functionals to be of interest, and so we shall endow them with a special notation.

\begin{defn}
\label{defn:mW}
Let $M^n$ be a compact manifold and let $k\in\bN$.  The $\mW_k$-functional $\mW_k\colon\Met(M)\times C^\infty(M)\times\bR_+\to\bR$ is defined by
\begin{equation}
\label{eqn:mW}
\mW_k(g,\phi,\tau) = \int_M \tau^k\csigma_{k,\phi} (4\pi\tau)^{-\frac{n}{2}} e^{-\phi}\dvol_g ,
\end{equation}
where $\csigma_{k,\phi}$ is the weighted $\sigma_k$-curvature of $(M^n,g,e^{-\phi}\dvol)$ with parameter $\lambda=\frac{1}{2\tau}$.
\end{defn}

We are primarily interested in the special cases $k\in\{1,2\}$, as it is only in these cases that the weighted $\sigma_k$-curvature is variational on all Riemannian manifolds; see Theorem~\ref{thm:variational_status}.  For these reasons, we say some more words about these cases in particular.

It is readily computed that the $\mW_1$-functional is equivalently written
\[ \mW_1(g,\phi,\tau) = \frac{1}{2}\int_M \left[ \tau\left( R + \lv\nabla\phi\rv^2\right) + \phi - n \right] (4\pi\tau)^{-\frac{n}{2}}\dvol , \]
so that $\mW_1$ is one half of Perelman's $\mW$-functional.  It is also readily computed that the $\mW_2$-functional is equivalently written
\begin{align*}
\mW_2(g,\phi,\tau) & = \int_M \bigg[ \tau^2\sigma_{2,\phi} + \frac{\tau}{2}\left(\phi-n+2\right)\sigma_{1,\phi} \\
& \qquad\qquad + \frac{1}{8}\left(\phi^2-2n\phi+n(n-1)\right)\bigg](4\pi\tau)^{-\frac{n}{2}}e^{-\phi}\dvol ,
\end{align*}
with further equivalent formulations available by using the definition of the (unmodified) weighted $\sigma_1$- and $\sigma_2$-curvatures and integration by parts.  A key point of this definition is that shrinking gradient Ricci solitons are critical points for the $\mW_2$-functional restricted to $\mC_1(g)$; see Proposition~\ref{prop:first_variation2}.

The reason for the particular power of $\tau$ in the definition of the $\mW_k$-functional is to ensure that $\mW_k$ satisfies the same scale invariance as Perelman's $\mW$-functional.  More precisely, a simple computation, which we omit, verifies the following lemma.

\begin{lem}
\label{lem:mW_scale_invariance}
Let $(M^n,g)$ be a compact Riemannian manifold.  It holds for all $\phi\in C^\infty(M)$ and all $c,\tau>0$ that
\begin{equation}
\label{eqn:mW_scale_invariance}
\mW_k(cg,\phi,\tau) = \mW_k(g,\phi,c^{-1}\tau) .
\end{equation}
\end{lem}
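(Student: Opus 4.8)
The plan is to track the homogeneity of every ingredient of $\mW_k$ under the rescaling $g \mapsto cg$, bearing in mind that the parameter in Definition~\ref{defn:sigmak} is tied to $\tau$ through $\lambda = \frac{1}{2\tau}$. First I would record the behaviour under a constant conformal change: since the Levi--Civita connection and the Riemann curvature tensor are unchanged, $\Ric$ and $\nabla^2\phi$ are invariant as $(0,2)$-tensors, while every metric contraction acquires a factor $c^{-1}$; in particular $R_{cg} = c^{-1}R_g$, $\lv\nabla\phi\rv^2_{cg} = c^{-1}\lv\nabla\phi\rv^2_g$, and $\dvol_{cg} = c^{n/2}\dvol_g$. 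Writing $\csigma_{k,\phi}[h,s]$ for the weighted $\sigma_k$-curvature of $(M^n,h,e^{-\phi}\dvol_h)$ with parameter $\frac{1}{2s}$, one then checks that the $(0,2)$-tensor $\cRic_\phi = \Ric + \nabla^2\phi - \tfrac{1}{2\tau}(cg)$ belonging to $(cg,\tau)$ coincides with the modified Bakry--\'Emery Ricci tensor belonging to $(g, c^{-1}\tau)$, whereas the scalar $\cY_\phi$ of~\eqref{eqn:cY} for $(cg,\tau)$ equals $c^{-1}$ times the corresponding $\cY_\phi$ for $(g, c^{-1}\tau)$.

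The structural point is that $\csigma_{k,\phi} = \sigma_k^\infty(\cY_\phi;\cRic_\phi)$ sees $\cRic_\phi$ only through its eigenvalues \emph{as an endomorphism} of $TM$, that is, through the eigenvalues of $(cg)^{-1}\cRic_\phi = c^{-1}g^{-1}\cRic_\phi$. Hence, upon passing from $(cg,\tau)$ to $(g,c^{-1}\tau)$, both arguments of $\sigma_k^\infty$ get multiplied by $c^{-1}$. Substituting $t \mapsto ct$ in the generating function~\eqref{eqn:generating_function} of Proposition~\ref{prop:generating_function} shows that $\sigma_k^\infty$ is homogeneous of degree $k$, i.e.\ $\sigma_k^\infty(c^{-1}\mu_0;c^{-1}\mu) = c^{-k}\sigma_k^\infty(\mu_0;\mu)$, and therefore
\[ \csigma_{k,\phi}[cg,\tau] = c^{-k}\,\csigma_{k,\phi}[g,c^{-1}\tau] . \]

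Finally I would assemble the factors in~\eqref{eqn:mW}: the product $\tau^k c^{-k} = (c^{-1}\tau)^k$ absorbs the homogeneity of $\csigma_{k,\phi}$, the identity $(4\pi\tau)^{-n/2}c^{n/2} = (4\pi c^{-1}\tau)^{-n/2}$ absorbs the Jacobian $c^{n/2}$ coming from $\dvol_{cg}$, and the weight $e^{-\phi}$ is untouched; hence the integrand for $(cg,\tau)$ equals that for $(g,c^{-1}\tau)$, which is exactly~\eqref{eqn:mW_scale_invariance}. The only real obstacle is bookkeeping: one must consistently distinguish the $(0,2)$-tensor $\cRic_\phi$, which is scale invariant once $\lambda$ has been adjusted, from the endomorphism it induces, which scales by $c^{-1}$ and is what actually enters $\sigma_k^\infty$, and keep the various powers of $c$ aligned. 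Once that is done the verification is purely mechanical, which is presumably why the paper omits it.
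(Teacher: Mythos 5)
Your computation is correct and is precisely the ``simple computation'' that the paper explicitly omits. You correctly track the three scaling behaviours involved --- the $(0,2)$-tensor $\cRic_\phi$ being unchanged once $\lambda$ is adjusted, the endomorphism $g^{-1}\cRic_\phi$ and the scalar $\cY_\phi$ both acquiring a factor of $c^{-1}$ --- and the degree-$k$ homogeneity of $\sigma_k^\infty$ from the generating function, and these combine with the $c^{n/2}$ from the volume form and the explicit $\tau^k(4\pi\tau)^{-n/2}$ factors exactly as required.
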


For the purposes of this article, the main consequence of Lemma~\ref{lem:mW_scale_invariance} is that in order to compute the first variation of the $\mW_k$-functional, it suffices to compute the first variation through variations of the metric $g$ and the measure $\phi$ only.

\subsection{Gradient Ricci solitons}
\label{subsec:defn/grs}

We conclude this section with some useful facts about gradient Ricci solitons.  Recall that $(M^n,g,e^{-\phi}\dvol)$ is a gradient Ricci soliton if $\Ric_\phi=\lambda g$ for some constant $\lambda\in\bR$.  We are particularly interested in studying volume-normalized shrinking gradient Ricci solitons.

\begin{defn}
A Riemannian manifold with density $(M^n,g,e^{-\phi}\dvol)$ is a \emph{volume-normalized shrinking gradient Ricci soliton} if there is a constant $\tau>0$ such that $\Ric_\phi=\frac{1}{2\tau}g$ and
\[ \int_M (4\pi\tau)^{-\frac{n}{2}}e^{-\phi}\dvol = 1 . \]
\end{defn}

In particular, the Gaussian shrinkers are volume-normalized shrinking gradient Ricci solitons.  Note that if $(M^n,g,e^{-\phi}\dvol)$ satisfies $\Ric_\phi=\frac{1}{2\tau}g>0$, then the weighted volume $\int_M e^{-\phi}\dvol$ is finite~\cite{Morgan2005}, and hence we can add a constant to $\phi$ to ensure that $(M^n,g,e^{-\phi}\dvol)$ is volume-normalized.

We establish here three facts.  First, gradient Ricci solitons have constant weighted $\sigma_k$-curvature when the parameter is suitably chosen, and moreover, all volume-normalized shrinking gradient Ricci solitons except the shrinking Gaussians lie in all the negative weighted elliptic $k$-cones.  That the shrinking Gaussians only lie on the boundary of the negative weighted elliptic $k$-cones is not a problem in our definitions, as is made clear in Section~\ref{sec:obata} and Section~\ref{sec:local_extrema}.

\begin{lem}
\label{lem:modified_sigmak_grs}
Let $(M^n,g,e^{-\phi}\dvol)$ be a gradient Ricci soliton with $\Ric_\phi=\lambda g$.  For each $k\in\bN$, let $\csigma_{k,\phi}$ be the modified weighted $\sigma_k$-curvature determined by $\lambda$.  Then
\begin{equation}
\label{eqn:modified_sigmak_grs}
\csigma_{k,\phi} = \frac{1}{k!}\left(\csigma_{1,\phi}\right)^k
\end{equation}
is constant.  Moreover, if $(M^n,g,e^{-\phi}\dvol)$ is a volume-normalized shrinking gradient Ricci soliton, then either $(g,\phi)\in\Gamma_k^{\infty,-}$ for all $k\in\bN$ or $(M^n,g,e^{-\phi}\dvol)$ is isometric to a shrinking Gaussian, in which case $\csigma_{k,\phi}=0$ for all $k\in\bN$.
\end{lem}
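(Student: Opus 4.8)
The plan is to exploit the structure of the weighted elementary symmetric polynomials established in Section~\ref{sec:polynomial} together with the defining property of a gradient Ricci soliton. For a gradient Ricci soliton with $\Ric_\phi=\lambda g$ and parameter $\lambda$, the modified Bakry-\'Emery Ricci tensor $\cRic_\phi=\Ric+\nabla^2\phi-\lambda g$ vanishes identically. Hence, regarding $\cRic_\phi$ as an endomorphism, all of its eigenvalues are zero, so $\csigma_{k,\phi}=\sigma_k^\infty(\cY_\phi;\cRic_\phi)=\sigma_k^\infty(\cY_\phi;0,\dotsc,0)$. By the formula~\eqref{eqn:ordinary_sigmak} of Proposition~\ref{prop:ordinary_sigmak}, $\sigma_k^\infty(\mu_1;0,\dotsc,0)=\sum_{j=0}^k \frac{\mu_1^j}{j!}\sigma_{k-j}(0)=\frac{\mu_1^k}{k!}$, since $\sigma_{k-j}(0,\dotsc,0)$ is zero unless $k=j$. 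Applied with $\mu_1=\cY_\phi$, this gives $\csigma_{k,\phi}=\frac{1}{k!}(\cY_\phi)^k=\frac{1}{k!}(\csigma_{1,\phi})^k$, establishing~\eqref{eqn:modified_sigmak_grs}. Constancy of $\csigma_{k,\phi}$ then follows from constancy of $\csigma_{1,\phi}=\frac{1}{2}\cR_\phi$, which in turn follows from the Bianchi-type identity $\delta_\phi\cRic_\phi=\frac{1}{2}d\cR_\phi$ recorded in Section~\ref{sec:smms}: since $\cRic_\phi=0$, we get $d\cR_\phi=0$, so $\cR_\phi$ (hence $\csigma_{1,\phi}$) is constant.

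For the second assertion, suppose $(M^n,g,e^{-\phi}\dvol)$ is a volume-normalized shrinking gradient Ricci soliton, so $\lambda=\frac{1}{2\tau}$ for some $\tau>0$ and $\int_M(4\pi\tau)^{-n/2}e^{-\phi}\dvol=1$. The key input is Perelman's result, cited in the excerpt, that the $\nu$-entropy of any compact manifold is strictly negative; combined with Theorem~\ref{thm:obata/1} (the $k=1$ Obata-type theorem, referenced earlier in the text) this yields $\csigma_{1,\phi}<0$ at the critical pair $(\phi,\tau)$. Alternatively, one can simply evaluate: $\csigma_{1,\phi}=\cY_\phi=-\frac12(R+\lv\nabla\phi\rv^2-2\lambda\phi)$, and one knows the soliton identity $R+\lv\nabla\phi\rv^2-2\lambda\phi = \text{const}$, which by the $\nu$-entropy computation equals the (negative) entropy value; so $\csigma_{1,\phi}$ is a strictly negative constant \emph{unless} the soliton is the shrinking Gaussian, in which case $R\equiv 0$, $\nabla^2\phi=\lambda g$, $\phi=\frac{\lv x-x_0\rv^2}{4\tau}$, and a direct computation gives $\cY_\phi=0$, hence $\csigma_{k,\phi}=0$ for all $k$.

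Finally, in the non-Gaussian case with $\csigma_{1,\phi}=c<0$ constant, I want to conclude $(g,\phi)\in\Gamma_k^{\infty,-}$ for all $k$; i.e.\ $(-1)^j\csigma_{j,\phi}>0$ for $j=1,\dotsc,k$. But by~\eqref{eqn:modified_sigmak_grs} we have $\csigma_{j,\phi}=\frac{1}{j!}c^j$, so $(-1)^j\csigma_{j,\phi}=\frac{1}{j!}(-c)^j>0$ for every $j$, since $-c>0$. This holds for all $j\in\bN$, so $(g,\phi)$ lies in every negative weighted elliptic $k$-cone, as claimed.

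The computations here are all routine; the only point requiring care — and the one I would flag as the "hard part," though it is really just a matter of correctly invoking prior results — is justifying the strict negativity of $\csigma_{1,\phi}$ at a volume-normalized shrinker and cleanly separating the Gaussian case. This rests on Perelman's $\nu$-entropy sign together with Theorem~\ref{thm:obata/1}, and on the standard fact (Morgan~\cite{Morgan2005}, cited in the excerpt) that finite weighted volume lets one normalize $\phi$; everything else is algebra via Proposition~\ref{prop:ordinary_sigmak} and the Bianchi identity.
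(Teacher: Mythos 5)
The algebraic core of your proof---reducing to $\cRic_\phi=0$, invoking Proposition~\ref{prop:ordinary_sigmak} to get $\csigma_{k,\phi}=\frac{1}{k!}(\csigma_{1,\phi})^k$, and obtaining constancy from the Bianchi identity $\delta_\phi\cRic_\phi=\frac{1}{2}d\cR_\phi$---is correct and agrees with the paper's argument.

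However, there is a genuine gap in the second half. You justify $\csigma_{1,\phi}<0$ via Perelman's result that the $\nu$-entropy of a \emph{compact} manifold is strictly negative, together with Theorem~\ref{thm:obata/1}, which is also stated only for \emph{compact} volume-normalized shrinking gradient Ricci solitons. But the hypothesis of Lemma~\ref{lem:modified_sigmak_grs} carries no compactness assumption, and indeed it cannot: the exceptional case in the stated dichotomy is the shrinking Gaussian, which is noncompact. So an argument grounded in Perelman's compact $\nu$-entropy sign cannot cover the cases that matter; at best it shows strict negativity when $M$ is compact, and says nothing about the characterization of equality. Your ``alternatively'' paragraph gestures at the soliton identity $R+\lv\nabla\phi\rv^2-2\lambda\phi=\text{const}$ and asserts the constant equals the (negative) entropy, but that equality is precisely the content that needs a citation or proof in the noncompact case, and you do not supply one. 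The paper sidesteps all of this by citing Yokota's theorem (Yokota 2008 and its addendum), which directly states that for \emph{any} volume-normalized shrinking gradient Ricci soliton one has $\csigma_{1,\phi}\leq0$, with equality if and only if the soliton is isometric to a shrinking Gaussian; plugging this into~\eqref{eqn:modified_sigmak_grs} immediately yields both branches of the dichotomy. You should replace your Perelman/$\nu$-entropy argument with this citation (or an equivalent noncompact rigidity statement), after which the rest of your proof---computing $(-1)^j\csigma_{j,\phi}=\frac{(-\csigma_{1,\phi})^j}{j!}>0$ when $\csigma_{1,\phi}<0$---goes through unchanged.
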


\begin{proof}

By assumption $\cRic_\phi=0$, and hence $\csigma_{k,\phi}=\sigma_k^\infty(\cY_\phi;0)$.  The conclusion~\eqref{eqn:modified_sigmak_grs} follows immediately from Proposition~\ref{prop:ordinary_sigmak}, while the fact $\delta_\phi\cRic_\phi=d\csigma_{1,\phi}$ implies that~\eqref{eqn:modified_sigmak_grs} is constant.  Lastly, Yokota~\cite{Yokota2008,Yokota2008add} showed that for any volume-normalized shrinking gradient Ricci soliton $(M^n,g,e^{-\phi}\dvol)$, it holds that $\csigma_{1,\phi}\leq0$ with equality if and only if $(M^n,g,e^{-\phi}\dvol)$ is isometric to a shrinking Gaussian.  Inserting this into~\eqref{eqn:modified_sigmak_grs} yields the final claim.
\end{proof}

Second, the potential $\phi$ of a volume-normalized shrinking gradient Ricci soliton is an $L^2$-eigenvalue of the weighted Laplacian.

\begin{lem}
\label{lem:grs_potential}
Let $(M^n,g,e^{-\phi}\dvol)$ be a volume-normalized shrinking gradient Ricci soliton such that $\Ric_\phi=\frac{1}{2\tau}g$.  Let $\phi_0:=\phi-\frac{n}{2}-2\tau\csigma_{1,\phi}$ be the mean-free (with respect to $e^{-\phi}\dvol$) part of $\phi$.  Then
\begin{align}
\label{eqn:grs_potential_eigenvalue} -\Delta_\phi\phi_0 & = \frac{1}{\tau}\phi_0, \\
\label{eqn:grs_potential_inequality} \int_M \phi_0^2 (4\pi\tau)^{-\frac{n}{2}}e^{-\phi}\dvol & \leq \frac{n}{2} .
\end{align}
Moreover, equality holds in~\eqref{eqn:grs_potential_inequality} if and only if $(M^n,g,e^{-\phi}\dvol)$ is a shrinking Gaussian.
\end{lem}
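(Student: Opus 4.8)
The plan is to prove the eigenvalue equation~\eqref{eqn:grs_potential_eigenvalue} by a direct manipulation of the standard soliton identities, and then to extract the variance bound~\eqref{eqn:grs_potential_inequality} by feeding the resulting eigenfunction into the weighted Bochner formula. For the first step, I would begin from the two consequences of $\Ric_\phi=\frac{1}{2\tau}g$: tracing gives $R+\Delta\phi=\frac{n}{2\tau}$, and combining the contracted second Bianchi identity with the soliton equation gives the conserved quantity $R+\lv\nabla\phi\rv^2-\frac{1}{\tau}\phi=\mu$ for some constant $\mu$ (cf.~\cite{Hamilton_1995}). Substituting both relations into the definition of $\csigma_{1,\phi}$ with $\lambda=\frac{1}{2\tau}$ shows $\csigma_{1,\phi}=-\frac{\mu}{2}$, consistent with Lemma~\ref{lem:modified_sigmak_grs}. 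Since $\nabla\phi_0=\nabla\phi$ and $\Delta_\phi\phi_0=\Delta\phi-\lv\nabla\phi\rv^2$, eliminating $R$ and $\Delta\phi$ from these identities gives $-\Delta_\phi\phi_0=\frac{1}{\tau}\bigl(\phi-\frac{n}{2}-2\tau\csigma_{1,\phi}\bigr)=\frac{1}{\tau}\phi_0$, which is~\eqref{eqn:grs_potential_eigenvalue}; in particular $\phi_0$ is $L^2(e^{-\phi}\dvol)$-orthogonal to the constants, confirming that it is the mean-free part of $\phi$. Throughout I use the integration-by-parts identity $\int_M\Delta_\phi v\,e^{-\phi}\dvol=0$ for the functions $v$ that appear; this is immediate when $M$ is compact and, in general, is justified by the known quadratic, resp.\ linear, growth bounds for $\phi$, resp.\ $\lv\nabla\phi\rv$, on complete shrinking solitons, which make all the integrands below integrable against the finite measure $e^{-\phi}\dvol$ (Morgan~\cite{Morgan2005}).

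For the variance bound, I would apply the weighted Bochner formula to $u=\phi_0$. Using $\Delta_\phi\phi_0=-\frac{1}{\tau}\phi_0$ and $\Ric_\phi=\frac{1}{2\tau}g$, it reduces to $\frac12\Delta_\phi\lv\nabla\phi_0\rv^2=\lv\nabla^2\phi_0\rv^2-\frac{1}{2\tau}\lv\nabla\phi_0\rv^2$. Integrating against $dm:=(4\pi\tau)^{-\frac n2}e^{-\phi}\dvol$ annihilates the left-hand side and yields $\int\lv\nabla^2\phi_0\rv^2\,dm=\frac{1}{2\tau}\int\lv\nabla\phi_0\rv^2\,dm$. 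Pairing~\eqref{eqn:grs_potential_eigenvalue} with $\phi_0$ gives $\int\lv\nabla\phi_0\rv^2\,dm=\frac1\tau\int\phi_0^2\,dm$, and since $\int\Delta_\phi\phi_0\,dm=0$ one also gets $\int\Delta\phi_0\,dm=\int\lv\nabla\phi_0\rv^2\,dm=\frac1\tau\int\phi_0^2\,dm$. Combining the pointwise inequality $\lv\nabla^2\phi_0\rv^2\geq\frac1n(\Delta\phi_0)^2$ with Jensen's inequality for the probability measure $dm$ then gives $\frac{1}{2\tau^2}\int\phi_0^2\,dm=\int\lv\nabla^2\phi_0\rv^2\,dm\geq\frac1n\bigl(\int\Delta\phi_0\,dm\bigr)^2=\frac{1}{n\tau^2}\bigl(\int\phi_0^2\,dm\bigr)^2$, which rearranges to~\eqref{eqn:grs_potential_inequality}.

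For the equality case I would argue as follows. Equality forces $\lv\nabla^2\phi_0\rv^2=\frac1n(\Delta\phi_0)^2$ pointwise and $\Delta\phi_0$ constant, hence $\nabla^2\phi_0=cg$ for a constant $c$; the soliton equation then makes $(M,g)$ Einstein with $\Ric=(\frac{1}{2\tau}-c)g$. Equality also requires $\int\phi_0^2\,dm=\frac n2>0$, so $\phi_0$ is nonconstant. If $c\leq0$, then $\Ric\geq\frac{1}{2\tau}g>0$, so $M$ is compact by Myers' theorem; but a nonconstant function with $\nabla^2\phi_0=cg$, $c\le 0$, cannot exist on a compact manifold (the divergence theorem rules out $c<0$, and $c=0$ forces $\phi_0$ constant). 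Hence $c>0$, and by Tashiro's theorem a complete Riemannian manifold carrying a nonconstant function with Hessian $cg$, $c>0$, must be flat $\bR^n$ with $\phi_0$ a paraboloid; flatness forces $c=\frac{1}{2\tau}$, the mean-free condition pins the additive constant, and the volume normalization forces $\csigma_{1,\phi}=0$, so $\phi(x)=\frac{\lv x-x_0\rv^2}{4\tau}$, i.e.\ a shrinking Gaussian. The converse is the Gaussian moment computation $\int_{\bR^n}\bigl(\frac{\lv y\rv^2}{4\tau}-\frac n2\bigr)^2(4\pi\tau)^{-\frac n2}e^{-\frac{\lv y\rv^2}{4\tau}}\,dy=\frac n2$.

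I expect the main obstacle to be this last step: one must carefully exclude the compact Einstein shrinkers (for which $\phi$ is constant, $\phi_0\equiv0$, and the inequality is strict) and invoke the correct completeness--rigidity statement, together with checking that every integration by parts used in the first two steps is legitimate in the non-compact setting. The first two steps themselves are routine once the soliton identities and the weighted Bochner formula are in hand.
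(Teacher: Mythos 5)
Your derivation of the eigenvalue equation \eqref{eqn:grs_potential_eigenvalue} is essentially the same algebraic manipulation of the soliton identities as the paper's (the paper simply writes $2\tau\csigma_{1,\phi}=\tau(\Delta_\phi\phi+\tr\Ric_\phi)+\phi-n$ and substitutes $\tr\Ric_\phi=\frac{n}{2\tau}$, avoiding Hamilton's constant $\mu$). But your proof of the variance bound \eqref{eqn:grs_potential_inequality} and of the equality case is a genuinely different route. The paper instead uses the Pigola--Rimoldi--Setti result that $R\geq 0$ on a shrinking soliton, with $R\equiv 0$ iff shrinking Gaussian: rewriting $\csigma_{1,\phi}=-\frac{1}{2}(R+\lv\nabla\phi\rv^2)+\frac{1}{2\tau}\phi$ gives the \emph{pointwise} inequality $\tau\lv\nabla\phi_0\rv^2\leq\phi-2\tau\csigma_{1,\phi}$, whose integral combined with the single integration by parts $\int\phi_0^2\,dm=\tau\int\lv\nabla\phi_0\rv^2\,dm$ gives \eqref{eqn:grs_potential_inequality} immediately, and the equality characterization is inherited from the $R\equiv 0$ case of that cited result. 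Your route --- weighted Bochner on $\phi_0$, Cauchy--Schwarz $\lv\nabla^2\phi_0\rv^2\geq\frac{1}{n}(\Delta\phi_0)^2$, Jensen for $dm$, then Tashiro plus a Myers argument for rigidity --- is internally coherent and buys a more self-contained proof of the equality case (you essentially re-derive the rigidity that the paper imports from \cite{PigolaRimoldiSetti2011}). What it costs is a heavier integrability burden: the paper only needs $\phi,\lv\nabla\phi\rv\in L^2(e^{-\phi}\dvol)$, which is precisely what \cite{CaoZhou2010} supplies, whereas your Bochner step additionally requires $\lv\nabla^2\phi_0\rv\in L^2(dm)$ and the vanishing of $\int\Delta_\phi\lv\nabla\phi_0\rv^2\,dm$, which needs control on $\nabla^2\phi=\frac{1}{2\tau}g-\Ric$ (equivalently, on $\lv\Ric\rv$) that is not covered by the quadratic/linear growth bounds you invoke and is not a consequence of Morgan's volume-finiteness. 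These weighted integral estimates on $\lv\Ric\rv$ do hold for complete shrinkers, but you would need to cite them explicitly (e.g.\ Munteanu--Sesum or the refined Cao--Zhou estimates) rather than gesture at growth of $\phi$ and $\lv\nabla\phi\rv$. Aside from that citation/justification gap and a minor misattribution (the growth bounds are Cao--Zhou, not Morgan), the argument is correct.
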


\begin{proof}

By definition,
\[ 2\tau\csigma_{1,\phi} = \tau\left(\Delta_\phi\phi + \tr\Ric_\phi\right) + \phi - n, \]
from which~\eqref{eqn:grs_potential_eigenvalue} immediately follows.  From~\cite{PigolaRimoldiSetti2011} we know that the scalar curvature of $(M^n,g,e^{-\phi}\dvol)$ is nonnegative and vanishes if and only if $(M^n,g,e^{-\phi}\dvol)$ is a shrinking Gaussian, and hence
\begin{equation}
\label{eqn:grs_potential_pw_inequality}
2\tau\csigma_{1,\phi} = -\tau\left(R + \lv\nabla\phi\rv^2\right) + \phi \leq -\tau\lv\nabla\phi\rv^2 + \phi
\end{equation}
with the same characterization of equality.  From~\cite{CaoZhou2010} we know that $\phi,\lv\nabla\phi\rv\in L^2(M,e^{-\phi}\dvol)$.  Hence~\eqref{eqn:grs_potential_eigenvalue} and~\eqref{eqn:grs_potential_pw_inequality} imply that
\[ \int_M \phi_0^2(4\pi\tau)^{-\frac{n}{2}}e^{-\phi}\dvol = \tau\int_M \lv\nabla\phi_0\rv^2(4\pi\tau)^{-\frac{n}{2}} e^{-\phi}\dvol \leq \frac{n}{2} \]
with equality if and only if $(M^n,g,e^{-\phi}\dvol)$ is a shrinking Gaussian.
\end{proof}

Third, the total unmodified weighted $\sigma_1$- and $\sigma_2$-curvatures are readily computed for volume-normalized shrinking gradient Ricci solitons.

\begin{lem}
\label{lem:total_sigmak_grs}
Let $(M^n,g,e^{-\phi}\dvol)$ be a volume-normalized shrinking gradient Ricci soliton with $\Ric_\phi=\frac{1}{2\tau}g$.  Then
\begin{align}
\label{eqn:total_sigma1_grs} \int_M \sigma_{1,\phi}(4\pi\tau)^{-\frac{n}{2}}e^{-\phi}\dvol & = \frac{n}{4\tau} , \\
\label{eqn:total_sigma2_grs} \int_M \sigma_{2,\phi}(4\pi\tau)^{-\frac{n}{2}}e^{-\phi}\dvol & = \frac{n(n-4)}{32\tau^2} + \frac{1}{8\tau^2}\int_M \phi_0^2(4\pi\tau)^{-\frac{n}{2}}e^{-\phi}\dvol
\end{align}
for $\phi_0$ as in Lemma~\ref{lem:grs_potential}.
\end{lem}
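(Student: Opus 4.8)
The plan is to reduce both identities to two facts established earlier for gradient Ricci solitons: that the modified weighted $\sigma_1$-curvature $\csigma_{1,\phi}$ (computed with parameter $\lambda=\frac{1}{2\tau}$) is constant, which is part of Lemma~\ref{lem:modified_sigmak_grs}, and that $\phi_0=\phi-\frac{n}{2}-2\tau\csigma_{1,\phi}$ has vanishing integral against $(4\pi\tau)^{-n/2}e^{-\phi}\dvol$, which is how $\phi_0$ is introduced in Lemma~\ref{lem:grs_potential}. The only other ingredient is the elementary bookkeeping relating the unmodified curvatures $\sigma_{k,\phi}$ (parameter $\lambda=0$) to their modified counterparts.

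First I would extract a pointwise formula for $\sigma_{1,\phi}$. By Definition~\ref{defn:sigmak} (cf.\ the formula for $\csigma_{1,\phi}$ recorded in Section~\ref{sec:intro}), the weighted $\sigma_1$-curvature with parameter $0$ and with parameter $\frac{1}{2\tau}$ differ exactly by the term $\lambda(\phi-n)$, so $\sigma_{1,\phi}=\csigma_{1,\phi}+\frac{1}{2\tau}(n-\phi)$. Substituting $\phi=\phi_0+\frac{n}{2}+2\tau\csigma_{1,\phi}$ and cancelling the $\csigma_{1,\phi}$ terms gives the clean expression
\[ \sigma_{1,\phi}=\frac{n}{4\tau}-\frac{\phi_0}{2\tau}. \]
Integrating against $(4\pi\tau)^{-n/2}e^{-\phi}\dvol$, which has total mass $1$ by the volume-normalization, and using $\int_M\phi_0\,(4\pi\tau)^{-n/2}e^{-\phi}\dvol=0$, immediately yields~\eqref{eqn:total_sigma1_grs}.

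For $\sigma_{2,\phi}$ I would start from the identity $\sigma_{2,\phi}=\frac{1}{2}\bigl((\sigma_{1,\phi})^2-\lv\Ric+\nabla^2\phi\rv^2\bigr)$, which is the case $\lambda=0$ of the expression for $\csigma_{2,\phi}$ stated in Section~\ref{sec:intro} (equivalently, it follows from applying Proposition~\ref{prop:ordinary_sigmak} to $\sigma_2^\infty$). Since $\Ric_\phi=\frac{1}{2\tau}g$ we have $\lv\Ric+\nabla^2\phi\rv^2=\frac{n}{4\tau^2}$, and squaring the formula $\sigma_{1,\phi}=\frac{1}{4\tau}(n-2\phi_0)$ from the previous step gives
\[ \sigma_{2,\phi}=\frac{1}{32\tau^2}\bigl(n^2-4n-4n\phi_0+4\phi_0^2\bigr). \]
Integrating against the normalized measure, the term linear in $\phi_0$ drops out by mean-freeness, the constant terms contribute $\frac{n(n-4)}{32\tau^2}$, and the quadratic term contributes $\frac{1}{8\tau^2}\int_M\phi_0^2\,(4\pi\tau)^{-n/2}e^{-\phi}\dvol$; this is exactly~\eqref{eqn:total_sigma2_grs}.

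There is essentially no genuine obstacle here: the lemma is a short computation once the pointwise formula $\sigma_{1,\phi}=\frac{n}{4\tau}-\frac{\phi_0}{2\tau}$ is in hand. The only places requiring care are the passage between the modified and unmodified weighted curvatures — i.e.\ tracking the $\lambda(\phi-n)$ correction correctly — and invoking the volume-normalization together with the mean-freeness of $\phi_0$ at the right moments.
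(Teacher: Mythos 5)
Your proof is correct and follows essentially the same route as the paper's: derive the pointwise identity $\sigma_{1,\phi}=\frac{n}{4\tau}-\frac{\phi_0}{2\tau}$ (the paper's~\eqref{eqn:unmodified_sigma1}), substitute into $\sigma_{2,\phi}=\frac{1}{2}((\sigma_{1,\phi})^2-\lv\Ric_\phi\rv^2)$ using $\Ric_\phi=\frac{1}{2\tau}g$ to get~\eqref{eqn:unmodified_sigma2}, and integrate against the normalized measure using mean-freeness of $\phi_0$. The paper states the pointwise formula for $\sigma_{1,\phi}$ without the intermediate passage through the modified/unmodified comparison, but your derivation of it is the same computation made explicit.
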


\begin{proof}

To begin, observe that
\begin{equation}
\label{eqn:unmodified_sigma1}
\sigma_{1,\phi} = -\frac{1}{2\tau}\left(\phi_0-\frac{n}{2}\right) .
\end{equation}
Inserting this into the definition of $\sigma_{2,\phi}$ immediately yields
\begin{equation}
\label{eqn:unmodified_sigma2}
\sigma_{2,\phi} = \frac{1}{8\tau^2}\left( \phi_0^2 - n\phi_0 + \frac{n(n-4)}{4}\right) .
\end{equation}
The result now follows by integration.
\end{proof}

\begin{remark}

An intriguing corollary of Lemma~\ref{lem:total_sigmak_grs} is that the total unmodified weighted $\sigma_2$-curvature characterizes nontrivial compact gradient Ricci solitons in dimension four: if $(M^4,g,e^{-\phi}\dvol)$ is a compact gradient Ricci soliton, then $\int\sigma_{2,\phi}>0$ if and only if $\phi$ is nonconstant.
\end{remark}
\section{Variations of the total weighted $\sigma_1$- and $\sigma_2$-curvatures}
\label{sec:first_variation}

The variational structure of the weighted $\sigma_k$-curvatures is important for many reasons.  For example, it gives rise to formulae relating the derivative of weighted $\sigma_k$-curvatures to weighted divergences of certain sections of $S^2T^\ast M$ (cf.\ \cite[Paragraph~4.10]{Besse}) and it suggests a natural route to solving curvature prescription problems involving the weighted $\sigma_k$-curvatures (cf. \cite{GeWang2006,GuanWang2004,ShengTrudingerWang2007}).  The first application is of particular relevance to this article, as these divergence formulae play an important role in proving Obata-type theorems (cf.\ Section~\ref{sec:obata}) and also in proving that the weighted $\sigma_k$-curvatures are variational (cf.\ Section~\ref{sec:variational_status}).

In this section we compute explicitly the first variations of the total weighted $\sigma_k$-curvature functionals for $k\in\{1,2\}$.  When $k=1$ this is due to Perelman~\cite{Perelman1}, and the formula has many applications in the study of the Ricci flow beyond a simple divergence formula.  While we only use our computation in the case $k=2$ for the divergence formula and to characterize the critical points of $\mW_2\colon\mC_1(g)\to\bR$, we expect this formula to find further applications.  The divergence formulae derived in this section are used to motivate the general divergence formulae for the weighted Newton tensors on manifolds with density derived in Section~\ref{sec:divergence}.  These in turn provide one of the main ingredients in the characterization given in Section~\ref{sec:variational_status} of when the weighted $\sigma_k$-curvature is variational.

\subsection{The first variation of the total weighted $\sigma_1$-curvature}
\label{subsec:first_variation/1}

As the total weighted $\sigma_1$-curvature functional is Perelman's $\mW$-functional, the results of this subsection can all be found in~\cite{Perelman1}.  Nevertheless, it is convenient to recall the first variation of $\mW_1$ and its proof in order to motivate the computation of the first variation of $\mW_2$ given in Subsection~\ref{subsec:first_variation/2} below.

We say that $(g_t,\phi_t)$ with $t\in(-\varepsilon,\varepsilon)$ is a \emph{variation of the manifold with density $(M^n,g,e^{-\phi}\dvol)$} if $g_t$ is a smooth family of Riemannian metrics on $M$, $\phi_t$ is a smooth family of smooth functions on $M$, and $g_0=g$ and $\phi_0=\phi$.  We denote by ${}^\bullet$ the operator $\frac{d}{dt}\rv_{t=0}$; e.g.\ $\cR_\phi^\bullet$ denotes
\[ \cR_\phi^\bullet := \left.\frac{d}{dt}\right|_{t=0} \bigl(\cR_\phi\bigr)_t \]
for $(\cR_\phi)_t$ the weighted scalar curvature of $(M^n,g_t,e^{-\phi_t}\dvol_{g_t})$ with fixed parameter $\lambda\in\bR$.

The main ingredient in the computation of the first variation of the total weighted $\sigma_1$-curvature is the following formula for the linearization of the weighted scalar curvature.

\begin{lem}
\label{lem:simple_variation1}
Let $(M^n,g,e^{-\phi}\dvol)$ be a manifold with density and let $\lambda\in\bR$.  Given a variation $(g_t,\phi_t)$ with $g^\bullet=h$ and $\phi^\bullet=\psi$, it holds that
\begin{equation}
\label{eqn:simple_variation1}
\cR_\phi^\bullet = -\left\lp\cRic_\phi,h\right\rp + \delta_\phi^2h + 2(\Delta_\phi+\lambda)\left(\psi-\frac{1}{2}\tr h\right) .
\end{equation}
\end{lem}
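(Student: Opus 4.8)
The plan is to prove~\eqref{eqn:simple_variation1} by differentiating the defining expression $\cR_\phi = R + 2\Delta\phi - \lv\nabla\phi\rv^2 + 2\lambda(\phi-n)$ summand by summand, with the parameter $\lambda$ held fixed; this is a pointwise identity, so no integration is needed. First I would collect the standard infinitesimal formulae $(g^{ij})^\bullet = -h^{ij}$, $(\Gamma_{ij}^k)^\bullet = \tfrac12 g^{k\ell}(\nabla_i h_{j\ell} + \nabla_j h_{i\ell} - \nabla_\ell h_{ij})$, and the linearization of the scalar curvature, $R^\bullet = \divsymb\divsymb h - \Delta\tr h - \lp\Ric, h\rp$, where $\divsymb$ denotes the divergence on the first slot.

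Next I would compute the variations of the two $\phi$-dependent summands. Writing $\Delta\phi = g^{ij}\nabla^2_{ij}\phi$ and using the contraction $g^{ij}(\Gamma_{ij}^k)^\bullet = (\divsymb h)^k - \tfrac12\nabla^k\tr h$, one gets
\[ (\Delta\phi)^\bullet = \Delta\psi - \lp\nabla^2\phi, h\rp - \bigl\lp\divsymb h - \tfrac12 d\tr h,\nabla\phi\bigr\rp , \]
while writing $\lv\nabla\phi\rv^2 = g^{ij}\partial_i\phi\,\partial_j\phi$ gives $\bigl(\lv\nabla\phi\rv^2\bigr)^\bullet = -h(\nabla\phi,\nabla\phi) + 2\lp\nabla\psi,\nabla\phi\rp$. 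Adding $R^\bullet + 2(\Delta\phi)^\bullet - \bigl(\lv\nabla\phi\rv^2\bigr)^\bullet + 2\lambda\psi$ then presents $\cR_\phi^\bullet$ as an explicit sum of terms of orders two, one, and zero in $(h,\psi)$.

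The last step is to recognize this sum as the right-hand side of~\eqref{eqn:simple_variation1}. Expanding $\delta_\phi = \delta - \iota_{\nabla\phi}$ on symmetric $2$-tensors and $\delta_\phi = \delta - \lp\nabla\phi, \cdot\rp$ on $1$-forms yields
\[ \delta_\phi^2 h = \divsymb\divsymb h - \lp\nabla^2\phi, h\rp - 2\lp\divsymb h, \nabla\phi\rp + h(\nabla\phi,\nabla\phi) , \]
and separately $-\lp\Ric, h\rp - 2\lp\nabla^2\phi, h\rp = -\lp\cRic_\phi, h\rp - \lp\nabla^2\phi, h\rp - \lambda\tr h$. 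Together these show that the terms $\divsymb\divsymb h$, the two $\lp\nabla^2\phi, h\rp$ contributions, the two $\lp\divsymb h,\nabla\phi\rp$ contributions, and $h(\nabla\phi,\nabla\phi)$ collect into $-\lp\cRic_\phi, h\rp + \delta_\phi^2 h - \lambda\tr h$; the leftover terms $-\lambda\tr h - \Delta\tr h + \lp d\tr h,\nabla\phi\rp + 2\Delta\psi - 2\lp\nabla\psi,\nabla\phi\rp + 2\lambda\psi$ are exactly the expansion of $2(\Delta_\phi + \lambda)\bigl(\psi - \tfrac12\tr h\bigr)$ obtained from $\Delta_\phi u = \Delta u - \lp\nabla u, \nabla\phi\rp$. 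This is~\eqref{eqn:simple_variation1}.

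The main obstacle is the bookkeeping in this final regrouping: the term $\divsymb\divsymb h$ is produced by $R^\bullet$ whereas the cross terms $\lp\divsymb h, \nabla\phi\rp$ are produced by $(\Delta\phi)^\bullet$, and these must be combined with the Hessian and gradient-squared terms into the single quantity $\delta_\phi^2 h$, with careful attention to the factors of two arising from the symmetry of $h$. Once $\delta_\phi^2 h$ is written out explicitly, the rest is routine. One could instead derive~\eqref{eqn:simple_variation1} from the known linearization of $\Ric$ together with the weighted contracted Bianchi identity, but the direct computation sketched above is the most transparent.
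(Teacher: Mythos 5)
Your proof is correct and follows the same route as the paper, which simply cites the standard linearization formulae for $R$ and $\Delta\phi$ from \cite[Section~1.K]{Besse} and declares the result immediate; your write-up just supplies the regrouping into $-\lp\cRic_\phi,h\rp + \delta_\phi^2 h + 2(\Delta_\phi+\lambda)(\psi-\tfrac12\tr h)$ that the paper omits.
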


\begin{proof}

This follows immediately from the formulae for $R^\bullet$ and $(\Delta\phi)^\bullet$ as can be found, for example, in~\cite[Section~1.K]{Besse}.
\end{proof}

Using also the well-known formula for the first variation of the Riemannian volume element leads to the formula for the first variation of the total weighted $\sigma_1$-curvature functional.

\begin{prop}
\label{prop:first_variation1}
Let $(M^n,g,e^{-\phi}\dvol)$ be a smooth metric measure space and let $\lambda\in\bR$.  Given a variation $(g_t,\phi_t)$ with $g^\bullet=h$ and $\phi^\bullet=\psi$, it holds that
\[ \mW_1^\bullet = \int_M \bigg[ \frac{\tau}{2}\left\lp\cE_{1,\phi},h\right\rp - \left(\tau\csigma_{1,\phi} - \frac{1}{2}\right)\left(\psi-\frac{1}{2}\tr h\right) \bigg] (4\pi\tau)^{-\frac{n}{2}}e^{-\phi}\dvol . \]
\end{prop}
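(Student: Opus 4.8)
The plan is to compute $\mW_1^\bullet$ directly by differentiating the integral defining $\mW_1(g,\phi,\tau)$ with respect to the metric $g$ and the density $\phi$ only. Recall that by Lemma~\ref{lem:mW_scale_invariance} and the remark following it, the scale invariance of $\mW_1$ means the first variation is fully captured by variations of $(g,\phi)$, so I may fix $\tau$ throughout. Since $\mW_1 = \frac{1}{2}\mW$ for Perelman's $\mW$-functional, and the parameter here is $\lambda = \frac{1}{2\tau}$, I will use $\mW_1(g,\phi,\tau) = \int_M \tau\,\csigma_{1,\phi}\,(4\pi\tau)^{-n/2}e^{-\phi}\dvol_g$ with $\csigma_{1,\phi} = \frac{1}{2}\cR_\phi$ and differentiate the three $t$-dependent factors: $\csigma_{1,\phi}$, $e^{-\phi}$, and $\dvol_g$.

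The key inputs are: Lemma~\ref{lem:simple_variation1}, which gives $\cR_\phi^\bullet = -\langle\cRic_\phi,h\rangle + \delta_\phi^2 h + 2(\Delta_\phi+\lambda)(\psi - \frac12\tr h)$; the standard formula $(\dvol_g)^\bullet = \frac12(\tr h)\dvol_g$; and the trivial $(e^{-\phi})^\bullet = -\psi\, e^{-\phi}$. Assembling these, the integrand's variation is
\[
\tau\Bigl[\tfrac12\bigl(-\langle\cRic_\phi,h\rangle + \delta_\phi^2 h + 2(\Delta_\phi+\lambda)(\psi-\tfrac12\tr h)\bigr)\Bigr] + \tau\csigma_{1,\phi}\bigl(\tfrac12\tr h - \psi\bigr),
\]
all against $(4\pi\tau)^{-n/2}e^{-\phi}\dvol_g$. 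Now I integrate by parts with respect to the weighted measure: $\int (\delta_\phi^2 h)\,e^{-\phi}\dvol = 0$ by the definition of $\delta_\phi$ as (minus) the formal adjoint, and similarly $\int (\Delta_\phi u)\,e^{-\phi}\dvol = 0$ for any $u$, so the term $\tau(\Delta_\phi+\lambda)(\psi-\frac12\tr h)$ integrates to $\tau\lambda\int(\psi-\frac12\tr h)e^{-\phi}\dvol = \frac12\int(\psi-\frac12\tr h)e^{-\phi}\dvol$ using $\lambda = \frac{1}{2\tau}$. Collecting the remaining algebraic terms, the coefficient of $(\psi - \frac12\tr h)$ becomes $\frac12 - \tau\csigma_{1,\phi}$, and the coefficient of $h$ is $-\frac{\tau}{2}\langle\cRic_\phi,h\rangle + \frac{\tau}{2}\csigma_{1,\phi}\langle g,h\rangle = \frac{\tau}{2}\langle -\cRic_\phi + \csigma_{1,\phi}g, h\rangle$. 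To match the claimed form I use the definitions in Section~\ref{sec:defn}: $\cT_{0,\phi} = \csigma_{0,\phi}g = g$ and $\cT_{1,\phi} = \csigma_{1,\phi}g - \cRic_\phi$, so $\cE_{1,\phi} = \cT_{1,\phi} - \csigma_{1,\phi}g = -\cRic_\phi$; hence the $h$-coefficient is exactly $\frac{\tau}{2}\langle\cE_{1,\phi},h\rangle$, which is precisely the stated formula.

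I do not expect a serious obstacle here; this is essentially Perelman's computation reorganized with the paper's notation. The one point requiring a little care is bookkeeping the $\lambda$-term: the modified scalar curvature carries the explicit $2\lambda(\phi-n)$ piece, and one must make sure that differentiating $\csigma_{1,\phi}$ via Lemma~\ref{lem:simple_variation1} already accounts for it (it does, since Lemma~\ref{lem:simple_variation1} is stated for the modified $\cR_\phi$ with parameter $\lambda$ fixed) and not double-count it against the $e^{-\phi}$ variation. The other mild subtlety is confirming the sign conventions so that $\cE_{1,\phi} = -\cRic_\phi$ with the paper's convention $\Rm\cdot g = \Ric$ and $\cT_{1,\phi} = \sum_{j=0}^1(-1)^j\csigma_{1-j,\phi}\cRic_\phi^j = \csigma_{1,\phi}\,\Id - \cRic_\phi$; once that is pinned down, the identification of the $h$-coefficient with $\frac{\tau}{2}\langle\cE_{1,\phi},h\rangle$ is immediate. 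Since the statement attributes the result to Perelman~\cite{Perelman1}, it suffices to present the computation at this level of detail and cite~\cite{Perelman1} for the original derivation.
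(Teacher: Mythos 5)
Your overall strategy matches the paper's: write $\csigma_{1,\phi}=\frac12\cR_\phi$, differentiate the measure $(4\pi\tau)^{-n/2}e^{-\phi}\dvol_g$, apply Lemma~\ref{lem:simple_variation1}, drop the weighted divergence terms by integration by parts, and identify the remainder using $\cE_{1,\phi}=-\cRic_\phi$. You also make explicit the integration-by-parts step (discarding $\int\delta_\phi^2h\,e^{-\phi}\dvol$ and $\int\Delta_\phi u\,e^{-\phi}\dvol$) that the paper leaves implicit, which is a genuine clarification.

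However, there is a bookkeeping slip in the final collection of terms. After integration by parts, the integrand is exactly
\[
-\frac{\tau}{2}\langle\cRic_\phi,h\rangle+\Bigl(\frac12-\tau\csigma_{1,\phi}\Bigr)\Bigl(\psi-\frac12\tr h\Bigr),
\]
so once you keep $(\psi-\frac12\tr h)$ as a unit with coefficient $\frac12-\tau\csigma_{1,\phi}$, the remaining $h$-coefficient is simply $-\frac{\tau}{2}\langle\cRic_\phi,h\rangle=\frac{\tau}{2}\langle\cE_{1,\phi},h\rangle$, with no additional $\frac{\tau}{2}\csigma_{1,\phi}\langle g,h\rangle$ term. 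You instead wrote the $h$-coefficient as $\frac{\tau}{2}\langle-\cRic_\phi+\csigma_{1,\phi}g,h\rangle$, which is $\frac{\tau}{2}\langle\cT_{1,\phi},h\rangle$, double-counting the $\csigma_{1,\phi}\tr h$ contribution already inside $(\psi-\frac12\tr h)$; and then, despite correctly deriving $\cE_{1,\phi}=-\cRic_\phi$, you declared this expression to equal $\frac{\tau}{2}\langle\cE_{1,\phi},h\rangle$, which would require $\cT_{1,\phi}=\cE_{1,\phi}$. The two slips cancel so the final display is right, but the intermediate step as written is internally inconsistent; either keep $(\psi-\frac12\tr h)$ as a unit throughout (in which case there is no extra $\csigma_{1,\phi}g$ in the $h$-coefficient), or expand fully in $\psi$ and $h$ (in which case the $h$-coefficient also acquires a $-\frac14\tr h$ term you omitted).
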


\begin{proof}

By definition, $\csigma_{1,\phi}=\frac{1}{2}\cR_\phi$.  Thus
\[ \mW_1^\bullet =\int_M \left[\frac{\tau}{2}\cR_\phi^\bullet - \tau\csigma_{1,\phi}\left(\psi-\frac{1}{2}\tr h\right)\right](4\pi\tau)^{-\frac{n}{2}}e^{-\phi}\dvol . \]
The desired conclusion follows immediately from this, Lemma~\ref{lem:simple_variation1}, and the definition of $\cE_{1,\phi}$.
\end{proof}

Since the total weighted $\sigma_1$-curvature functional is invariant under pullback by diffeomorphisms, we can derive as in~\cite[Paragraph~4.10]{Besse} the following divergence formula for the weighted $\sigma_1$-curvature.  This provides an alternative derivation of the Bianchi identity $\delta_\phi\cRic_\phi=\frac{1}{2}d\cR_\phi$.

\begin{cor}
\label{cor:divE1}
Let $(M^n,g,e^{-\phi}\dvol)$ be a smooth metric measure space and let $\lambda\in\bR$.  Then
\begin{equation}
\label{eqn:divE1}
\delta_\phi\cE_{1,\phi} = -d\csigma_{1,\phi} .
\end{equation}
\end{cor}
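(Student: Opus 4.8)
The plan is to deduce the identity from the invariance of the total weighted $\sigma_1$-curvature functional under pullback by diffeomorphisms, exactly as one derives the contracted Bianchi identity from the diffeomorphism invariance of the Einstein--Hilbert functional (cf.\ \cite[Paragraph~4.10]{Besse}). Since the claimed identity is local and tensorial, I may assume $M$ is compact without boundary (or, equivalently, test only against compactly supported vector fields). The functional $(g,\phi)\mapsto\int_M\csigma_{1,\phi}\,e^{-\phi}\dvol_g$ --- which is $\mW_1$ up to a positive multiplicative constant when $\lambda=\frac{1}{2\tau}$ --- is unchanged under $(g,\phi)\mapsto(\Phi^\ast g,\Phi^\ast\phi)$ for any diffeomorphism $\Phi$ of $M$, because $\csigma_{1,\phi}$ is a natural Riemannian scalar built from $g$, $\phi$ and the constant $\lambda$, while $e^{-\phi}\dvol_g$ is the associated natural measure. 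Differentiating along the flow of an arbitrary vector field $X$ therefore gives $\mW_1^\bullet=0$ for the variation with $g^\bullet=h=\mL_X g$ and $\phi^\bullet=\psi=\mL_X\phi=\lp\nabla\phi,X\rp$.

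Next I would feed this variation into the first-variation formula of Proposition~\ref{prop:first_variation1}. Using $\tr_g(\mL_X g)=2\divsymb X$, the combination appearing there becomes $\psi-\frac{1}{2}\tr h=\lp\nabla\phi,X\rp-\divsymb X=-\divsymb_\phi X$, where $\divsymb_\phi X:=\divsymb X-\lp\nabla\phi,X\rp$ is the weighted divergence of $X$. Hence, after dividing out the positive constant $(4\pi\tau)^{-n/2}$ and the factor $\tau$,
\[ 0=\int_M\left[\tfrac{1}{2}\lp\cE_{1,\phi},\mL_X g\rp+\left(\csigma_{1,\phi}-\tfrac{1}{2\tau}\right)\divsymb_\phi X\right]e^{-\phi}\dvol . \]
Now I would integrate by parts twice with respect to $e^{-\phi}\dvol$: since $\mL_X g=2\,\delta^\ast X^\flat$ and $\delta_\phi$ is minus the formal adjoint of $\nabla$ (equivalently of $\delta^\ast$) with respect to $e^{-\phi}\dvol$, one has $\int_M\lp\cE_{1,\phi},\mL_X g\rp e^{-\phi}\dvol=-2\int_M\lp\delta_\phi\cE_{1,\phi},X^\flat\rp e^{-\phi}\dvol$; and $\int_M f\,\divsymb_\phi X\,e^{-\phi}\dvol=-\int_M\lp\nabla f,X\rp e^{-\phi}\dvol$ applied to $f=\csigma_{1,\phi}-\frac{1}{2\tau}$, whose differential is $d\csigma_{1,\phi}$ since $\tau$ is constant. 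Collecting terms yields $0=\int_M\lp\delta_\phi\cE_{1,\phi}+d\csigma_{1,\phi},X^\flat\rp e^{-\phi}\dvol$ for every $X$, so $\delta_\phi\cE_{1,\phi}=-d\csigma_{1,\phi}$. The same argument run with the functional $\int_M\csigma_{1,\phi}e^{-\phi}\dvol$ for an arbitrary parameter $\lambda\in\bR$ (replacing $\frac{1}{2\tau}$ by $\lambda$ throughout) gives the statement for all parameters.

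There is essentially no serious obstacle: the only points demanding care are the sign conventions, namely that $\psi-\frac{1}{2}\tr h$ equals \emph{minus} the weighted divergence of $X$ and that $\delta_\phi$ is \emph{minus} the $e^{-\phi}\dvol$-adjoint of $\nabla$, so that the two integration-by-parts steps contribute the signs that combine to $-d\csigma_{1,\phi}$. As a consistency check one may observe that $\cE_{1,\phi}=\cT_{1,\phi}-\csigma_{1,\phi}g=(\csigma_{1,\phi}g-\cRic_\phi)-\csigma_{1,\phi}g=-\cRic_\phi$, so the corollary is precisely the weighted Bianchi identity $\delta_\phi\cRic_\phi=\frac{1}{2}d\cR_\phi$ recalled in Section~\ref{sec:smms}; the content is that the variational computation of Proposition~\ref{prop:first_variation1} reproduces that identity, which is why it serves as the template for the higher-$k$ divergence formulae of Section~\ref{sec:divergence}.
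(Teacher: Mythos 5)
Your argument is exactly the paper's: diffeomorphism invariance of the total weighted $\sigma_1$-curvature functional, the first-variation formula of Proposition~\ref{prop:first_variation1} applied to $g^\bullet=\mL_Xg$, $\phi^\bullet=X\phi$, and a single integration by parts against an arbitrary compactly supported $X$. The extra care you take with signs and the observation that $\cE_{1,\phi}=-\cRic_\phi$, so the corollary is the weighted Bianchi identity, are correct and consistent with Section~\ref{sec:smms}.
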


\begin{proof}

Let $X$ be a compactly-supported vector field on $M$ and let $\xi_t$ be the one-parameter family of diffeomorphisms generated by $X$.  Let $g_t=\xi_t^\ast g$ and $\phi_t=\xi_t^\ast\phi$, so that $g_t^\bullet=L_Xg$ and $\phi_t^\bullet=X\phi$.  Since $\mW_1$ is invariant under pullback by diffeomorphisms we have that $\mW_1^\bullet=0$ and hence, by Proposition~\ref{prop:first_variation1},
\[ 0 = \int_M \bigg[ \frac{\tau}{2}\left\lp\cE_{1,\phi},L_Xg\right\rp + \left(\tau\csigma_{1,\phi} - \frac{1}{2}\right)\delta_\phi X \bigg] (4\pi\tau)^{-\frac{n}{2}}e^{-\phi}\dvol . \]
Since $X$ is arbitrary, integration by parts yields the desired result.
\end{proof}

The correct analogue of the Yamabe Problem in the weighted setting is to construct critical points of the total weighted $\sigma_1$-curvature functional within the class $\mC_1(g)$ of unit-volume measures-with-scale on $(M^n,g)$ defined in~\eqref{eqn:conf_1}.  Proposition~\ref{prop:first_variation1} allows us to characterize these critical points.

\begin{cor}
\label{cor:critW1}
Let $(M^n,g)$ be a compact Riemannian manifold.  Then $(\phi,\tau)\in\mC_1(g)$ is a critical point of $\mW_1\colon\mC_1(g)\to\bR$ if and only if there is a constant $c\in\bR$ such that
\begin{align}
\label{eqn:critW1_el} \csigma_{1,\phi} - \frac{1}{2\tau} & = c , \\
\label{eqn:critW1_tr} \int_M \tr\cRic_\phi\,e^{-\phi}\dvol & = 0 ,
\end{align}
where $\csigma_{1,\phi}$ and $\cRic_\phi$ are given in terms of the manifold with density $(M^n,g,e^{-\phi}\dvol)$ and the parameter $\lambda=\frac{1}{2\tau}$.
\end{cor}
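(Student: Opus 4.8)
The plan is to derive the Euler--Lagrange equations for the constrained variational problem directly from the first variation formula in Proposition~\ref{prop:first_variation1}. First I would fix the metric $g$ and consider an arbitrary smooth variation $(\phi_t,\tau_t)\in\mC_1(g)$ through $(\phi,\tau)$, writing $\phi^\bullet=\psi$ and $\tau^\bullet=\dot\tau$. The constraint $\int_M(4\pi\tau)^{-n/2}e^{-\phi}\dvol=1$ differentiates to
\[
\int_M\left(\psi+\frac{n}{2\tau}\dot\tau\right)(4\pi\tau)^{-\frac{n}{2}}e^{-\phi}\dvol = 0,
\]
which is the linear constraint that admissible infinitesimal variations $(\psi,\dot\tau)$ must satisfy.

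Next I would compute $\mW_1^\bullet$ along such a variation. Since $g$ is fixed we set $h=0$ in Proposition~\ref{prop:first_variation1}, but we must also account for the explicit $\tau$-dependence in the integrand $\tau\,\csigma_{1,\phi}(4\pi\tau)^{-n/2}$ and in the parameter $\lambda=\frac{1}{2\tau}$ hidden inside $\csigma_{1,\phi}$. Using the scale invariance recorded in Lemma~\ref{lem:mW_scale_invariance} (equivalently, computing the $\tau$-derivative directly), one collects all terms into an expression of the form $\int_M\bigl[A\,\psi+B\,\dot\tau\bigr](4\pi\tau)^{-n/2}e^{-\phi}\dvol$ for suitable functions $A$ and constant $B$ built from $\csigma_{1,\phi}$ and $\frac{1}{2\tau}$; the $\psi$-coefficient should be a multiple of $\csigma_{1,\phi}-\frac{1}{2\tau}$ (up to its mean value), matching the structure already visible in Proposition~\ref{prop:first_variation1} where $\tau\csigma_{1,\phi}-\frac12$ appears.

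Then I would apply the Lagrange-multiplier principle: $(\phi,\tau)$ is critical for $\mW_1|_{\mC_1(g)}$ if and only if there is a constant $c$ with $\mW_1^\bullet = c\int_M(\psi+\frac{n}{2\tau}\dot\tau)(4\pi\tau)^{-n/2}e^{-\phi}\dvol$ for \emph{all} $(\psi,\dot\tau)$. Testing against variations with $\dot\tau=0$ and $\psi$ arbitrary (subject only to being a smooth function) forces the pointwise equation $\csigma_{1,\phi}-\frac{1}{2\tau}=c$, which is~\eqref{eqn:critW1_el}; testing against a variation with $\psi\equiv 0$ and $\dot\tau\neq 0$ then yields a single scalar identity, and integrating the already-established equation~\eqref{eqn:critW1_el} against $e^{-\phi}\dvol$ and using $2\csigma_{1,\phi}=\cR_\phi=2\tr\cRic_\phi+2\cY_\phi$ together with the normalization of the shrinking Gaussians converts this into the trace condition~\eqref{eqn:critW1_tr}. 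Conversely, if~\eqref{eqn:critW1_el} and~\eqref{eqn:critW1_tr} hold, substituting back shows $\mW_1^\bullet$ is proportional to the constraint derivative, so $(\phi,\tau)$ is critical.

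The main obstacle I anticipate is bookkeeping the $\tau$-variation cleanly: because $\lambda=\frac{1}{2\tau}$ depends on $\tau$, the derivative of $\csigma_{1,\phi}$ in $\tau$ contributes a term (from $\frac{\partial}{\partial\lambda}\csigma_{1,\phi}=\phi-n$), and one must check that all such terms organize so that the $\dot\tau$-test reduces precisely to the integrated form of~\eqref{eqn:critW1_el} rather than to an independent condition. Exploiting Lemma~\ref{lem:mW_scale_invariance} to replace the $\tau$-variation by a metric rescaling $g_t=e^{2t\,\mathrm{const}}g$ (so $h$ proportional to $g$) is the slick way to sidestep most of this: it reduces the $\dot\tau$-direction to the already-computed $h=\frac{2}{n}(\tr h)g$ case in Proposition~\ref{prop:first_variation1}, at which point the trace condition~\eqref{eqn:critW1_tr} emerges automatically from the $\psi-\frac12\tr h$ combination.
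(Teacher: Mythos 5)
Correct, and your final paragraph is precisely the paper's proof: Lemma~\ref{lem:mW_scale_invariance} converts the $\tau$-direction into a constant rescaling $h\propto g$, and Proposition~\ref{prop:first_variation1} then yields~\eqref{eqn:critW1_el} from the $\psi$-coefficient and~\eqref{eqn:critW1_tr} from the $h$-coefficient, using $\cE_{1,\phi}=-\cRic_\phi$. One caution on your middle paragraph, though: equation~\eqref{eqn:critW1_tr} is \emph{not} recoverable by integrating~\eqref{eqn:critW1_el} and invoking a Gaussian normalization --- since $\csigma_{1,\phi}=\tr\cRic_\phi+\cY_\phi$, the integral of~\eqref{eqn:critW1_el} leaves behind an uncontrolled $\int\cY_\phi$ term, so~\eqref{eqn:critW1_tr} really is the independent scalar constraint produced by the $\dot\tau$-test (equivalently, the $h\propto g$ variation), exactly as your last paragraph says.
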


\begin{proof}

By Lemma~\ref{lem:mW_scale_invariance}, variations of $(\phi,\tau)$ within $\mC_1(g)$ are equivalent to variations of $(g,\phi)$ such that $h:=g^\bullet$ satisfies $h=cg$ and the constraint
\begin{equation}
\label{eqn:constraint}
\int_M \left(\psi - \frac{1}{2}\tr h\right)(4\pi\tau)^{-\frac{n}{2}}e^{-\phi}\dvol = 0
\end{equation}
holds.  The conclusion then follows immediately from Proposition~\ref{prop:first_variation1}.
\end{proof}

\subsection{The first variation of the total weighted $\sigma_2$-curvature}
\label{subsec:first_variation/2}

The computation of the first variation of $\mW_2$ relies on two lemmas.  The first lemma computes the linearization of the Bakry-\'Emery Ricci tensor (cf.\ \cite{CaoZhu2010}).

\begin{lem}
\label{lem:simple_variation2}
Let $(M^n,g,e^{-\phi}\dvol)$ be a smooth metric measure space and let $\lambda\in\bR$.  Given a variation $(g_t,\phi_t)$ with $g^\bullet=h$ and $\phi^\bullet=\psi$, it holds that
\begin{equation}
\label{eqn:simple_variation2}
\cRic_\phi^\bullet = -\frac{1}{2}\Delta_\phi h - \frac{1}{2}\cRic_\phi \hash h - \Rm\cdot h + \frac{1}{2}L_{\delta_\phi h}g + \nabla^2\left(\psi-\frac{1}{2}\tr h\right)
\end{equation}
for $\hash$ the natural action of $\End(TM)$ on $TM$ extended to $S^2T^\ast M$ as a derivation; i.e.
\[ \left(\cRic_\phi\hash h\right)(X,Y) := -\sum_{i=1}^n \left(\cRic_\phi(E_i,X)h(E_i,Y) + \cRic_\phi(E_i,Y)h(X,E_i)\right) \]
for all $p\in M$ and all $X,Y\in T_pM$ with $\{E_1,\dotsc,E_n\}$ an orthonormal basis of $T_pM$.
\end{lem}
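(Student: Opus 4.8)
The plan is to differentiate $\cRic_\phi = \Ric + \nabla^2\phi - \lambda g$ term by term and then reassemble the result in terms of the weighted operators $\Delta_\phi$ and $\delta_\phi$. Since $\lambda$ is constant, $\cRic_\phi^\bullet = \Ric^\bullet + (\nabla^2\phi)^\bullet - \lambda h$; moreover $g\hash h = -2h$, so for $\Ric_\phi := \Ric + \nabla^2\phi$ one has $-\tfrac12\Ric_\phi\hash h - \lambda h = -\tfrac12\cRic_\phi\hash h$. Hence it suffices to prove~\eqref{eqn:simple_variation2} in the case $\lambda = 0$, i.e.\ the same identity with $\cRic_\phi$ replaced throughout by $\Ric_\phi$ and the explicit $\lambda$-terms dropped.

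First I would recall the classical linearization of the Ricci tensor (see, e.g., \cite[Section~1.K]{Besse}), which in the sign conventions of this paper takes the form
\[ \Ric^\bullet = -\tfrac12\Delta h - \Rm\cdot h - \tfrac12\Ric\hash h - \tfrac12\nabla^2(\tr h) + \tfrac12 L_{\delta h}g , \]
where $\Delta$ is the rough Laplacian on symmetric $2$-tensors and $\delta h$ is its divergence; obtaining this is a matter of rewriting the Lichnerowicz Laplacian as $\Delta_L h = \Delta h + 2\,\Rm\cdot h + \Ric\hash h$ and noting that $\delta$ agrees with $\delta_\phi$ when $\phi$ is constant. Next I would compute $(\nabla^2\phi)^\bullet$ from the variation $(\Gamma^k_{ij})^\bullet = \tfrac12 g^{kl}(\nabla_i h_{jl} + \nabla_j h_{il} - \nabla_l h_{ij})$ of the Christoffel symbols, which gives, in indices,
\[ (\nabla^2\phi)^\bullet_{ij} = \nabla_i\nabla_j\psi + \tfrac12(\nabla_{\nabla\phi}h)_{ij} - \tfrac12(\nabla_i h_{jl} + \nabla_j h_{il})\nabla^l\phi . \]

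Adding these and regrouping is the heart of the matter. The term $-\tfrac12\Delta h$ from $\Ric^\bullet$ combines with $+\tfrac12\nabla_{\nabla\phi}h$ from $(\nabla^2\phi)^\bullet$ into $-\tfrac12\Delta_\phi h$. Passing from $\tfrac12 L_{\delta h}g$ to $\tfrac12 L_{\delta_\phi h}g$ by means of $(\delta_\phi h)_j = (\delta h)_j - h_{jl}\nabla^l\phi$ and the Leibniz rule produces exactly the terms $+\tfrac12(\nabla_i h_{jl} + \nabla_j h_{il})\nabla^l\phi$, which cancel the last term of $(\nabla^2\phi)^\bullet$, together with $-\tfrac12(\nabla^2\phi)\hash h$, which merges with $-\tfrac12\Ric\hash h$ into $-\tfrac12\Ric_\phi\hash h$; the Hessian terms combine as $\nabla^2\psi - \tfrac12\nabla^2(\tr h) = \nabla^2(\psi - \tfrac12\tr h)$. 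This establishes~\eqref{eqn:simple_variation2} for $\lambda = 0$, and restoring $-\lambda h$ and absorbing it into $\cRic_\phi\hash h$ as above completes the proof.

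I expect the only genuine difficulty to be convention bookkeeping — aligning the classical Ricci-linearization and Lichnerowicz-Laplacian formulas with this paper's definitions of the curvature action $h\mapsto\Rm\cdot h$, of $\hash$, and of the weighted divergence $\delta_\phi$ — together with the one mildly delicate cancellation of the first-order terms $(\nabla_i h_{jl})\nabla^l\phi$ between $(\nabla^2\phi)^\bullet$ and the passage from $\delta$ to $\delta_\phi$. All remaining manipulations are routine.
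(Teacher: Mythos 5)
Your proof is correct and takes the same route as the paper's one-line proof, which simply cites the formulas for $\Ric^\bullet$ and the variation of the Christoffel symbols from~\cite[Section~1.K]{Besse}; you fill in the bookkeeping the paper leaves implicit. The reduction to $\lambda=0$ via $g\hash h=-2h$, the translation of Besse's $\Ric'$ formula into the paper's sign conventions for $\Delta$, $\hash$, $\Rm\cdot$ and $\delta$, the linearization of $\nabla^2\phi$, and the cancellation of the $(\nabla_ih_{jl}+\nabla_jh_{il})\nabla^l\phi$ terms against the discrepancy between $L_{\delta h}g$ and $L_{\delta_\phi h}g$ (leaving behind $-\tfrac12\nabla^2\phi\hash h$) all check out.
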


\begin{proof}

This follows immediately from the formulae for $\Ric^\bullet$ and $\nabla^\bullet$ as can be found in~\cite[Section~1.K]{Besse}.
\end{proof}

The second lemma, which is used primarily to write the gradient of $\mW_2$ in terms of the weighted Bach tensor, is the following weighted Weitzenb\"ock formula for sections of $S^2T^\ast M$.

\begin{lem}
\label{lem:weitzenbock}
Let $(M^n,g,e^{-\phi}\dvol)$ be a manifold with density and let $T$ be a section of $S^2T^\ast M$.  Then
\[ \Delta_\phi T = \delta_\phi dT + \frac{1}{2}L_{\delta_\phi T}g - \frac{1}{2}\Ric_\phi\hash T - \Rm\cdot T . \]
\end{lem}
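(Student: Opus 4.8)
The plan is to verify the identity pointwise by a covariant computation, organized so as to reduce it to the unweighted case ($\phi$ constant) plus a first-order correction. Fix $p\in M$, pick a normal frame $\{E_i\}$ with $\nabla E_i|_p=0$, and evaluate on vector fields parallel at $p$, so that every term becomes a polynomial in $\nabla^2 T,\nabla T,\Rm,\nabla\phi,\nabla^2\phi$ at $p$. Writing $\Delta_\phi T=\Delta T-\nabla_{\nabla\phi}T$, $\delta_\phi(dT)=\delta(dT)-\iota_{\nabla\phi}(dT)$, $\delta_\phi T=\delta T-\iota_{\nabla\phi}T$ and $\Ric_\phi=\Ric+\nabla^2\phi$, the asserted identity splits into the unweighted Weitzenb\"ock identity
\[ \Delta T = \delta(dT) + \tfrac12 L_{\delta T}g - \tfrac12\Ric\hash T - \Rm\cdot T \]
together with an identity that is first order in $T$ and linear in $\nabla\phi,\nabla^2\phi$.

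I would first establish the unweighted identity. Expanding $(\delta dT)(Y,Z)=\sum_i\nabla_{E_i}\bigl(dT(E_i,Y,Z)\bigr)$ at $p$ and using $dT(X,Y,Z)=(\nabla_X T)(Y,Z)-(\nabla_Y T)(X,Z)$ gives $(\delta dT)(Y,Z)=(\Delta T)(Y,Z)-\sum_i(\nabla^2_{E_i,Y}T)(E_i,Z)$. Commuting the two derivatives with the Ricci identity rewrites $\sum_i(\nabla^2_{E_i,Y}T)(E_i,Z)$ as $(\nabla_Y\delta T)(Z)$ plus a contraction of the curvature action of $R$ on $T$; the latter yields (using the convention $\Rm\cdot g=\Ric$) a Ricci term together with $\Rm\cdot T$. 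Since $\Delta T$, $L_{\delta T}g$, $\Ric\hash T$ and $\Rm\cdot T$ are symmetric while $\delta(dT)$ is not, one symmetrizes in $(Y,Z)$: the term $(\nabla_Y\delta T)(Z)$ becomes $\tfrac12 L_{\delta T}g$ and the unsymmetrized Ricci term becomes $\tfrac12\Ric\hash T$, which gives the displayed formula. (This identity may also be quoted from a standard reference.)

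It then remains to check the first-order correction. After raising the index on the one-form $\iota_{\nabla\phi}T$, and before symmetrizing in $(Y,Z)$, this correction reads
\[ -\nabla_{\nabla\phi}T = -\iota_{\nabla\phi}(dT) - \tfrac12 L_{(\iota_{\nabla\phi}T)^\sharp}g - \tfrac12(\nabla^2\phi)\hash T . \]
This is a product-rule computation: expanding $\iota_{\nabla\phi}(dT)(Y,Z)=(\nabla_{\nabla\phi}T)(Y,Z)-(\nabla_Y T)(\nabla\phi,Z)$ and $L_{(\iota_{\nabla\phi}T)^\sharp}g(Y,Z)=(\nabla_Y T)(\nabla\phi,Z)+(\nabla_Z T)(\nabla\phi,Y)+T(\nabla_Y\nabla\phi,Z)+T(\nabla_Z\nabla\phi,Y)$ --- where $\nabla_Y\nabla\phi$ is the Hessian endomorphism applied to $Y$ --- all the $\nabla T$-terms and Hessian terms cancel against $-\tfrac12(\nabla^2\phi)\hash T$ after symmetrization, leaving $-\nabla_{\nabla\phi}T$.

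The only real obstacle is bookkeeping: fixing once and for all and then using consistently the sign conventions for $\Rm\cdot$, for the derivation action $\hash$, for the Ricci identity, and for the geometer's Laplacian $\Delta$, and keeping in mind that $dT$ --- hence $\delta_\phi(dT)$ and $\iota_{\nabla\phi}(dT)$ --- is not symmetric in its two free slots, so that the final identity is one of symmetric $2$-tensors obtained only after symmetrization. There is no analytic ingredient; the lemma is a curvature-commutator identity followed by a first-order check.
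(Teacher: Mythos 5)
Your proposal is correct and follows essentially the same route as the paper: reduce to the unweighted Weitzenb\"ock formula for symmetric two-tensors (the paper simply quotes Bourguignon, Proposition~4.1) and verify the first-order correction $-\iota_{\nabla\phi}dT - \tfrac{1}{2}L_{(\iota_{\nabla\phi}T)^\sharp}g - \tfrac{1}{2}\nabla^2\phi\hash T = -\nabla_{\nabla\phi}T$ by a direct product-rule computation. Your explicit remark that the $\iota_{\nabla\phi}(dT)$ term must be symmetrized in its two free slots is a useful clarification that the paper's two-line proof leaves implicit.
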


\begin{proof}

Symmetrizing~\cite[Proposition~4.1]{Bourguignon1981} shows that
\[ \Delta T = \delta dT + \frac{1}{2}L_{\delta T}g - \frac{1}{2}\Ric\hash T - \Rm\cdot T . \]
On the other hand, it is readily computed that
\[ \delta_\phi dT + \frac{1}{2}L_{\delta_\phi T}g = \delta dT + \frac{1}{2}L_{\delta T}g - \nabla_{\nabla\phi}T + \frac{1}{2}\nabla^2\phi\hash T . \]
Combining these two displays yields the desired result.
\end{proof}

We are ready to derive the first variation of the total weighted $\sigma_2$-curvature.

\begin{prop}
\label{prop:first_variation2}
Let $(M^n,g,e^{-\phi}\dvol)$ be a smooth metric measure space and let $\lambda\in\bR$.  Given a variation $(g_t,\phi_t)$ with $g^\bullet=h$ and $\phi^\bullet=\psi$, it holds that
\begin{equation}
\label{eqn:first_variation2}
\begin{split}
\mW_2^\bullet & = \int_M \bigg[\frac{\tau^2}{2}\left\lp \cB_\phi + \cE_{2,\phi} - \frac{1}{2\tau}\cE_{1,\phi}, h\right\rp \\
& \quad - \left(\tau^2\csigma_{2,\phi} - \frac{\tau}{2}\csigma_{1,\phi}\right)\left(\psi-\frac{1}{2}\tr h\right)\bigg](4\pi\tau)^{-\frac{n}{2}}e^{-\phi}\dvol ,
\end{split}
\end{equation}
where $\cB_\phi$ is the weighted Bach tensor~\eqref{eqn:weighted_bach} and $\cE_{k,\phi}$ are the trace-adjusted weighted Newton tensors~\eqref{eqn:Ek}.
\end{prop}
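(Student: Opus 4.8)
The plan is to differentiate the integrand of $\mW_2$ term by term using the chain rule, pushing all derivatives onto the variation fields $h=g^\bullet$ and $\psi=\phi^\bullet$, and then repeatedly integrating by parts (with respect to $e^{-\phi}\dvol$) until the result is expressed as a pairing $\langle\,\cdot\,,h\rangle$ plus a scalar multiple of $\psi-\frac12\tr h$. First I would write $\csigma_{2,\phi}=\sigma_2^\infty(\cY_\phi;\cRic_\phi)$ and use the fact (from Section~\ref{sec:polynomial}, in particular Lemma~\ref{lem:contr_newton} and the recursive definition) that the linearization of $\sigma_2^\infty$ contracts the linearization of $\cRic_\phi$ against the weighted Newton tensor $\cT_{1,\phi}$ and the linearization of $\cY_\phi$ against $\sigma_1^\infty$; schematically $\csigma_{2,\phi}^\bullet = \langle \cT_{1,\phi},\cRic_\phi^\bullet\rangle + \cY_\phi^\bullet\,\sigma_1^\infty$ up to the bookkeeping of the $\mu_0$-dependence. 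Combined with the variation of the volume form $(\dvol)^\bullet = \frac12(\tr h)\dvol$ and the relation $\frac12\cR_\phi = \tr\cRic_\phi + \cY_\phi$, this reduces everything to the already-computed linearizations $\cRic_\phi^\bullet$ (Lemma~\ref{lem:simple_variation2}), $\cR_\phi^\bullet$ (Lemma~\ref{lem:simple_variation1}), and hence $\cY_\phi^\bullet$.

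Next I would substitute Lemma~\ref{lem:simple_variation2} for $\cRic_\phi^\bullet$ into $\langle\cT_{1,\phi},\cRic_\phi^\bullet\rangle$. Each of the five terms on the right-hand side of~\eqref{eqn:simple_variation2} gets integrated by parts against $\cT_{1,\phi}$: the $-\frac12\Delta_\phi h$ term moves the weighted Laplacian onto $\cT_{1,\phi}$, the $\frac12 L_{\delta_\phi h}g$ term becomes a divergence pairing $\langle\delta_\phi\cT_{1,\phi},\delta_\phi h\rangle$ and then integrates by parts again to produce $\langle \nabla(\delta_\phi\cT_{1,\phi}), h\rangle$-type terms, and the Hessian term $\nabla^2(\psi-\frac12\tr h)$ integrates by parts twice to put $\delta_\phi^2\cT_{1,\phi}=\delta_\phi d\cT_{1,\phi}+\ldots$ against $\psi-\frac12\tr h$. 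The key algebraic point is that $\cT_{1,\phi}=\csigma_{1,\phi}g-\cRic_\phi$, so $\delta_\phi\cT_{1,\phi}$ is computed from the Bianchi-type identity $\delta_\phi\cRic_\phi=\frac12 d\cR_\phi = d\csigma_{1,\phi}$; this makes the divergence terms collapse rather than proliferate. The curvature-coupling term $-\Rm\cdot h$ paired with $\cT_{1,\phi}$, together with the $-\frac12\cRic_\phi\hash h$ term, is where the weighted Weitzenb\"ock formula of Lemma~\ref{lem:weitzenbock} enters: applying it to $T=\cRic_\phi$ converts the combination $\delta_\phi d\cRic_\phi$ (which appears after the integrations by parts) plus the $\Rm\cdot\cRic_\phi$ and $\Ric_\phi\hash\cRic_\phi$ pieces into exactly $\Delta_\phi\cRic_\phi$ plus lower-order Lie-derivative terms, and reassembling gives the weighted Bach tensor $\cB_\phi=\delta_\phi d\cRic_\phi+\Rm\cdot\cRic_\phi$ exactly as defined in~\eqref{eqn:weighted_bach}.

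The remaining contributions — the $\cY_\phi^\bullet\,\sigma_{1,\phi}$ piece, the variation of $(4\pi\tau)^{-n/2}e^{-\phi}\dvol$, and the explicit $-\frac{1}{2\tau}$ shift coming from $\lambda=\frac1{2\tau}$ in the $\mW_2$ normalization — should be organized so that the algebraic Newton-tensor identities (Definition~\ref{defn:Tk}, Definition~\ref{defn:Ek}, and Lemma~\ref{lem:contr_newton}) let one recognize $\cE_{2,\phi}-\frac1{2\tau}\cE_{1,\phi}$ as the non-Bach part of the $h$-coefficient and $\tau^2\csigma_{2,\phi}-\frac\tau2\csigma_{1,\phi}$ as the coefficient of $\psi-\frac12\tr h$. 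I would check the scalar coefficient by a direct (if tedious) trace computation, using Lemma~\ref{lem:mW_scale_invariance} to justify restricting to variations of $(g,\phi)$ alone. \textbf{The main obstacle} I expect is the bookkeeping in the second integration by parts of the Hessian and Lie-derivative terms: one must verify that all the $\delta_\phi\cT_{1,\phi}$ and $\delta_\phi^2\cT_{1,\phi}$ terms recombine with the $\cY_\phi^\bullet$ contribution to give precisely $\cB_\phi+\cE_{2,\phi}-\frac1{2\tau}\cE_{1,\phi}$ with no leftover, and this is exactly the place where the weighted Bianchi identity and the weighted Weitzenb\"ock formula must be invoked in the right order — a sign error or a missed $\nabla\phi$-correction term there would break the identification with the weighted Bach tensor.
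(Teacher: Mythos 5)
Your proposal is essentially the paper's proof: both start from the linearizations of $\cR_\phi$ and $\cRic_\phi$ (Lemmas~\ref{lem:simple_variation1} and~\ref{lem:simple_variation2}), integrate by parts against the measure $e^{-\phi}\dvol$, invoke the weighted Bianchi identity $\delta_\phi\cRic_\phi=\tfrac12 d\cR_\phi$, and finish by applying the weighted Weitzenb\"ock formula (Lemma~\ref{lem:weitzenbock}) to $\cRic_\phi$ to surface $\cB_\phi$. The only substantive simplification the paper makes, which dissolves the bookkeeping you flag as the main obstacle, is to bypass the Newton-tensor expansion of $\csigma_{2,\phi}^\bullet$ altogether and differentiate the explicit scalar identity $\csigma_{2,\phi}=\tfrac18\cR_\phi^2-\tfrac12\lv\cRic_\phi\rv^2$; this automatically accounts for the metric dependence hidden in the pairing $\lp\cT_{1,\phi},\cRic_\phi^\bullet\rp$, and after one pass of integration by parts the coefficient of $\psi-\tfrac12\tr h$ is simply $-(\tau^2\csigma_{2,\phi}-\tfrac\tau2\csigma_{1,\phi})$ because the two $\Delta_\phi\cR_\phi$ contributions cancel via the Bianchi identity, rather than by tracking $\delta_\phi^2\cT_{1,\phi}$.
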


\begin{proof}

By definition,
\[ \csigma_{2,\phi} = \frac{1}{8}\cR_\phi^2 - \frac{1}{2}\left|\cRic_\phi\right|^2 . \]
Applying Lemma~\ref{lem:simple_variation1} and Lemma~\ref{lem:simple_variation2} yields
\begin{align*}
\csigma_{2,\phi}^\bullet & = -\frac{1}{4}\left\lp\cR_\phi\cRic_\phi,h\right\rp + \frac{1}{4}\cR_\phi\delta_\phi^2 h + \frac{1}{2}\cR_\phi\left(\Delta_\phi + \frac{1}{2\tau}\right)\left(\psi-\frac{1}{2}\tr h\right) \\
& \quad + \frac{1}{2}\left\lp\cRic_\phi,\Delta_\phi h + 2\Rm\cdot h - L_{\delta_\phi h}g - 2\nabla^2\left(\psi-\frac{1}{2}\tr h\right)\right\rp .
\end{align*}
Multiplying by $\tau^2$ and integrating with respect to $(4\pi\tau)^{-\frac{n}{2}}e^{-\phi}\dvol$ then yields
\begin{align*}
\mW_2^\bullet & = \int_M \bigg[\tau^2\left\lp \frac{1}{2}\Delta_\phi\cRic_\phi - \frac{1}{4}\nabla^2\cR_\phi + \Rm\cdot\cRic_\phi - \frac{1}{4}\cR_\phi\cRic_\phi,h \right\rp \\
& \qquad - \left(\tau^2\csigma_{2,\phi} - \frac{\tau}{2}\csigma_{1,\phi}\right)\left(\psi-\frac{1}{2}\tr h\right) \bigg] (4\pi\tau)^{-\frac{n}{2}}e^{-\phi}\dvol .
\end{align*}
The final conclusion~\eqref{eqn:first_variation2} is an immediate consequence of Lemma~\ref{lem:weitzenbock}.
\end{proof}

By arguing as in the proof of Corollary~\ref{cor:divE1} we arrive at the following divergence formula.  Note that this identity can also be deduced from the Ricci identity and the Bianchi identity $\delta_\phi\cRic_\phi=\frac{1}{2}d\cR_\phi$.

\begin{cor}
\label{cor:divE2}
Let $(M^n,g,e^{-\phi}\dvol)$ be a smooth metric measure space and let $\lambda\in\bR$.  Then
\begin{equation}
\label{eqn:divE2}
\delta_\phi\left(\cB_\phi + \cE_{2,\phi} - \lambda\cE_{1,\phi}\right) = -d\left(\csigma_{2,\phi} - \lambda\csigma_{1,\phi}\right) .
\end{equation}
\end{cor}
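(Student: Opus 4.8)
The plan is to mimic exactly the diffeomorphism-invariance argument used to prove Corollary~\ref{cor:divE1}, now applied to the functional $\mW_2$ whose first variation has just been computed in Proposition~\ref{prop:first_variation2}. The key observation is that $\mW_2(g,\phi,\tau)$ is invariant under pullback by diffeomorphisms, since the weighted $\sigma_2$-curvature and the weighted measure $e^{-\phi}\dvol_g$ are natural. So if $X$ is a compactly supported vector field on $M$, $\xi_t$ the flow it generates, and we set $g_t=\xi_t^\ast g$, $\phi_t=\xi_t^\ast\phi$ with $\tau$ held fixed, then $g^\bullet = L_X g$, $\phi^\bullet = X\phi$, and $\mW_2^\bullet = 0$.

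First I would record the two elementary facts needed to evaluate the right-hand side of~\eqref{eqn:first_variation2} in this case: for $h=L_Xg$ one has $\tfrac12\langle S, L_Xg\rangle = -\langle \delta_\phi S, X^\flat\rangle$ after integration against $e^{-\phi}\dvol$ (this is the defining property of $\delta_\phi$ as the weighted formal adjoint of the symmetrized covariant derivative, exactly as used in Corollary~\ref{cor:divE1}), applied to the symmetric $2$-tensor $S := \cB_\phi + \cE_{2,\phi} - \tfrac{1}{2\tau}\cE_{1,\phi}$; and $\psi - \tfrac12\tr h = X\phi - \delta_\phi^\ast$-type expression, namely $X\phi - \tfrac12\tr L_Xg = -\delta_\phi X + \langle X, \nabla\phi\rangle - \langle X,\nabla\phi\rangle$; more carefully, $\tfrac12\tr L_Xg = \divsymb X = \delta^\ast X$ in the unweighted sense, so $\psi-\tfrac12\tr h = X\phi - \divsymb X$, and integrating the coefficient $(\tau^2\csigma_{2,\phi}-\tfrac\tau2\csigma_{1,\phi})$ against this and $e^{-\phi}\dvol$ produces, after integration by parts against the weighted measure, a term $\langle d(\tau^2\csigma_{2,\phi}-\tfrac\tau2\csigma_{1,\phi}), X\rangle$.

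Then substituting $\mW_2^\bullet=0$ and collecting terms yields
\[
0 = \int_M \left\langle -\tau^2\,\delta_\phi\!\left(\cB_\phi + \cE_{2,\phi} - \tfrac{1}{2\tau}\cE_{1,\phi}\right) - \tau^2\, d\!\left(\csigma_{2,\phi} - \tfrac{1}{2\tau}\csigma_{1,\phi}\right), X\right\rangle (4\pi\tau)^{-\frac n2}e^{-\phi}\dvol
\]
for every compactly supported $X$; since $X$ is arbitrary the integrand vanishes pointwise, and dividing by the nonzero constant $-\tau^2$ and then relabelling $\lambda = \tfrac{1}{2\tau}$ (the identity is polynomial in $\lambda$, so it extends to all $\lambda\in\bR$ — or one simply notes that Proposition~\ref{prop:first_variation2} is stated for arbitrary $\lambda$ up to the harmless $\tau$-weights, which can be scaled away) gives precisely~\eqref{eqn:divE2}. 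I would remark, as the statement itself does, that one can alternatively verify~\eqref{eqn:divE2} by brute force from the Ricci identity together with the Bianchi-type identity $\delta_\phi\cRic_\phi=\tfrac12 d\cR_\phi$ and $\delta_\phi\Rm = d\cRic_\phi$, but the variational route is cleaner.

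The only mild obstacle is bookkeeping: one must be careful that the coefficient $\tau^2\csigma_{2,\phi}-\tfrac\tau2\csigma_{1,\phi}$ multiplying $(\psi-\tfrac12\tr h)$ in~\eqref{eqn:first_variation2} integrates by parts correctly against $e^{-\phi}\dvol$ — i.e.\ that the relevant divergence is the \emph{weighted} divergence $\delta_\phi$, so that $\int (X f)\, e^{-\phi}\dvol = \int f\,(\delta_\phi X)\, e^{-\phi}\dvol$ with no extra boundary or weight terms — and that the $\tau$-dependence is genuinely an overall constant (legitimate because $\tau$ is held fixed in the diffeomorphism variation), so that it can be cleared at the end. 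With the sign conventions fixed by $\Rm\cdot g=\Ric$ and by the definition $\delta_\phi=\delta-\iota_{\nabla\phi}$ as in Section~\ref{sec:smms}, everything lines up to give~\eqref{eqn:divE2} exactly.
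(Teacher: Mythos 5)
Your argument is exactly the diffeomorphism-invariance route the paper itself indicates ("By arguing as in the proof of Corollary~\ref{cor:divE1}\dots"), applied to the first variation formula of Proposition~\ref{prop:first_variation2}, and your final display and conclusion are correct, including the observation that polynomiality in $\lambda$ extends the identity from $\lambda=\tfrac{1}{2\tau}>0$ to all $\lambda\in\bR$. One small sign slip in the commentary: with the paper's convention that $\delta_\phi=\delta-\iota_{\nabla\phi}$ is the \emph{negative} formal adjoint of $\nabla$, the integration-by-parts identity should read $\int (Xf)\,e^{-\phi}\dvol = -\int f\,(\delta_\phi X)\,e^{-\phi}\dvol$, though this does not affect the correctness of your final displayed identity.
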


Note that Corollary~\ref{cor:divE1} and Corollary~\ref{cor:divE2} can be combined to identify $d\csigma_{2,\phi}$ as a weighted divergence.  This motivates the general divergence formulae in Section~\ref{sec:divergence}.

We can also use Proposition~\ref{prop:first_variation2} to prove Theorem~\ref{thm:intro/vary_w2}.

\begin{proof}[Proof of Theorem~\ref{thm:intro/vary_w2}]

Suppose first that $(g,\phi,\tau)$ is a critical point of $\mW_2\colon\mC_1\to\bR$.  Considering variations of the metric only implies, via Proposition~\ref{prop:first_variation2}, that~\eqref{eqn:intro/vary_w2/einstein} holds.  Conversely, if $(g,\phi,\tau)\in\mC_1$ is such that~\eqref{eqn:intro/vary_w2/einstein} holds, then Corollary~\ref{cor:divE2} implies that $\csigma_{2,\phi}-\frac{1}{2\tau}\sigma_{1,\phi}$ is constant.  Since $\int(\psi-\frac{1}{2}\tr h)=0$ for variations within $\mC_1$, it follows from Proposition~\ref{prop:first_variation2} that $(g,\phi,\tau)$ is a critical point of $\mW_2\colon\mC_1\to\bR$.

Consider now the functional $\mW_2\colon\mC_2(g)\to\bR$.  As in the proof of Corollary~\ref{cor:critW1}, it suffices to consider variations of $(g,\phi)$ for which $h=cg$ and the constraint~\eqref{eqn:constraint} holds.  The conclusion is then an immediate consequence of Proposition~\ref{prop:first_variation2}.
\end{proof}
\section{Divergence structure for $\csigma_{k,\phi}$ on flat manifolds}
\label{sec:divergence}

In the conformal case, it is known that on a locally conformally flat manifold the divergence of the tracefree part of the $k$-th Newton tensor is a (nonzero) constant multiple of the derivative of the $\sigma_k$-curvature~\cite{Viaclovsky2000}.  Corollary~\ref{cor:divE1} and Corollary~\ref{cor:divE2} show that this is also true in the weighted setting, as together they imply that
\[ \delta_\phi\cE_{1,\phi} = -d\csigma_{1,\phi} \quad\text{and}\quad \delta_\phi\left(\cB_\phi + \cE_{2,\phi}\right) = -d\csigma_{2,\phi} \]
for all manifolds with density and all parameters $\lambda$.  Since the weighted Bach tensor $\cB_\phi$ vanishes on flat manifolds and since we stated in Section~\ref{sec:defn} that the trace-adjusted weighted $k$-th Newton tensors $\cE_\phi$ should be regarded as the weighted analogues of the tracefree parts of the $k$-th Newton tensors, we do indeed have that $\delta_\phi\cE_{k,\phi}=-d\csigma_{k,\phi}$ for all $k\in\{1,2\}$ on any flat manifold with density and with any parameter $\lambda$.  Unsurprisingly, the restriction $k\in\{1,2\}$ is unnecessary here; it is the purpose of this section to prove this fact.  Indeed, we compute the weighted divergence of the weighted Newton tensors.

To begin, we require the following formula for the derivative of the weighted $\sigma_k$-curvatures.

\begin{lem}
\label{lem:dsigmak}
Let $(M^n,g,e^{-\phi}\dvol)$ be a manifold with density and let $\lambda\in\bR$.  For any $k\in\bN$ it holds that
\begin{equation}
\label{eqn:dsigmak}
\nabla\csigma_{k,\phi} = \csigma_{k-1,\phi}\nabla\cY_\phi + \sum_{j=0}^{k-1}\frac{(-1)^j}{j+1}\csigma_{k-1-j,\phi}\nabla\tr\cRic_\phi^{j+1} .
\end{equation}
\end{lem}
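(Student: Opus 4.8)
The plan is to derive~\eqref{eqn:dsigmak} from the defining recursion for the weighted elementary symmetric polynomials by induction on $k$. The starting point is the pointwise identity
\begin{equation}
\label{eqn:plan_recursion}
k\,\csigma_{k,\phi} = \cY_\phi\,\csigma_{k-1,\phi} + \sum_{j=0}^{k-1}(-1)^j\,\csigma_{k-1-j,\phi}\,\tr\cRic_\phi^{\,j+1},
\end{equation}
which is just Definition~\ref{defn:defn1} applied at each point to the pair $\bigl(\cY_\phi;\cRic_\phi\bigr)$, using that the power sums of the eigenvalues of $\cRic_\phi$ are the traces $\tr\cRic_\phi^{\,j+1}$ (equivalently, \eqref{eqn:plan_recursion} results from differentiating the generating function in Proposition~\ref{prop:generating_function} in $t$ and comparing the coefficient of $t^{k-1}$). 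Since each $\csigma_{\ell,\phi}$ is polynomial in $\cY_\phi$ and the components of $\cRic_\phi$, every term of~\eqref{eqn:plan_recursion} is a smooth function on $M$ and $\nabla$ applies freely.

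The base case $k=1$ is immediate: \eqref{eqn:plan_recursion} reads $\csigma_{1,\phi}=\cY_\phi+\tr\cRic_\phi$, so $\nabla\csigma_{1,\phi}=\csigma_{0,\phi}\nabla\cY_\phi+\csigma_{0,\phi}\nabla\tr\cRic_\phi$ (using $\csigma_{0,\phi}=1$), which is~\eqref{eqn:dsigmak}. For the inductive step, assume~\eqref{eqn:dsigmak} with $k$ replaced by any $\ell\in\{1,\dots,k-1\}$. Applying $\nabla$ to~\eqref{eqn:plan_recursion} expresses $k\,\nabla\csigma_{k,\phi}$ as a sum of four families of terms: $\csigma_{k-1,\phi}\nabla\cY_\phi$; $\cY_\phi\nabla\csigma_{k-1,\phi}$; the terms $(-1)^j\csigma_{k-1-j,\phi}\nabla\tr\cRic_\phi^{\,j+1}$; and the terms $(-1)^j\tr\cRic_\phi^{\,j+1}\nabla\csigma_{k-1-j,\phi}$. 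The first and third families already have the desired shape; into the second and fourth I would substitute the induction hypothesis (the $j=k-1$ term of the fourth family is zero since $\csigma_{0,\phi}$ is constant), and then read off the total coefficients of $\nabla\cY_\phi$ and of each $\nabla\tr\cRic_\phi^{\,m+1}$.

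The key point is that every coefficient that appears is again an instance of~\eqref{eqn:plan_recursion} at a smaller value of $k$. The part of the coefficient of $\nabla\cY_\phi$ coming from the second and fourth families is $\cY_\phi\csigma_{k-2,\phi}+\sum_{j=0}^{k-2}(-1)^j\csigma_{k-2-j,\phi}\tr\cRic_\phi^{\,j+1}=(k-1)\csigma_{k-1,\phi}$ (the $j=k-1$ term drops, as $\csigma_{-1,\phi}=0$), so the total coefficient of $\nabla\cY_\phi$ is $k\,\csigma_{k-1,\phi}$; and the part of the coefficient of $\nabla\tr\cRic_\phi^{\,m+1}$ coming from those families is $\tfrac{(-1)^m}{m+1}\bigl[\cY_\phi\csigma_{k-2-m,\phi}+\sum_{j=0}^{k-2-m}(-1)^j\csigma_{k-2-m-j,\phi}\tr\cRic_\phi^{\,j+1}\bigr]=\tfrac{(-1)^m}{m+1}(k-1-m)\csigma_{k-1-m,\phi}$, which together with the $(-1)^m\csigma_{k-1-m,\phi}$ from the third family gives the total coefficient $\tfrac{k(-1)^m}{m+1}\csigma_{k-1-m,\phi}$. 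Dividing by $k$ gives~\eqref{eqn:dsigmak}. I expect the only genuine work to be this bookkeeping --- tracking the summation ranges and recognizing each partial sum as~\eqref{eqn:plan_recursion} for the right smaller index --- rather than anything conceptual. (One could alternatively avoid the induction by noting, via Proposition~\ref{prop:ordinary_sigmak} and Newton's identities, that $\sigma_k^\infty(\mu_0;\mu)$ is a universal polynomial $F_k(\mu_0,N_1,\dots,N_k)$ in $\mu_0$ and the power sums with $\partial_{\mu_0}F_k=\sigma_{k-1}^\infty$ and $\partial_{N_{m+1}}F_k=\tfrac{(-1)^m}{m+1}\sigma_{k-1-m}^\infty$, then applying the chain rule with $N_i=\tr\cRic_\phi^{\,i}$; I find the direct induction slightly cleaner as it never requires discussing the algebraic dependence among the $N_i$.)
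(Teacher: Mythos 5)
Your proof is correct and follows essentially the same route as the paper: differentiate the defining recursion $k\,\csigma_{k,\phi}=\cY_\phi\csigma_{k-1,\phi}+\sum_{j=0}^{k-1}(-1)^j\csigma_{k-1-j,\phi}\tr\cRic_\phi^{\,j+1}$, substitute the strong induction hypothesis into the $\nabla\csigma_{\ell,\phi}$ terms, and recognize each resulting coefficient as the recursion at a smaller index. The paper's proof carries out exactly this expansion (displaying the six families of terms) and then asks the reader to collect coefficients, which is precisely the bookkeeping you spell out.
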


\begin{proof}

The proof is by strong induction.  It is clear that~\eqref{eqn:dsigmak} holds when $k=1$.  Suppose then that~\eqref{eqn:dsigmak} holds for all $j\in\{1,\dotsc,k-1\}$.  From the definition of the weighted $\sigma_k$-curvature and the inductive hypothesis we see that
\begin{align*}
k\nabla\csigma_{k,\phi} & = \csigma_{k-1,\phi}\nabla\cY_\phi + \csigma_{k-2,\phi}\cY_\phi\nabla\cY_\phi + \sum_{j=0}^{k-2}\frac{(-1)^j}{j+1}\csigma_{k-2-j,\phi}\cY_\phi\nabla\tr\cRic_\phi^{j+1} \\
& \quad + \sum_{j=0}^{k-1}(-1)^j\csigma_{k-1-j,\phi}\nabla\tr\cRic_\phi^{j+1} + \sum_{j=0}^{k-2}(-1)^j\csigma_{k-2-j,\phi}\tr\cRic_\phi^{j+1}\nabla\cY_\phi \\
& \quad + \sum_{j=0}^{k-2}\sum_{\ell=0}^{k-2-j}\frac{(-1)^{j+\ell}}{\ell+1}\csigma_{k-2-j-\ell,\phi}\tr\cRic_\phi^{j+1}\nabla\cRic_\phi^{\ell+1} .
\end{align*}
The desired result then follows by interchanging the order of the last summation and using again the definition of the weighted $\sigma_k$-curvature.
\end{proof}

Viaclovsky showed~\cite{Viaclovsky2000} that the $k$-th Newton tensors are all divergence free on locally conformally flat manifolds, from which the claim about the divergence of their tracefree parts immediately follows.  It is straightforward to compute the divergence of the weighted $k$-th Newton tensors for arbitrary manifolds with density, and in particular to identify the contributions from the Riemann curvature tensor.

\begin{prop}
\label{prop:divTk}
Let $(M^n,g,e^{-\phi}\dvol)$ be a manifold with density and let $\lambda\in\bR$.  For any $k\in\bN$ it holds that
\begin{equation}
\label{eqn:divTk}
\delta_\phi\cT_{k,\phi} + \csigma_{k,\phi}\nabla\phi = \sum_{j=0}^{k-2}(-1)^j\cT_{k-2-j,\phi}\left(\cRic_\phi^{j+1}\cdot d\cRic_\phi\right)
\end{equation}
where, given sections $A,B$ of $S^2T^\ast M$, we define the vector field $A\cdot dB$ by
\[ \left\lp A\cdot dB, X\right\rp = \left\lp A, dB(\cdot,X,\cdot)\right\rp \]
for all vector fields $X$.
\end{prop}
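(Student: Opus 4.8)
The plan is to prove~\eqref{eqn:divTk} by a direct expansion of $\cT_{k,\phi}=\sum_{j=0}^k(-1)^j\csigma_{k-j,\phi}\cRic_\phi^{\,j}$, with Lemma~\ref{lem:dsigmak} supplying the gradient of $\csigma_{k-j,\phi}$; an induction is not strictly needed, although the case $k=1$ (where the right-hand side is empty and the identity reduces to $\delta_\phi\cRic_\phi=d\csigma_{1,\phi}$ together with $\delta_\phi(\csigma_{1,\phi}g)=d\csigma_{1,\phi}-\csigma_{1,\phi}d\phi$) is worth recording first as a warm-up.

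The one genuinely computational input is the identity
\[
\sum_i(\nabla_{E_i}\cRic_\phi)\bigl(\cRic_\phi^{\,b}E_i\bigr)
 = \tfrac{1}{b+1}\nabla\tr\cRic_\phi^{\,b+1} + \cRic_\phi^{\,b}\cdot d\cRic_\phi,
 \qquad b\ge 0,
\]
valid for the weighted Cotton tensor $d\cRic_\phi(X,Y,Z)=\nabla_X\cRic_\phi(Y,Z)-\nabla_Y\cRic_\phi(X,Z)$; it is proved by diagonalizing $\cRic_\phi$ at a point, moving the derivative onto the first slot via $d\cRic_\phi$, and noting that the symmetric remainder is $\tr(\cRic_\phi^{\,b}\nabla\cRic_\phi)=\tfrac{1}{b+1}\nabla\tr\cRic_\phi^{\,b+1}$ while the antisymmetric remainder is exactly $\cRic_\phi^{\,b}\cdot d\cRic_\phi$. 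Combining this with the Leibniz rules $\delta_\phi(fS)=S(\nabla f,\cdot)+f\delta_\phi S$ and $\nabla(\cRic_\phi^{\,j})=\sum_{a=0}^{j-1}\cRic_\phi^{\,a}(\nabla\cRic_\phi)\cRic_\phi^{\,j-1-a}$ gives the closed formula $\delta_\phi\cRic_\phi^{\,j}=\sum_{a=0}^{j-1}\cRic_\phi^{\,a}\bigl(\tfrac{1}{j-a}\nabla\tr\cRic_\phi^{\,j-a}+\cRic_\phi^{\,j-1-a}\cdot d\cRic_\phi\bigr)-\cRic_\phi^{\,j}(\nabla\phi,\cdot)$. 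The $b=0$ case, read against the Bianchi identity $\delta_\phi\cRic_\phi=d\csigma_{1,\phi}$ and $\csigma_{1,\phi}=\cY_\phi+\tr\cRic_\phi$, also yields the relation $\nabla\cY_\phi+\cRic_\phi\nabla\phi=g\cdot d\cRic_\phi$, which I will use to convert a $\nabla\cY_\phi$ term into a curvature term.

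Substituting all of this into $\delta_\phi\cT_{k,\phi}=\sum_j(-1)^j\delta_\phi(\csigma_{k-j,\phi}\cRic_\phi^{\,j})$ and sorting the output, the $\nabla\phi$ terms combine into $-\cT_{k,\phi}(\nabla\phi,\cdot)$, so after adding the $\csigma_{k,\phi}\nabla\phi$ appearing on the left of~\eqref{eqn:divTk} they become $-\cE_{k,\phi}(\nabla\phi,\cdot)=\cT_{k-1,\phi}(\cRic_\phi\nabla\phi)$ by the very definition of $\cE_{k,\phi}$. Applying Lemma~\ref{lem:dsigmak} to $\nabla\csigma_{k-j,\phi}$ produces a $\nabla\cY_\phi$ part that collapses to $\cT_{k-1,\phi}(\nabla\cY_\phi)$ since $\sum_j(-1)^j\csigma_{k-1-j,\phi}\cRic_\phi^{\,j}=\cT_{k-1,\phi}$, plus a sum of $\nabla\tr\cRic_\phi^{\,\ell}$ terms. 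The crucial cancellation is that, after reindexing, these $\nabla\tr\cRic_\phi^{\,\ell}$ terms are killed exactly by the $\nabla\tr\cRic_\phi^{\,\ell}$ terms coming from $\delta_\phi\cRic_\phi^{\,j}$: both families contribute $\pm\tfrac{1}{\ell}\csigma_{k-j-\ell,\phi}\cRic_\phi^{\,j}(\nabla\tr\cRic_\phi^{\,\ell})$ with opposite signs. What survives is $\cT_{k-1,\phi}(\nabla\cY_\phi+\cRic_\phi\nabla\phi)=\cT_{k-1,\phi}(g\cdot d\cRic_\phi)$ together with the $d\cRic_\phi$ terms from $\delta_\phi\cRic_\phi^{\,j}$, i.e.\ the double sum $\sum_{p+\ell\le k-1}(-1)^{p+\ell+1}\csigma_{k-1-p-\ell,\phi}\cRic_\phi^{\,p}(\cRic_\phi^{\,\ell}\cdot d\cRic_\phi)$.

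A final bookkeeping step finishes the proof. In that double sum the terms with $\ell=0$ cancel against $\cT_{k-1,\phi}(g\cdot d\cRic_\phi)=\sum_p(-1)^p\csigma_{k-1-p,\phi}\cRic_\phi^{\,p}(\cRic_\phi^{\,0}\cdot d\cRic_\phi)$, and for each fixed $\ell\ge 1$ the coefficient of $\cRic_\phi^{\,\ell}\cdot d\cRic_\phi$ equals $(-1)^{\ell-1}\sum_p(-1)^p\csigma_{(k-1-\ell)-p,\phi}\cRic_\phi^{\,p}=(-1)^{\ell-1}\cT_{k-1-\ell,\phi}$; setting $j=\ell-1$ recovers precisely $\sum_{j=0}^{k-2}(-1)^j\cT_{k-2-j,\phi}(\cRic_\phi^{\,j+1}\cdot d\cRic_\phi)$. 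The main obstacle is entirely combinatorial: keeping the double and triple sums organized so that the $\nabla\tr\cRic_\phi^{\,\ell}$ terms visibly cancel and the $d\cRic_\phi$ terms visibly reassemble into weighted Newton tensors. Once~\eqref{eqn:divTk} holds, the conservation law $\delta_\phi\cE_{k,\phi}=-d\csigma_{k,\phi}$ on a flat manifold with density is immediate, since there $d\cRic_\phi=\delta_\phi\Rm=0$ and $\cE_{k,\phi}=\cT_{k,\phi}-\csigma_{k,\phi}g$.
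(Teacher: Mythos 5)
Your proof is correct, and it takes a genuinely different route from the paper. The paper argues by strong induction on $k$: the base case $k=1$ is the Bianchi identity $\delta_\phi\cRic_\phi=d\csigma_{1,\phi}$, and the inductive step works from the recursion $\cT_{k,\phi}=\csigma_{k,\phi}g-\cT_{k-1,\phi}\cRic_\phi$, applying the inductive hypothesis to $\delta_\phi\cT_{k-1,\phi}$ and then commuting derivatives with the aid of Lemma~\ref{lem:dsigmak}. You instead expand $\cT_{k,\phi}=\sum_j(-1)^j\csigma_{k-j,\phi}\cRic_\phi^j$ and push $\delta_\phi$ through every summand directly, with the pointwise identity $\sum_i(\nabla_{E_i}\cRic_\phi)(\cRic_\phi^b E_i)=\tfrac{1}{b+1}\nabla\tr\cRic_\phi^{b+1}+\cRic_\phi^b\cdot d\cRic_\phi$ as the one computational ingredient. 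I checked the two reindexings: the $\nabla\tr\cRic_\phi^\ell$ families from $\delta_\phi(\cRic_\phi^j)$ and from Lemma~\ref{lem:dsigmak} do cancel with the signs you claim, the $\ell=0$ slice of the $d\cRic_\phi$ double sum is absorbed by $\cT_{k-1,\phi}(g\cdot d\cRic_\phi)=\cT_{k-1,\phi}(\nabla\cY_\phi+\cRic_\phi\nabla\phi)$, and the $\ell\ge1$ slices reassemble into $\sum_{j=0}^{k-2}(-1)^j\cT_{k-2-j,\phi}(\cRic_\phi^{j+1}\cdot d\cRic_\phi)$, so the argument closes. The inductive proof is shorter on the page because the recursion does most of the work, whereas your direct expansion makes transparent exactly where the $d\cRic_\phi$ terms originate (from commuting the derivative in $(\cRic_\phi^b)_{ij}\nabla_i(\cRic_\phi)_{jk}$ onto the free index) and that everything else telescopes. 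One small remark: the phrase ``diagonalizing $\cRic_\phi$ at a point'' is unnecessary and slightly misleading, since derivatives of $\cRic_\phi$ are involved; the identity $\tr(\cRic_\phi^b\,\nabla_k\cRic_\phi)=\tfrac{1}{b+1}\nabla_k\tr\cRic_\phi^{b+1}$ follows from the Leibniz rule and cyclicity of the trace alone.
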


\begin{proof}

The proof is by strong induction.  The case $k=1$ follows immediately from~\eqref{eqn:divE1}.  Suppose then that~\eqref{eqn:divTk} holds for all $1\leq j\leq k-1$.  Since
\begin{equation}
\label{eqn:cT_recursive}
\cT_{k,\phi} = \csigma_{k,\phi}g - \cT_{k-1,\phi}\cRic_\phi,
\end{equation}
where the second summand is to be regarded as a composition of endomorphisms of $TM$, we compute using the inductive hypothesis that
\begin{align*}
\delta_\phi\cT_{k,\phi} & = \nabla\csigma_{k,\phi} - \csigma_{k,\phi}\nabla\phi - \cT_{k-1,\phi}\cdot\nabla\cRic_\phi + \csigma_{k-1,\phi}\cRic_\phi(\nabla\phi) \\
& \quad - \sum_{j=0}^{k-3}(-1)^j\cRic_\phi\cT_{k-3-j,\phi}\left(\cRic_\phi^{j+1}\cdot d\cRic_\phi\right) ,
\end{align*}
where we define $\cT_{k-1,\phi}\cdot\nabla\cRic_\phi$ by the same contraction as $\cT_{k-1,\phi}\cdot d\cRic_\phi$.  By commuting derivatives and applying Lemma~\ref{lem:dsigmak} we find that
\[ \cT_{k-1,\phi} \cdot \nabla\cRic_\phi = \nabla\csigma_{k,\phi} - \csigma_{k-1,\phi}\nabla\cY_\phi + \cT_{k-1,\phi}\cdot d\cRic_\phi . \]
Using~\eqref{eqn:divE1} and~\eqref{eqn:cT_recursive}, we find that
\[ \cT_{k-1,\phi}\cdot d\cRic_\phi = \csigma_{k-1,\phi}\cRic_\phi(\nabla\phi) + \csigma_{k-1,\phi}\nabla\cY_\phi - \cT_{k-2,\phi}\cRic_\phi\cdot d\cRic_\phi . \]
Therefore
\begin{align*}
\delta_\phi\cT_{k,\phi} & = -\csigma_{k,\phi}\nabla\phi + \cT_{k-2,\phi}\cRic_\phi\cdot d\cRic_\phi - \sum_{j=0}^{k-3}(-1)^j\cRic_\phi\cT_{k-3-j,\phi}\left(\cRic_\phi^{j+1}\cdot d\cRic_\phi\right) \\
& = -\csigma_{k,\phi}\nabla\phi + \sum_{j=0}^{k-2}(-1)^j\cT_{k-2-j,\phi}\left(\cRic_\phi^{j+1}\cdot d\cRic_\phi\right) \\
& \quad + \left(\cT_{k-2,\phi}\cRic_\phi - \sum_{j=0}^{k-2}(-1)^j\csigma_{k-2-j,\phi}\cRic_\phi^{j+1}\right)\cdot d\cRic_\phi \\
& = -\csigma_{k,\phi}\nabla\phi + \sum_{j=0}^{k-2}(-1)^j\cT_{k-2-j,\phi}\left(\cRic_\phi^{j+1}\cdot d\cRic_\phi\right) ,
\end{align*}
as desired.
\end{proof}

As an immediate corollary of Proposition~\ref{prop:divTk}, we have the following analogue of Viaclovsky's result~\cite{Viaclovsky2000} on the divergence of the tracefree part of the $k$-th Newton tensor on a locally conformally flat manifold.

\begin{cor}
\label{cor:divEk}
Let $(M^n,g,e^{-\phi}\dvol)$ be a manifold with density and let $\lambda\in\bR$.  Suppose that $g$ is a flat metric.  Then for any $k\in\bN$ it holds that
\begin{equation}
\label{eqn:divEk}
\delta_\phi\cE_{k,\phi} = -d\csigma_{k,\phi} .
\end{equation}
\end{cor}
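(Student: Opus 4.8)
The plan is to read the identity off from Proposition~\ref{prop:divTk} once we record that the weighted Cotton tensor $d\cRic_\phi$ vanishes on a flat manifold.

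First I would observe that, since $g$ is flat, the Riemann curvature tensor $\Rm$ vanishes identically; by the identity $\delta_\phi\Rm=d\cRic_\phi$ recorded in Section~\ref{sec:smms} (equivalently, by the Ricci identity applied to $\nabla^2\phi$, using $\Ric=0$), it follows that $d\cRic_\phi=0$ for every parameter $\lambda\in\bR$. Hence every term on the right-hand side of~\eqref{eqn:divTk} vanishes, since each summand $(-1)^j\cT_{k-2-j,\phi}\bigl(\cRic_\phi^{j+1}\cdot d\cRic_\phi\bigr)$ carries the factor $d\cRic_\phi$. Thus, on a flat manifold, Proposition~\ref{prop:divTk} reduces to
\[ \delta_\phi\cT_{k,\phi} = -\csigma_{k,\phi}\,\nabla\phi . \]

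Next I would compute $\delta_\phi\bigl(\csigma_{k,\phi}g\bigr)$. For any $f\in C^\infty(M)$ one has $\delta(fg)=df$ and $\iota_{\nabla\phi}(fg)=f\,d\phi$, so $\delta_\phi(fg)=\delta(fg)-\iota_{\nabla\phi}(fg)=df-f\,\nabla\phi$. Applying this with $f=\csigma_{k,\phi}$ and using $\cE_{k,\phi}=\cT_{k,\phi}-\csigma_{k,\phi}g$ from Definition~\ref{defn:Ek} gives
\[ \delta_\phi\cE_{k,\phi} = \delta_\phi\cT_{k,\phi} - \delta_\phi\bigl(\csigma_{k,\phi}g\bigr) = -\csigma_{k,\phi}\,\nabla\phi - \bigl(d\csigma_{k,\phi}-\csigma_{k,\phi}\,\nabla\phi\bigr) = -d\csigma_{k,\phi}, \]
which is~\eqref{eqn:divEk}.

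All of the ingredients are already available, so I do not anticipate a genuine obstacle; the one point requiring a little care is tracking the sign conventions for the weighted divergence $\delta_\phi$ acting on sections of $S^2T^\ast M$ (and, implicitly, the identification of one-forms with vector fields via $g$). As a consistency check I would confirm that the argument recovers Corollary~\ref{cor:divE1} in the case $k=1$: there the sum in~\eqref{eqn:divTk} is empty and $\cE_{1,\phi}=-\cRic_\phi$, so the computation collapses to $\delta_\phi\cE_{1,\phi}=-d\csigma_{1,\phi}$, in agreement with the Bianchi identity $\delta_\phi\cRic_\phi=\frac{1}{2}d\cR_\phi$.
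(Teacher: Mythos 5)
Your proof is correct and follows essentially the same route as the paper: both specialize Proposition~\ref{prop:divTk} to a flat metric (using $d\cRic_\phi=0$, which the paper notes earlier in Section~\ref{sec:smms}) and then unwind the definition $\cE_{k,\phi}=\cT_{k,\phi}-\csigma_{k,\phi}g$. Your version just spells out the computation of $\delta_\phi(\csigma_{k,\phi}g)$, which the paper leaves implicit.
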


\begin{proof}

Since $g$ is flat, Proposition~\ref{prop:divTk} implies that
\[ \delta_\phi \cT_{k,\phi} + \csigma_{k,\phi}\nabla\phi = 0 . \]
The result then follows from the definition $\cE_{k,\phi}=\cT_{k,\phi} - \csigma_{k,\phi}g$ of the trace-adjusted $k$-th weighted Newton tensor.
\end{proof}

\begin{remark}
\label{rk:divergence}
All of the results in this section are not sensitive to the particular choice of constant in the definition of $\csigma_{k,\phi}$.  More precisely, for any constant $c\in\bR$, consider the curvature invariants
\[ \csigma_{k,\phi}^{(c)} := \sigma_k^\infty\left(\cY_\phi + c, \cRic_\phi\right) \]
and their associated trace-adjusted $k$-th weighted Newton tensors $\cE_{k,\phi}^{(c)}$.  Then Corollary~\ref{cor:divEk} is true for these invariants.  This is because the first step in the inductive arguments of Lemma~\ref{lem:dsigmak} and Proposition~\ref{prop:divTk} only relies on the fact
\[ \delta_\phi\cRic_\phi = d\left(\tr\cRic_\phi + \cY_\phi\right) = d\left(\tr\cRic_\phi + \cY_\phi + c\right) . \]
\end{remark}
\section{Obata theorems for the weighted $\sigma_k$-curvature}
\label{sec:obata}

Obata's theorem states that the only critical points of the Yamabe functional within the conformal class of an Einstein metric are themselves Einstein metrics~\cite{Obata1971}.  A more restrictive version of this result, proven by a similar technique, states that the only critical points in the positive elliptic $k$-cone of the volume-normalized total $\sigma_k$-curvature functional within the conformal class of the standard metric on $S^n$ are the Einstein metrics~\cite{Viaclovsky2000}.  In particular, this characterizes the critical points in the positive elliptic cone of the total $\sigma_k$-curvature functional on Euclidean space under strong restrictions on their decay at infinity; using the method of moving places, the decay assumptions at infinity can be removed~\cite{CGS1989,LiLi2003}.

The purpose of this section is to prove analogous Obata-type theorems for the weighted $\sigma_k$-curvature.  As motivation, we begin by recalling a result of Perelman~\cite{Perelman1} which states that the only critical points of the $\mW_1$-functional on a compact shrinking gradient Ricci soliton are themselves shrinking gradient Ricci solitons; see Theorem~\ref{thm:obata/1} for the precise statement.  Our first main result is Theorem~\ref{thm:obata/2}, which characterizes critical points in the negative weighted elliptic $2$-cone of the total weighted $\sigma_2$-curvature functional $\mW_2\colon\mC_1(dx^2)\to\bR$ on Euclidean space.  Our second result is Theorem~\ref{thm:obata/k}, which characterizes certain pairs $(\phi,\tau)\in\mC_1(dx^2)$ on Euclidean space such that $(\bR^n,dx^2,e^{-\phi}\dvol)$ has constant weighted $\sigma_k$-curvature and lies in the negative weighted elliptic $k$-cone.  This latter result can also be interpreted as a statement about critical points of a functional on $\mC_1(dx^2)$; see Section~\ref{sec:variational_status} for further discussion.

Both of our results require us to impose strong decay assumptions on the potential $\phi$ at infinity, and hence are best regarded as weighted analogues of the Obata-type theorems in~\cite{Viaclovsky2000}.  We expect that one can establish the same conclusion assuming only that $e^{-\phi}\dvol$ is a finite measure by combining our argument with an adaptation of the integral estimates developed in~\cite{ChangGurskyYang2003b,Gonzalez2006r} for the $\sigma_k$-curvatures.

\subsection{Critical points of the total weighted $\sigma_1$-curvature}
\label{subsec:obata/1}

In order to motivate our techniques, we recall the following simple form of Perelman's Obata-type theorem and its proof for critical points of the $\mW_1$-functional on compact shrinking gradient Ricci solitons~\cite{Perelman1}.  Carrillo and Ni~\cite{CarrilloNi2008} showed more generally that volume-normalized shrinking gradient Ricci solitons minimize the $\nu$-entropy.  It is expected, though to the best of the author's knowledge no such result appears in the literature, that the compactness assumption below can be removed.

\begin{thm}
\label{thm:obata/1}
Let $(M^n,g,e^{-\phi_0}\dvol)$ be a compact volume-normalized shrinking gradient Ricci soliton satisfying $\Ric_{\phi_0}=\frac{1}{2\tau_0}g$.  Suppose that $(\phi,\tau)\in\mC_1(g)$ is a critical point of the total weighted $\sigma_1$-curvature functional $\mW_1\colon\mC_1(g)\to\bR$.  Then $(\phi,\tau)=(\phi_0,\tau_0)$.
\end{thm}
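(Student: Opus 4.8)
The plan is to follow Perelman~\cite{Perelman1}: use the first variation of $\mW_1$ to reduce the critical point condition to an Euler--Lagrange system, then show that a critical point of $\mW_1\colon\mC_1(g)\to\bR$ is itself a volume-normalized shrinking gradient Ricci soliton structure on the fixed metric $g$, and finally observe that a compact metric carries at most one such structure.

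First I would record the Euler--Lagrange equations. By Corollary~\ref{cor:critW1}, $(\phi,\tau)\in\mC_1(g)$ is a critical point of $\mW_1\colon\mC_1(g)\to\bR$ if and only if there is a constant $c$ with $\csigma_{1,\phi}-\tfrac{1}{2\tau}=c$ and $\int_M\tr\cRic_\phi\,e^{-\phi}\dvol=0$, where $\cRic_\phi$ and $\csigma_{1,\phi}$ are formed with the parameter $\lambda=\tfrac{1}{2\tau}$. In particular $\csigma_{1,\phi}$, equivalently $\cR_\phi$, is constant, so the weighted Bianchi identity gives $\delta_\phi\cRic_\phi=\tfrac12 d\cR_\phi=0$; and by Lemma~\ref{lem:modified_sigmak_grs} the soliton data $(\phi_0,\tau_0)$ solves this system with $\cRic_{\phi_0}=0$.

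The main step is to upgrade a solution of this system to $\cRic_\phi=0$. Here the plan is to use that $g$ is a self-similar solution of the Ricci flow. Letting $(g(t),\phi(t),\tau(t))$, with $\tau(t)=\tau-t$ and $\lambda(t)=\tfrac{1}{2\tau(t)}$, evolve by the Ricci flow coupled with the conjugate heat equation starting from $(g,\phi,\tau)$, Perelman's monotonicity formula reads
\[ \frac{d}{dt}\mW_1\bigl(g(t),\phi(t),\tau(t)\bigr) = \int_M \tau(t)\,\bigl\lvert\cRic_{\phi(t)}\bigr\rvert^2 (4\pi\tau(t))^{-\frac n2} e^{-\phi(t)}\dvol_{g(t)} \geq 0 . \]
Since $g$ is a shrinking soliton metric, $g(t)$ differs from $g$ only by scaling and diffeomorphism, so $\nu(g(t))$ is independent of $t$. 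Combining the constancy of $\nu(g(t))$ with the monotonicity of $\mW_1$ along the flow --- and, if needed, with the uniqueness of the minimizer of $\mW_1(g,\cdot,\tau)$ for fixed $\tau$ (Rothaus~\cite{Rothaus1981}) and the fact, due to Carrillo--Ni~\cite{CarrilloNi2008}, that volume-normalized shrinking solitons are the unique minimizers of the $\nu$-entropy --- one concludes that $\mW_1(g(t),\phi(t),\tau(t))$ is constant in $t$, hence $\cRic_{\phi(t)}\equiv0$ and in particular $\cRic_\phi=0$.

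Finally, once $\Ric+\nabla^2\phi=\tfrac{1}{2\tau}g$, I would subtract the soliton equation $\Ric+\nabla^2\phi_0=\tfrac{1}{2\tau_0}g$ to get $\nabla^2(\phi-\phi_0)=\bigl(\tfrac{1}{2\tau}-\tfrac{1}{2\tau_0}\bigr)g$; taking the trace and integrating over the compact manifold $M$ forces $\tau=\tau_0$, so $\nabla^2(\phi-\phi_0)=0$ and $\phi-\phi_0$ is constant, and then the normalization $\int_M(4\pi\tau)^{-\frac n2}e^{-\phi}\dvol=1=\int_M(4\pi\tau_0)^{-\frac n2}e^{-\phi_0}\dvol$ forces that constant to be $0$. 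The main obstacle is the middle step: constancy of $\csigma_{1,\phi}$ together with the integral constraint does \emph{not} imply $\cRic_\phi=0$ without using the soliton hypothesis on $g$ (e.g.\ on a compact constant-scalar-curvature non-Einstein manifold the analogous system has spurious solutions), so one genuinely needs to feed the self-similar structure of $g$ into Perelman's Ricci-flow argument --- in particular to rule out critical points of $\mW_1\colon\mC_1(g)\to\bR$ that are not a priori minimizers.
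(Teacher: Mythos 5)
Your proposal pursues the Ricci flow approach, while the paper's proof is a short, Ricci-flow-free integration-by-parts argument in the spirit of Obata's original theorem. The paper substitutes the soliton equation $\Ric+\nabla^2\phi_0=\lambda_0 g$ into the definition of $\cRic_\phi$ (with $\lambda=\frac{1}{2\tau}$, $\lambda_0=\frac{1}{2\tau_0}$) to obtain the pointwise identity $\cRic_\phi=\nabla^2(\phi-\phi_0)+(\lambda_0-\lambda)g$, then pairs this against $\cRic_\phi$ and integrates with respect to $e^{-\phi}\dvol$: the Hessian term vanishes by a single integration by parts using $\delta_\phi\cRic_\phi=0$ (which follows from the constancy of $\csigma_{1,\phi}$), and the $(\lambda_0-\lambda)g$ term vanishes by the trace constraint $\int\tr\cRic_\phi\,e^{-\phi}\dvol=0$. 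This gives $\int|\cRic_\phi|^2\,e^{-\phi}\dvol=0$ at once; the finish then matches your final paragraph.

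Your middle step, as written, has a real gap. Perelman's monotonicity gives only that $t\mapsto\mW_1(g(t),\phi(t),\tau(t))$ is nondecreasing, and the constancy of $\nu(g(t))$ gives only a $t$-independent lower bound for it; a nondecreasing function bounded below by a constant need not be constant. Invoking Rothaus's uniqueness of minimizers or Carrillo--Ni's characterization of the $\nu$-minimizer does not close the gap, since those results concern minimizers while the hypothesis gives only a critical point of $\mW_1\colon\mC_1(g)\to\bR$ --- which, as you yourself note at the end, is not a priori a minimizer. The Ricci flow route \emph{can} be made to work, but the mechanism differs from what you propose: since the self-similar flow $g(t)$ is a scaled pullback of $g$, the scale invariance of Lemma~\ref{lem:mW_scale_invariance} and diffeomorphism invariance of $\mW_1$ rewrite $\mW_1(g(t),\phi(t),\tau(t))=\mW_1(g,\cphi(t),\ctau(t))$ for a smooth curve $(\cphi(t),\ctau(t))$ in $\mC_1(g)$ with $(\cphi(0),\ctau(0))=(\phi,\tau)$. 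The critical point hypothesis then forces the derivative at $t=0$ of $\mW_1(g(t),\phi(t),\tau(t))$ to vanish, and Perelman's formula turns this single derivative --- not constancy along the whole flow --- into $\int\tau|\cRic_\phi|^2\,(4\pi\tau)^{-\frac n2}e^{-\phi}\dvol=0$, i.e.\ $\cRic_\phi=0$. This is the step your argument is missing; once it is supplied, your conclusion from $\cRic_\phi=0$ is correct.
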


\begin{proof}

Since $(M^n,g,e^{-\phi_0}\dvol)$ is a shrinking gradient Ricci soliton, it follows that the adjusted Bakry-\'Emery Ricci tensor $\cRic_\phi$ of $(M^n,g,e^{-\phi}\dvol)$ with parameter $\lambda=\frac{1}{2\tau}$ is
\begin{equation}
\label{eqn:cRic_confqe}
\cRic_\phi = \nabla^2\left(\phi - \phi_0\right) + \left(\lambda_0 - \lambda\right)g
\end{equation}
for $\lambda_0=\frac{1}{2\tau_0}$.  By Corollary~\ref{cor:critW1}, the assumption on $(\phi,\tau)$ implies that $\delta_\phi\cRic_\phi=0$ and $\int\tr\cRic_\phi=0$, where the integral is taken with respect to $e^{-\phi}\dvol$.  Hence
\begin{equation}
\label{eqn:obata/1/ibp}
0 = \int_M \lp\cRic_\phi,\nabla^2\left(\phi-\phi_0\right)+\left(\lambda_0-\lambda\right)g\rp\,e^{-\phi}\dvol = \int_M \lv\cRic_\phi\rv^2 e^{-\phi}\dvol .
\end{equation}
Thus $\cRic_\phi=0$.  The maximum principle applied to the trace of~\eqref{eqn:cRic_confqe} implies that $\lambda=\lambda_0$ and that $\phi-\phi_0$ is constant.  The desired result is then a consequence of the normalizations $(\phi_0,\tau_0),(\phi,\tau)\in\mC_1(g)$.
\end{proof}

\subsection{Critical points of the total weighted $\sigma_2$-curvature}
\label{subsec:obata/2}

We now turn to proving our classification of the shrinking Gaussians in terms of the $\mW_2$-functional.  There are two new concerns to address.  First, we need to deal with the noncompactness of Euclidean space; we deal with this by imposing strong decay assumptions on the potential $\phi$.  Second, we need to impose an ellipticity assumption to obtain a sign on the analogue of the integrand in~\eqref{eqn:obata/1/ibp}.

\begin{thm}
\label{thm:obata/2}
Suppose that $(\phi,\tau)\in \mC_1(dx^2)$ is a critical point of the total weighted $\sigma_2$-curvature functional $\mW_2\colon\mC_1(dx^2)\to\bR$ on Euclidean space.  Suppose additionally that
\begin{align*}
\csigma_{1,\phi} & < \frac{1}{2\tau}, \\
\csigma_{2,\phi} & > \frac{1}{2\tau}\csigma_{1,\phi} - \frac{1}{8\tau^2},
\end{align*}
and that the function $\cphi(x)=\lv x\rv^2\phi\left(x/\lv x\rv^2\right)$ extends to a $C^2$ function in $\bR^n$ such that $\cphi(0)>0$.  Then there is a point $x_0\in\bR^n$ such that
\begin{equation}
\label{eqn:obata/2}
\phi(x) = \frac{\lv x-x_0\rv^2}{4\tau} .
\end{equation}
\end{thm}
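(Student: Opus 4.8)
The plan is to run an Obata-type argument adapted to the weighted, flat setting: produce from the critical point condition a weighted-divergence-free symmetric tensor built out of $\cRic_\phi$, pair it with the Hessian of $\psi:=\phi-\tfrac{|x|^2}{4\tau}$, and use the two ellipticity hypotheses to force the resulting integrand to have a definite sign. The integral then vanishes, so $\cRic_\phi\equiv0$, which makes $\psi$ affine; the normalization of the measure then pins down the remaining constant and gives~\eqref{eqn:obata/2}.

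First I would record the flat-space algebra. With parameter $\lambda=\tfrac{1}{2\tau}$ and $\Ric\equiv0$ we have $\cRic_\phi=\nabla^2\phi-\tfrac{1}{2\tau}g=\nabla^2\psi$, where $\psi=\phi-\tfrac{|x|^2}{4\tau}\in C^\infty(\bR^n)$ is globally defined. From Definition~\ref{defn:Tk} and Definition~\ref{defn:Ek} one computes $\cE_{1,\phi}=-\cRic_\phi$ and $\cE_{2,\phi}=\cRic_\phi^2-\csigma_{1,\phi}\cRic_\phi$, so, writing $A:=\cRic_\phi$ and $s:=\csigma_{1,\phi}-\tfrac{1}{2\tau}$,
\[ S \;:=\; \cE_{2,\phi}-\tfrac{1}{2\tau}\cE_{1,\phi} \;=\; A^2-sA \;=\; -A\,(sI-A). \]
The first ellipticity hypothesis says precisely $s<0$; since $\csigma_{2,\phi}=\tfrac12\big((\csigma_{1,\phi})^2-|\cRic_\phi|^2\big)$, the second hypothesis rearranges to $s^2>|A|^2$. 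As every eigenvalue $\mu_i$ of the symmetric endomorphism $A$ satisfies $\mu_i^2\le|A|^2<s^2$ together with $s<0$, we get $s<\mu_i<-s$ for all $i$; hence $sI-A$ is negative definite.

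Next I would extract the Euler--Lagrange content and integrate by parts. Varying $\phi$ only, by $\phi^{\bullet}=\psi_0$ a compactly supported function with $\int\psi_0 e^{-\phi}\dvol=0$ (so as to stay in $\mC_1(dx^2)$ with $\tau$ fixed), Proposition~\ref{prop:first_variation2} with $h=0$ and $\cB_\phi=0$ gives $\mW_2^{\bullet}=-\tau^2\int_{\bR^n}(\csigma_{2,\phi}-\tfrac{1}{2\tau}\csigma_{1,\phi})\,\psi_0\,(4\pi\tau)^{-n/2}e^{-\phi}\dvol$; no boundary terms occur because $\psi_0$ is compactly supported, so being a critical point forces $\csigma_{2,\phi}-\tfrac{1}{2\tau}\csigma_{1,\phi}$ to be a constant. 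Combining this with Corollary~\ref{cor:divEk} (valid since $g$ is flat) yields $\delta_\phi S=-d\big(\csigma_{2,\phi}-\tfrac{1}{2\tau}\csigma_{1,\phi}\big)=0$. Integrating by parts against $\nabla^2\psi=A$, with the boundary integrals over large spheres discarded, then gives
\[ 0 \;=\; \int_{\bR^n}\langle S,\nabla^2\psi\rangle\,e^{-\phi}\dvol \;=\; \int_{\bR^n}\langle S,A\rangle\,e^{-\phi}\dvol \;=\; -\int_{\bR^n}\tr\!\big((sI-A)A^2\big)\,e^{-\phi}\dvol. \]
In an eigenbasis the integrand equals $\sum_i(\mu_i-s)\mu_i^2$, which is nonnegative and vanishes at a point only when $A=0$ there, since $\mu_i-s>0$ strictly. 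Hence $\cRic_\phi\equiv0$, i.e.\ $\nabla^2\psi\equiv0$, so $\psi(x)=\langle a,x\rangle+b$ is affine; completing the square gives $\phi(x)=\tfrac{|x-x_0|^2}{4\tau}+b'$ with $x_0=-2\tau a$, and the normalization $\int_{\bR^n}(4\pi\tau)^{-n/2}e^{-\phi}\dvol=1$ forces $b'=0$, which is~\eqref{eqn:obata/2}.

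The step I expect to be the main obstacle is the one glossed above: justifying the integration by parts, i.e.\ showing that the boundary integrals $\int_{\partial B_R}S(\nu,\nabla\psi)\,e^{-\phi}\,dA$ tend to $0$ as $R\to\infty$ (and, similarly, that all the integrals converge and that the first-variation formula is legitimate on $\bR^n$). This is exactly where the hypothesis on $\cphi(x)=|x|^2\phi(x/|x|^2)$ is used: the Kelvin-type substitution $y=x/|x|^2$ together with $\cphi\in C^2$ near $0$ and $\cphi(0)>0$ gives $\phi(x)=\cphi(0)|x|^2+O(|x|)$ with polynomially controlled first and second derivatives, so $|S|$ and $|\nabla\psi|$ grow at most polynomially while $e^{-\phi}$ decays like $e^{-\cphi(0)|x|^2}$; the surface integrals therefore vanish in the limit. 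I would carry out this growth bookkeeping first, after which the algebra and divergence structure assembled above close the argument.
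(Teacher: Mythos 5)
Your proposal is correct and follows essentially the same Obata-type argument as the paper: the critical-point condition yields a weighted-divergence-free tensor $\cE_{2,\phi}-\tfrac{1}{2\tau}\cE_{1,\phi}$, integration by parts with boundary terms controlled by the Kelvin-type decay hypothesis gives $\int\langle \cE_{2,\phi}-\tfrac{1}{2\tau}\cE_{1,\phi},\cRic_\phi\rangle\,e^{-\phi}\dvol=0$, and the ellipticity-cone inequalities force the integrand to be pointwise nonnegative with equality only when $\cRic_\phi=0$, after which $\nabla^2\psi\equiv 0$ and the normalization pins down $\phi$. Your two small deviations --- evaluating the sign of $\langle S,\cRic_\phi\rangle=\sum_i(\mu_i-s)\mu_i^2$ directly in an eigenbasis instead of invoking Corollary~\ref{cor:newton} via $\langle\hE_{2,\phi},\cRic_\phi\rangle=3\hsigma_{3,\phi}-\hsigma_{1,\phi}\hsigma_{2,\phi}$, and integrating against $\nabla^2\psi=\cRic_\phi$ rather than $\nabla^2\phi$ so that the trace constraint $\int\tr\hE_{2,\phi}\,e^{-\phi}\dvol=0$ from Theorem~\ref{thm:intro/vary_w2}(2) is never needed --- are legitimate minor streamlinings of the paper's proof.
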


\begin{proof}

The adjusted Bakry-\'Emery Ricci tensor $\cRic_\phi$ of $(M^n,g,e^{-\phi}\dvol)$ with parameter $\lambda=\frac{1}{2\tau}$ is
\begin{equation}
\label{eqn:cRic_confqe2}
\cRic_\phi = \nabla^2\phi - \lambda g .
\end{equation}
Denote by
\begin{equation}
\label{eqn:hsigmak}
\hsigma_{k,\phi}:=\sigma_k^\infty\left(\cY_\phi-\lambda,\cRic_\phi\right)
\end{equation}
and by $\hE_{k,\phi}$ the associated trace-adjusted weighted $k$-th Newton tensors.  Then
\begin{align*}
\hsigma_{1,\phi} & = \csigma_{1,\phi} - \frac{1}{2\tau} \\
\hsigma_{2,\phi} & = \csigma_{2,\phi} - \frac{1}{2\tau}\csigma_{1,\phi} + \frac{1}{8\tau^2} \\
\hE_{2,\phi} & = \cE_{2,\phi} - \frac{1}{2\tau}\cE_{1,\phi} .
\end{align*}
Note that the sign assumptions on $\csigma_{k,\phi}$ are equivalent to $(-1)^k\hsigma_{k,\phi}>0$ for $k\in\{1,2\}$; i.e.\ $(M^n,g,e^{-\phi}\dvol)$ with parameter $\lambda=\frac{1}{2\tau}$ lies in the negative weighted elliptic $2$-cone $\hGamma_{2,\phi}^-$ determined by our definition of $\hsigma_{k,\phi}$.  By Theorem~\ref{thm:intro/vary_w2} and Corollary~\ref{cor:divE2}, we also have that $\delta_\phi\hE_{2,\phi}=0$ and $\int\tr\hE_{2,\phi}=0$.  Letting $B_s\subset\bR^n$ denote the ball of radius $s>0$ centered at the origin, we then compute that
\begin{equation}
\label{eqn:ibp}
\begin{split}
\lim_{s\to\infty}\int_{\partial B_s} \hE_{2,\phi}(\nabla\phi,\nabla s)e^{-\phi}\dvol & = \int_{\bR^n} \lp \hE_{2,\phi},\cRic_\phi\rp \,e^{-\phi}\dvol \\
& = \int_{\bR^n} \left[ 3\hsigma_{3,\phi} - \hsigma_{1,\phi}\hsigma_{2,\phi} \right] e^{-\phi}\dvol .
\end{split}
\end{equation}
The assumption $\cphi\in C^2$ with $\cphi(0)>0$ implies that there is a constant $c>0$ such that $\phi(x)\geq c\lv x\rv^2$ for $\lv x\rv$ sufficiently large and that $\lv\nabla\phi\rv$ and $\lv\nabla^2\phi\rv$ grow at most polynomially in $\lv x\rv$.  In particular, the boundary integral in~\eqref{eqn:ibp} vanishes in the limit $s\to\infty$.  It then follows from Corollary~\ref{cor:newton} that $(\bR^n,g,e^{-\phi}\dvol)$ is a shrinking gradient Ricci soliton, from which we readily compute that $\phi$ is of the form~\eqref{eqn:obata/2}.
\end{proof}

\begin{remark}

The decay assumption at infinity made in Theorem~\ref{thm:obata/2} is the direct analogue of the assumption made in~\cite{Viaclovsky2000b}.  It is clear from the proof that it suffices to assume that the boundary integral in~\eqref{eqn:ibp} vanishes as $s\to\infty$.  As mentioned at the beginning of this section, we expect that the decay assumption can be replaced by the assumption that $e^{-\phi}\dvol$ is a finite measure on $\bR^n$.
\end{remark}

\subsection{Constant $\sigma_k$-curvature}
\label{subsec:obata/k}

Finally, we consider simply-connected flat manifolds with density with for which the weighted $\sigma_k$-curvature $\hsigma_{k,\phi}$ defined by~\eqref{eqn:hsigmak} is constant for some positive parameter $\lambda$.  This is a natural generalization of Theorem~\ref{thm:obata/1} and Theorem~\ref{thm:obata/2}, though such manifolds with density are not generally critical points for the total weighted $\sigma_k$-curvature functional.  Nevertheless, they are critical points for some variational problem (cf.\ Section~\ref{sec:variational_status}).

\begin{thm}
\label{thm:obata/k}
Suppose that $(\phi,\tau)\in\mC_1(dx^2)$ is such that the manifold with density $(\bR^n,dx^2,e^{-\phi}\dvol)$ with parameter $\lambda=\frac{1}{2\tau}$ satisfies
\[ d\hsigma_{k,\phi} = 0 \quad\text{and}\quad \int_{\bR^n}\tr\hE_{k,\phi}\,e^{-\phi}\dvol = 0 \]
for some $k\in\bR$.  Suppose additionally that $(-1)^j\hsigma_{j,\phi}>0$ for all $j\in\{1,2,\dotsc,k\}$ and that the function $\cphi(x):=\lv x\rv^2\phi\left(x/\lv x\rv^2\right)$ extends to a $C^2$ function on $\bR^n$ with $\cphi(0)>0$.  Then there is a point $x_0\in\bR^n$ such that $\phi(x)=\frac{\lv x-x_0\rv^2}{4\tau}$.
\end{thm}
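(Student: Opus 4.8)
The plan is to repeat the proof of Theorem~\ref{thm:obata/2} essentially verbatim, replacing the $k=2$ ingredient Corollary~\ref{cor:divE2} by the general flat-manifold divergence formula Corollary~\ref{cor:divEk}. As in~\eqref{eqn:cRic_confqe2} we have $\cRic_\phi = \nabla^2\phi - \lambda g$ on $(\bR^n, dx^2)$ with $\lambda = \frac{1}{2\tau}$, and the positivity hypothesis $(-1)^j\hsigma_{j,\phi}>0$ for $j\in\{1,\dots,k\}$ says exactly that $\left(\cY_\phi(p)-\lambda,\ \cRic_\phi|_p\right)\in\Gamma_k^{\infty,-}$ at every point $p$, so that Corollary~\ref{cor:newton} (which requires $k\in\bN$) applies to this pair pointwise. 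The first step is to observe that, since $dx^2$ is flat, Corollary~\ref{cor:divEk} in the shifted form licensed by Remark~\ref{rk:divergence} --- taking the constant there to be $-\lambda$, so that $\csigma_{k,\phi}^{(-\lambda)} = \hsigma_{k,\phi}$ and $\cE_{k,\phi}^{(-\lambda)} = \hE_{k,\phi}$ --- gives $\delta_\phi\hE_{k,\phi} = -d\hsigma_{k,\phi}$, which vanishes by hypothesis.

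The second step is the integration by parts of~\eqref{eqn:ibp}: combining $\delta_\phi\hE_{k,\phi}=0$ with the hypothesis $\int_{\bR^n}\tr\hE_{k,\phi}\,e^{-\phi}\dvol=0$ and repeating that computation with $\hE_{2,\phi}$ replaced by $\hE_{k,\phi}$ yields
\[ \lim_{s\to\infty}\int_{\partial B_s}\hE_{k,\phi}(\nabla\phi,\nabla s)\,e^{-\phi}\dvol = \int_{\bR^n}\lp\hE_{k,\phi},\cRic_\phi\rp\,e^{-\phi}\dvol , \]
granted that the boundary term has a limit and the right-hand integral converges. Both facts follow, exactly as in Theorem~\ref{thm:obata/2}, from the assumption that $\cphi$ extends to a $C^2$ function with $\cphi(0)>0$: this forces $\phi(x)\ge c\lv x\rv^2$ for $\lv x\rv$ large and at-most-polynomial growth of $\lv\nabla\phi\rv$ and $\lv\nabla^2\phi\rv$, hence of $\hE_{k,\phi}$, so the boundary term in fact tends to $0$. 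Now Lemma~\ref{lem:contr_newton}, applied to the symmetric endomorphism $\cRic_\phi$ and the scalar $\cY_\phi-\lambda$, gives $\lp\hT_{k,\phi},\cRic_\phi\rp = (k+1)\hsigma_{k+1,\phi} - (\cY_\phi-\lambda)\hsigma_{k,\phi}$, where $\hT_{k,\phi}$ denotes the $k$-th weighted Newton transformation of $\cRic_\phi$ and $\cY_\phi-\lambda$; subtracting the trace term (so that $\hE_{k,\phi}=\hT_{k,\phi}-\hsigma_{k,\phi}g$) and using $\hsigma_{1,\phi}=(\cY_\phi-\lambda)+\tr\cRic_\phi$ yields $\lp\hE_{k,\phi},\cRic_\phi\rp = (k+1)\hsigma_{k+1,\phi}-\hsigma_{1,\phi}\hsigma_{k,\phi}$. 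Hence $\int_{\bR^n}\left[(k+1)\hsigma_{k+1,\phi}-\hsigma_{1,\phi}\hsigma_{k,\phi}\right]e^{-\phi}\dvol = 0$.

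The final step invokes Corollary~\ref{cor:newton}: at every point of $\bR^n$ one has $(-1)^{k+1}\left[(k+1)\hsigma_{k+1,\phi}-\hsigma_{1,\phi}\hsigma_{k,\phi}\right]\le 0$, with equality precisely where $\cRic_\phi$ vanishes. Since the weighted integral of this nonpositive quantity is $0$, it must vanish identically, so $\cRic_\phi\equiv 0$; that is, $\nabla^2\phi = \frac{1}{2\tau}g$. Integrating twice and then imposing the normalization $(\phi,\tau)\in\mC_1(dx^2)$ gives $\phi(x)=\frac{\lv x-x_0\rv^2}{4\tau}$ for some $x_0\in\bR^n$, which is the assertion. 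The genuinely new points relative to Theorem~\ref{thm:obata/2} are merely (i) that Corollary~\ref{cor:divEk} together with Remark~\ref{rk:divergence} supplies $\delta_\phi\hE_{k,\phi}=0$ in place of Corollary~\ref{cor:divE2}, and (ii) the identification of $\lp\hE_{k,\phi},\cRic_\phi\rp$ via Lemma~\ref{lem:contr_newton}; the analytic core --- making the boundary term vanish so that the pointwise inequality of Corollary~\ref{cor:newton} may be integrated --- is unchanged, and is where the $C^2$-extension hypothesis on $\cphi$ is used.
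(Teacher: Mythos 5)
Your argument reproduces the paper's proof of Theorem~\ref{thm:obata/k} step for step: Corollary~\ref{cor:divEk} together with Remark~\ref{rk:divergence} gives $\delta_\phi\hE_{k,\phi}=0$, the decay hypothesis on $\cphi$ kills the boundary term in the integration by parts, Lemma~\ref{lem:contr_newton} identifies the integrand as $(k+1)\hsigma_{k+1,\phi}-\hsigma_{1,\phi}\hsigma_{k,\phi}$, and Corollary~\ref{cor:newton} forces $\cRic_\phi\equiv0$. Your added detail spelling out the $\lp\hE_{k,\phi},\cRic_\phi\rp$ computation via Lemma~\ref{lem:contr_newton} is correct and is exactly what the paper implicitly invokes in passing from~\eqref{eqn:ibp} to the general $k$ case.
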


\begin{proof}

Note that $\cRic_\phi$ satisfies~\eqref{eqn:cRic_confqe2}.  As noted in Remark~\ref{rk:divergence}, we may apply Corollary~\ref{cor:divEk} to conclude that $\delta_\phi\hE_{k,\phi}=0$.  Using our decay assumptions on $\phi$, we may argue as in the proof of Theorem~\ref{thm:obata/2} that
\[ 0 = \int_{\bR^n} \lp \hE_{k,\phi}, \cRic_\phi\rp \,e^{-\phi}\dvol = \int_{\bR^n} \left[ (k+1)\hsigma_{k+1,\phi} - \hsigma_{1,\phi}\hsigma_{k,\phi} \right] e^{-\phi}\dvol . \]
From Corollary~\ref{cor:newton} it follows that $(\bR^n,g,e^{-\phi}\dvol)$ is a volume-normalized shrinking gradient Ricci soliton, from which the conclusion immediately follows.
\end{proof}
\section{The Variational Status of $\sigma_{k,\phi}$}
\label{sec:variational_status}

It is known that the $\sigma_k$-curvature is variational in a conformal class $[g]$ if and only if $k\in\{1,2\}$ or $[g]$ is a class of locally conformally flat metrics~\cite{BransonGover2008}, and moreover, in the variational cases the $\sigma_k$-curvature problem can be solved~\cite{ShengTrudingerWang2007}.  The purpose of this section is to show that the first statement holds in the weighted sense.

\begin{thm}
\label{thm:variational_status}
Let $(M^n,g)$ be a Riemannian manifold and fix $\lambda\in\bR$ and a positive integer $k\leq n$.  Then the weighted $\sigma_k$-curvature $\csigma_{k,\phi}$ is variational if and only if $k\in\{1,2\}$ or the Riemann curvature tensor $\Rm$ of $g$ vanishes identically.
\end{thm}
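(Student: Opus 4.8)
The plan is to characterize variationality by a self‑adjointness property of the linearization of $\phi\mapsto\csigma_{k,\phi}$ and then to confront that property with the divergence formula of Proposition~\ref{prop:divTk}. First I would observe that, with $g$ and $\lambda$ held fixed, $C^\infty(M)$ is affine, so the one‑form $\psi\mapsto\int_M\csigma_{k,\phi}\,\psi\,e^{-\phi}\dvol_g$ on $C^\infty(M)$ is exact (i.e.\ $\csigma_{k,\phi}$ is variational) precisely when it is closed; and since the contribution of the variation of the measure $e^{-\phi}\dvol_g$ to its exterior derivative is symmetric in the two variation directions and hence cancels, closedness is equivalent to the linearization of $\phi\mapsto\csigma_{k,\phi}$ being formally self‑adjoint with respect to $e^{-\phi}\dvol_g$ at every $\phi$. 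By Proposition~\ref{prop:diff} that linearization, under variations of $\phi$ alone, is
\[ \psi \longmapsto \langle\cT_{k-1,\phi},\nabla^2\psi\rangle - \csigma_{k-1,\phi}\langle\nabla\phi,\nabla\psi\rangle + \lambda\,\csigma_{k-1,\phi}\,\psi , \]
whose leading coefficient is $\cT_{k-1,\phi}$ and whose first‑order coefficient is $-\csigma_{k-1,\phi}\,\nabla\phi$. A routine integration by parts (compare the proof of Corollary~\ref{cor:divE1}) shows that a second‑order operator $\psi\mapsto\langle A,\nabla^2\psi\rangle+\langle B,\nabla\psi\rangle+C\psi$ with $A$ symmetric is self‑adjoint with respect to $e^{-\phi}\dvol_g$ if and only if $B=\delta_\phi A$ (the zeroth‑order terms then match automatically). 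Hence $\csigma_{k,\phi}$ is variational if and only if $\delta_\phi\cT_{k-1,\phi}+\csigma_{k-1,\phi}\,\nabla\phi=0$ for all $\phi$ (for $k=1$ this reads $\delta_\phi g=-d\phi$; for $k=2$ it follows from $\delta_\phi\cRic_\phi=d\csigma_{1,\phi}$).

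Next I would apply Proposition~\ref{prop:divTk} with $k$ replaced by $k-1$, which identifies the left‑hand side of this condition with
\[ R_{k-1}(\phi) := \sum_{j=0}^{k-3} (-1)^j\,\cT_{k-3-j,\phi}\bigl(\cRic_\phi^{\,j+1}\cdot d\cRic_\phi\bigr) . \]
When $k\in\{1,2\}$ the sum is empty, so $R_{k-1}\equiv0$ and $\csigma_{k,\phi}$ is variational — this also reads off directly from Proposition~\ref{prop:first_variation1} and Proposition~\ref{prop:first_variation2}. When $\Rm\equiv0$ we have $d\cRic_\phi=\delta_\phi\Rm=0$ for every $\phi$, so again $R_{k-1}\equiv0$ and $\csigma_{k,\phi}$ is variational for every $k$. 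This settles the ``if'' direction.

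For the converse I would assume $k\ge3$ and that $\csigma_{k,\phi}$ is variational, hence $R_{k-1}(\phi)\equiv0$ for all $\phi$, and argue pointwise. Fix $p\in M$. The $0$‑, $1$‑, and $2$‑jets of $\phi$ at $p$, together with the totally symmetric part of its $3$‑jet, may be prescribed arbitrarily and independently; doing so makes $\cRic_\phi(p)$ an arbitrary symmetric endomorphism $P$ and $\cY_\phi(p)$ an arbitrary scalar $y$, while — because the totally symmetric part of $\nabla^3\phi(p)$ is annihilated by the antisymmetrization defining $d\cRic_\phi$ — the tensor $D:=d\cRic_\phi(p)$ depends only on $\xi:=\nabla\phi(p)$, affinely, with $\xi$‑linear part $-R_{imj}{}^{l}\xi_l$ in the slots $(i,m,j)$ by the Ricci identity. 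Substituting $P=\varepsilon Q$ into $R_{k-1}(\phi)(p)=0$ and expanding in $\varepsilon$, the coefficient of $\varepsilon$ is $\tfrac{y^{k-3}}{(k-3)!}\,(Q\cdot D)$, since $\cT_{k-3,\phi}$ collapses to $\tfrac{y^{k-3}}{(k-3)!}\,\id$ at $\cRic_\phi(p)=0$ by Proposition~\ref{prop:ordinary_sigmak} while every other term of $R_{k-1}$ is $O(\varepsilon^2)$; choosing the jet of $\phi$ so that $y\neq0$ forces $Q\cdot D=0$ for every symmetric $Q$ and every $\xi$. Thus the part of $D_{imj}$ symmetric under $i\leftrightarrow j$ vanishes, and its $\xi$‑linear piece gives $R_{imjl}+R_{jmil}=0$ at $p$ for all indices; a tensor with the algebraic symmetries of a curvature tensor that is in addition antisymmetric under interchange of its first and third slots is totally antisymmetric in all four slots, and the first Bianchi identity then forces it to vanish, so $\Rm(p)=0$. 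As $p$ was arbitrary, $\Rm\equiv0$.

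The hard part will be this last paragraph: confirming that the jets of $\phi$ at a point genuinely decouple $\cY_\phi$, $\cRic_\phi$, and $d\cRic_\phi$; identifying the $\varepsilon$‑linear term of $R_{k-1}$ under the rescaling $\cRic_\phi\mapsto\varepsilon\cRic_\phi$; and executing the linear‑algebraic passage from $R_{imjl}+R_{jmil}=0$ to $\Rm=0$ (when $\lambda=0$ one must also note that $\cY_\phi(p)\neq0$ still holds for all but one value of $\lv\nabla\phi(p)\rv$, which suffices by continuity). Since $\cT_{0,\phi}=\id$, the rescaling step is vacuous for $k=3$ — there $R_2(\phi)=\cRic_\phi\cdot d\cRic_\phi$ and the argument is immediate — so it is cleanest to treat that case first and then observe that $k\ge4$ differs only in the need to extract the $\varepsilon$‑linear term.
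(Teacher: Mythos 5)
Your proof is correct and follows essentially the same route as the paper's: reduce variationality to formal self-adjointness of the linearization $\cD_{k,\phi}$ (the paper's Lemma~\ref{lem:variational_characterization}), identify the obstruction with $\cS_{k,\phi}:=\delta_\phi\cT_{k-1,\phi}+\csigma_{k-1,\phi}\nabla\phi$ via Proposition~\ref{prop:divTk} (the paper packages this slightly differently through Corollary~\ref{cor:Dk}, but it is the same computation), and then show $\cS_{k,\phi}\equiv0$ for all $\phi$ forces $\Rm\equiv0$ when $k\geq3$. The only genuine difference is in the last step: the paper (Lemma~\ref{lem:to_flat}) runs a one-parameter family $\phi_t=\phi+t\psi$ with $\nabla\psi(p)$ prescribed and $\nabla^2\psi(p)=0$ and reads off the top $t^{2k-5}$-coefficient of $\cS_{k,\phi_t}|_p$, while you decouple the jets of $\phi$ at $p$, set $\cRic_\phi(p)=\varepsilon Q$, and read off the $\varepsilon$-linear coefficient; both maneuvers isolate the $j=0$ summand with $\cT_{k-3,\phi}$ collapsed to a scalar multiple of the identity and yield the condition $R_{imjl}+R_{jmil}=0$, so the discrepancy is cosmetic.
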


The proof of this result is completely analogous to the proof given by Branson and Gover~\cite{BransonGover2008} of the corresponding result in conformal geometry.  For technical convenience, we assume throughout this section that our manifolds are compact; since all statements are local statements this is not a meaningful restriction.

To begin, we make precise what it means for the weighted $\sigma_k$-curvature to be variational (cf.\ \cite{BransonGover2008}).  A \emph{local scalar invariant} is a scalar function defined on a manifold with density $(M^n,g,e^{-\phi}\dvol)$ which is built polynomially from $g$ and its inverse, the Levi-Civita connection, the Riemann curvature tensor, and $\phi$.  In particular, the weighted $\sigma_k$-curvatures $\csigma_{k,\phi}$ are local scalar invariants for all $k\in\bN$ and all $\lambda\in\bR$.  A local scalar invariant $L_\phi$ is \emph{variational on a Riemannian manifold $(M^n,g)$} if there is a functional $\mF\colon C^\infty(M)\to\bR$ such that
\[ \mF^\bullet(\phi)[\psi] = \int_M \psi L_\phi\,e^{-\phi}\dvol \]
for all $\phi,\psi\in C^\infty(M)$, where
\[ \mF^\bullet(\phi)[\psi] := \left.\frac{d}{dt}\right|_{t=0}\mF(\phi+t\psi) \]
as in Section~\ref{sec:first_variation}.

The following lemma provides a way to characterize variational local scalar invariants in terms of their linearizations (cf.\ \cite[Lemma~2]{BransonGover2008}).

\begin{lem}
\label{lem:variational_characterization}
Let $L_\phi$ be a local scalar invariant and let $D_\phi=L_\phi^\bullet$ be its linearization.
\begin{enumerate}
\item $L_\phi$ is variational on a Riemannian manifold $(M^n,g)$ if and only if $D_\phi$ is formally self-adjoint at all functions $\phi\in C^\infty(M)$.
\item If $L_\phi$ is variational then
\begin{equation}
\label{eqn:total_bullet}
\left(\int_M L_\phi\,e^{-\phi}\dvol\right)^\bullet[\psi] = -\int_M \left(L_\phi - D_\phi(1)\right)\psi\,e^{-\phi}\dvol .
\end{equation} 
\end{enumerate}
\end{lem}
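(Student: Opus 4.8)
The plan is to view the prescription $\psi\mapsto\alpha_\phi(\psi):=\int_M\psi\,L_\phi\,e^{-\phi}\dvol$ as a smooth $1$-form $\alpha$ on the vector space $C^\infty(M)$; then ``$L_\phi$ is variational'' means exactly that $\alpha=d\mF$ for some functional $\mF$. Since $C^\infty(M)$ is star-shaped about $0$, the Poincar\'e lemma applies: $\alpha$ is exact if and only if it is closed, and when it is closed a primitive is given by the usual homotopy formula $\mF(\phi)=\int_0^1\int_M\phi\,L_{t\phi}\,e^{-t\phi}\dvol\,dt$ (which one checks directly satisfies $\mF^\bullet(\phi)[\psi]=\alpha_\phi(\psi)$ using closedness). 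So the whole lemma reduces to computing $d\alpha$ together with one integration by parts.

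To compute $d\alpha$ at $\phi$, I would differentiate $\alpha_\phi(\psi)$ in a second direction $\chi$. Using $\frac{d}{ds}\big|_{s=0}L_{\phi+s\chi}=D_\phi(\chi)$ and $\frac{d}{ds}\big|_{s=0}e^{-(\phi+s\chi)}=-\chi e^{-\phi}$ (with $\dvol$ unchanged, as $g$ is fixed) gives
\[ \left.\frac{d}{ds}\right|_{s=0}\alpha_{\phi+s\chi}(\psi)=\int_M\psi\,D_\phi(\chi)\,e^{-\phi}\dvol-\int_M\psi\chi\,L_\phi\,e^{-\phi}\dvol . \]
Antisymmetrizing in $\psi$ and $\chi$, the symmetric term $\int_M\psi\chi\,L_\phi\,e^{-\phi}\dvol$ cancels, so $d\alpha$ vanishes at $\phi$ if and only if $\int_M\psi\,D_\phi(\chi)\,e^{-\phi}\dvol=\int_M\chi\,D_\phi(\psi)\,e^{-\phi}\dvol$ for all $\psi,\chi\in C^\infty(M)$ --- that is, if and only if $D_\phi$ is formally self-adjoint with respect to the weighted measure $e^{-\phi}\dvol$. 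Combined with the first paragraph this establishes part~(1).

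For part~(2), assuming $L_\phi$ variational, the same differentiation of $\int_M L_\phi\,e^{-\phi}\dvol$ (now with the ``test function'' equal to $1$) yields $\left(\int_M L_\phi\,e^{-\phi}\dvol\right)^\bullet[\psi]=\int_M D_\phi(\psi)\,e^{-\phi}\dvol-\int_M\psi\,L_\phi\,e^{-\phi}\dvol$; applying the self-adjointness of $D_\phi$ from part~(1) to the pair $(\psi,1)$ turns the first integral into $\int_M\psi\,D_\phi(1)\,e^{-\phi}\dvol$, which gives~\eqref{eqn:total_bullet}. The only points requiring care are bookkeeping ones: carrying the factor $e^{-\phi}\dvol$ through every variation so that the correct (weighted) notion of formal self-adjointness emerges, and using compactness of $M$ to make the integration by parts against the constant $1$ legitimate with no boundary terms. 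There is no substantive analytic obstacle; the entire content is the single second-variation identity displayed above.
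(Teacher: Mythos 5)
Your proof is correct and follows essentially the same route as the paper. The paper's appeal to the contractibility of $C^\infty(M)$ and the symmetry of the "putative second variation" $\mS(\eta,\omega)=\int_M\eta(D_\phi-L_\phi)\omega\,e^{-\phi}\dvol$ is exactly your Poincar\'e-lemma/closedness-of-$\alpha$ argument phrased a bit more tersely, and your derivation of part~(2) by taking one slot equal to $1$ and invoking symmetry is the paper's step $\bigl(\int L_\phi e^{-\phi}\dvol\bigr)^\bullet[\psi]=\mS(1,\psi)$ verbatim.
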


\begin{proof}

The proof of the first statement is essentially the same as the proof of~\cite[Lemma~2]{BransonGover2008}, though we shall present it here for convenience.  Since $C^\infty(M)$ is contractible, we see that $L_\phi$ is variational on $(M^n,g)$ if and only if the putative second variation
\begin{equation}
\label{eqn:mS}
\mS(\eta,\omega) := \int_M \eta\left( L_\phi e^{-\phi}\dvol\right)^\bullet = \int_M \eta\left(D_\phi - L_\phi\right)\omega\,e^{-\phi}\dvol
\end{equation}
is symmetric at all functions $\phi$.  From the right-hand side of~\eqref{eqn:mS} it is clear that this holds if and only if $D_\phi$ is formally self-adjoint.

To see the second statement, simply observe that
\[ \left(\int_M L_\phi\,e^{-\phi}\dvol\right)^\bullet[\psi] = \mS(1,\psi) . \]
Since $L_\phi$ is variational, $D_\phi$ is formally self-adjoint, and the result then follows immediately from~\eqref{eqn:mS}.
\end{proof}

From Lemma~\ref{lem:variational_characterization} we see that the first step in proving Theorem~\ref{thm:variational_status} is to compute the linearization of $\csigma_{k,\phi}$ for each $k$.

\begin{prop}
\label{prop:diff}
Let $(M^n,g,e^{-\phi}\dvol)$ be a manifold with density, let $\lambda\in\bR$, and let $k\in\bN$.  The linearization $\cD_{k,\phi}$ of $\csigma_{k,\phi}$ is
\begin{equation}
\label{eqn:Dk}
\cD_{k,\phi}(\psi) := \left\lp\cT_{k-1,\phi},\nabla^2\psi\right\rp - \csigma_{k-1,\phi}\lp\nabla\phi,\nabla\psi\rp + \lambda\csigma_{k-1,\phi}\psi .
\end{equation}
\end{prop}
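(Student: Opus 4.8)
The plan is to differentiate $\csigma_{k,\phi}=\sigma_k^\infty(\cY_\phi;\cRic_\phi)$ under a pure change of density $\phi\mapsto\phi+t\psi$ (with $g$ fixed), isolating the contributions coming from the scalar slot $\cY_\phi$ and the matrix slot $\cRic_\phi$ separately. \emph{Step 1 (building blocks).} Since the Hessian operator $\phi\mapsto\nabla^2\phi$ is linear in $\phi$ for fixed $g$, one has $\cRic_\phi^\bullet=\nabla^2\psi$; and reading off from~\eqref{eqn:cY} gives $\cY_\phi^\bullet=-\lp\nabla\phi,\nabla\psi\rp+\lambda\psi$. These are the only quantities entering $\csigma_{k,\phi}$ that move, so the whole computation reduces to understanding the differential of the weighted elementary symmetric function $\sigma_k^\infty$ in each of its two arguments.

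\emph{Step 2 (a chain rule for $\sigma_k^\infty$).} The heart of the matter is the identity
\[ \csigma_{k,\phi}^\bullet = \csigma_{k-1,\phi}\,\cY_\phi^\bullet + \lp\cT_{k-1,\phi},\cRic_\phi^\bullet\rp . \]
I would prove this by first rewriting $\csigma_{k,\phi}$ via~\eqref{eqn:ordinary_sigmak} as
\[ \csigma_{k,\phi} = \sum_{j=0}^k \frac{(\cY_\phi)^j}{j!}\,\sigma_{k-j}(\cRic_\phi), \]
where $\sigma_{k-j}(\cRic_\phi)$ denotes the ordinary $(k-j)$-th elementary symmetric function of the eigenvalues of $\cRic_\phi$. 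Linearizing term by term: the variation hitting the scalar factor contributes $\cY_\phi^\bullet\sum_{j=1}^k\frac{(\cY_\phi)^{j-1}}{(j-1)!}\sigma_{k-j}(\cRic_\phi)$, which after reindexing and reapplying~\eqref{eqn:ordinary_sigmak} is precisely $\cY_\phi^\bullet\,\csigma_{k-1,\phi}$. The variation hitting the matrix factor is handled with the classical fact that the differential of $\sigma_m$ at $P$ in a direction $\dot P$ equals $\lp T_{m-1}(P),\dot P\rp$ for the ordinary Newton transformation $T_{m-1}$, together with the elementary identity
\[ T_m^\infty(\mu_0;P) = \sum_{j=0}^m \frac{\mu_0^j}{j!}\,T_{m-j}(P), \]
which follows at once from the definition of $T_m^\infty$ and~\eqref{eqn:ordinary_sigmak}; this packages $\sum_{j=0}^{k-1}\frac{(\cY_\phi)^j}{j!}T_{k-1-j}(\cRic_\phi)$ into $\cT_{k-1,\phi}$ (Definition~\ref{defn:Tk}).

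\emph{Step 3 (conclusion).} Substituting $\cRic_\phi^\bullet=\nabla^2\psi$ and $\cY_\phi^\bullet=-\lp\nabla\phi,\nabla\psi\rp+\lambda\psi$ from Step~1 into the identity of Step~2 yields exactly~\eqref{eqn:Dk}. As a sanity check, for $k=1$ one has $\cT_{0,\phi}=g$ and $\csigma_{0,\phi}=1$, and the formula reduces to $\cD_{1,\phi}(\psi)=\Delta_\phi\psi+\lambda\psi$, consistent with the $h=0$ case of Lemma~\ref{lem:simple_variation1}. The only nontrivial step is Step~2, and even there the work is purely algebraic bookkeeping --- the reindexing that recovers $\csigma_{k-1,\phi}$ and the assembly of the weighted Newton tensor from the ordinary ones. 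An alternative that avoids~\eqref{eqn:ordinary_sigmak} is a direct induction on $k$ from the recursion of Definition~\ref{defn:defn1}, differentiating both sides; this works too but is more cumbersome because of the power-sum terms, so I would favour the route above. I do not foresee any genuine obstacle.
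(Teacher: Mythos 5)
Your proof is correct, and it takes a genuinely different route from the paper's. The paper proves Proposition~\ref{prop:diff} by strong induction on $k$, using the recursive characterization of $\sigma_k^\infty$ from Definition~\ref{defn:defn1} (hence working with power sums $\tr\cRic_\phi^{j+1}$) and grinding through a double-summation reindexing in the inductive step. You instead bypass the recursion entirely by using the closed form from Proposition~\ref{prop:ordinary_sigmak},
$\csigma_{k,\phi}=\sum_{j=0}^k\frac{\cY_\phi^j}{j!}\sigma_{k-j}(\cRic_\phi)$, and differentiating term by term. This separates cleanly into the scalar contribution (which telescopes back to $\csigma_{k-1,\phi}\cY_\phi^\bullet$) and the matrix contribution, which you handle via the classical identity $d\sigma_m(P)[\dot P]=\lp T_{m-1}(P),\dot P\rp$ together with the easily verified decomposition $T_m^\infty(\mu_0;P)=\sum_{j=0}^m\frac{\mu_0^j}{j!}T_{m-j}(P)$. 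I have checked both of these: the first is standard (and holds as a polynomial identity, so there is no issue with repeated eigenvalues), and the second follows by plugging~\eqref{eqn:ordinary_sigmak} into the definition of $T_m^\infty$ and swapping the double sum. The payoff of your route is that it isolates the purely algebraic ``chain rule''
$\csigma_{k,\phi}^\bullet=\csigma_{k-1,\phi}\,\cY_\phi^\bullet+\lp\cT_{k-1,\phi},\cRic_\phi^\bullet\rp$
as a standalone statement about the weighted elementary symmetric functions, which the paper never makes explicit (it is buried in the inductive bookkeeping); this would also immediately yield the variation formulas with respect to $g$ if one fed in the appropriate building blocks. The cost is that you import the classical derivative formula for $\sigma_m$, which the paper avoids by remaining self-contained within the weighted formalism. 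Both are sound; yours is arguably the more transparent of the two.
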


\begin{proof}

Our proof is by strong induction.  Recalling that we are linearizing in $\phi$ only, we observe that
\begin{equation}
\label{eqn:diff_blocks}
\cRic_\phi^\bullet = \nabla^2\psi \quad\text{and}\quad \cY_\phi^\bullet = -\lp\nabla\phi,\nabla\psi\rp + \lambda\psi .
\end{equation}
It follows readily from~\eqref{eqn:diff_blocks} that $\csigma_{1,\phi}^\bullet=\left(\Delta_\phi+\lambda\right)\psi$; i.e.\ \eqref{eqn:Dk} is valid for $k=1$.

Suppose now that~\eqref{eqn:Dk} is valid for $\cD_{j,\phi}$ for $1\leq j\leq k-1$.  It follows from the definitions of the weighted $\sigma_k$-curvature and the weighted Newton tensor that
\begin{align*}
\sum_{j=0}^{k-2}(-1)^j\csigma_{k-2-j,\phi}\tr P^{j+1} & = (k-1)\csigma_{k-1,\phi} - \cY_\phi\csigma_{k-2,\phi} \\
\sum_{j=0}^{k-2}(-1)^j \cT_{k-2-j,\phi}\tr P^{j+1} & = \sum_{j=0}^{k-1}(-1)^j\left[(k-1-j)\csigma_{k-1-j,\phi} - \cY_\phi\csigma_{k-2-j,\phi}\right]\cRic_\phi^j .
\end{align*}
On the other hand, the inductive hypothesis implies that
\begin{align*}
k\csigma_{k,\phi}^\bullet & = \cY_\phi\left\lp\cT_{k-2,\phi},\nabla^2\psi\right\rp - \cY_\phi\csigma_{k-2,\phi}\lp\nabla\phi,\nabla\psi\rp + \lambda\cY_\phi\csigma_{k-2,\phi}\psi \\
& \quad - \csigma_{k-1,\phi}\lp\nabla\phi,\nabla\psi\rp + \lambda\csigma_{k-1,\phi}\psi + \sum_{j=0}^{k-1}(-1)^j(j+1)\csigma_{k-1-j,\phi}\left\lp\cRic_\phi^j,\nabla^2\psi\right\rp \\
& \quad + \sum_{j=0}^{k-2}(-1)^j\left[\lp \cT_{k-2-j,\phi},\nabla^2\psi\rp - \csigma_{k-2-j,\phi}\lp\nabla\phi,\nabla\psi\rp + \lambda\csigma_{k-2-j,\phi}\psi\right]\tr P^{j+1} .
\end{align*}
Combining these formulae yields the desired result.
\end{proof}

As an immediate consequence of Proposition~\ref{prop:divTk} and Proposition~\ref{prop:diff} we have the following equivalent formulation of the linearization of $\csigma_{k,\phi}$.

\begin{cor}
\label{cor:Dk}
Let $(M^n,g,e^{-\phi}\dvol)$ be a manifold with density, let $\lambda\in\bR$, and let $k\in\bN$.  The linearization $\cD_{k,\phi}$ of $\csigma_{k,\phi}$ is
\begin{equation}
\label{eqn:Dk_sa}
\begin{split}
\cD_{k,\phi}(\psi) & = \delta_\phi\left(\cT_{k-1,\phi}(\nabla\psi)\right) + \lambda\csigma_{k-1,\phi}\psi \\
& \quad - \sum_{j=0}^{k-3} (-1)^j\cT_{k-3-j,\phi}\left(\cRic_\phi^{j+1}\cdot d\cRic_\phi,\nabla\psi\right) ,
\end{split}
\end{equation}
where we interpret the empty sum to equal zero.
\end{cor}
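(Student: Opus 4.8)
The plan is to derive \eqref{eqn:Dk_sa} directly from the first expression \eqref{eqn:Dk} for the linearization given in Proposition~\ref{prop:diff}, by converting its Hessian term into a weighted divergence. The one computational ingredient I would need is the weighted Leibniz rule: for any section $T$ of $S^2T^\ast M$ and any $\psi\in C^\infty(M)$,
\[ \delta_\phi\bigl(T(\nabla\psi)\bigr) = (\delta_\phi T)(\nabla\psi) + \lp T,\nabla^2\psi\rp . \]
This is just the weighted version of the identity $\divsymb\bigl(T(\nabla\psi)\bigr)=(\divsymb T)(\nabla\psi)+\lp T,\nabla^2\psi\rp$, obtained by subtracting $\lp\nabla\phi,T(\nabla\psi)\rp$ from both sides and using the conventions of Section~\ref{sec:smms} for $\delta_\phi$ on vector fields and on symmetric two-tensors; I would record it as a short preliminary observation. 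Applying it with $T=\cT_{k-1,\phi}$ and solving for the Hessian term gives
\[ \lp\cT_{k-1,\phi},\nabla^2\psi\rp = \delta_\phi\bigl(\cT_{k-1,\phi}(\nabla\psi)\bigr) - (\delta_\phi\cT_{k-1,\phi})(\nabla\psi) . \]

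Next I would substitute the divergence formula of Proposition~\ref{prop:divTk}. Replacing $k$ by $k-1$ in \eqref{eqn:divTk} yields
\[ \delta_\phi\cT_{k-1,\phi} = -\csigma_{k-1,\phi}\nabla\phi + \sum_{j=0}^{k-3}(-1)^j\cT_{k-3-j,\phi}\bigl(\cRic_\phi^{j+1}\cdot d\cRic_\phi\bigr) , \]
so that, pairing with $\nabla\psi$,
\[ (\delta_\phi\cT_{k-1,\phi})(\nabla\psi) = -\csigma_{k-1,\phi}\lp\nabla\phi,\nabla\psi\rp + \sum_{j=0}^{k-3}(-1)^j\cT_{k-3-j,\phi}\bigl(\cRic_\phi^{j+1}\cdot d\cRic_\phi,\nabla\psi\bigr) . \]
Feeding this, together with the previous display, back into \eqref{eqn:Dk}, the two copies of $\csigma_{k-1,\phi}\lp\nabla\phi,\nabla\psi\rp$ cancel and what is left is precisely \eqref{eqn:Dk_sa}.

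Finally I would dispose of the low-order cases, where the empty-sum convention is in force: for $k=1$ one has $\cT_{0,\phi}=g$ and both sums vanish, so \eqref{eqn:Dk_sa} reduces to $\cD_{1,\phi}(\psi)=\delta_\phi(\nabla\psi)+\lambda\psi=(\Delta_\phi+\lambda)\psi$, in agreement with the base case inside the proof of Proposition~\ref{prop:diff}; for $k=2$ the sum in \eqref{eqn:divTk} is empty and \eqref{eqn:Dk_sa} reads $\cD_{2,\phi}(\psi)=\delta_\phi\bigl(\cT_{1,\phi}(\nabla\psi)\bigr)+\lambda\csigma_{1,\phi}\psi$. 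I do not anticipate a genuine obstacle: the argument is essentially a one-line manipulation, and the only points that demand attention are keeping the sign conventions of Section~\ref{sec:smms} straight in the weighted Leibniz rule, so that the $\nabla\phi$-terms cancel rather than reinforce, and matching the index shift $k\mapsto k-1$ in \eqref{eqn:divTk} with the empty-sum conventions on both sides.
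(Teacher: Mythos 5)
Your proposal is correct and matches the paper's intended argument: the paper states Corollary~\ref{cor:Dk} is an immediate consequence of Proposition~\ref{prop:diff} and Proposition~\ref{prop:divTk}, and your computation — applying the weighted Leibniz rule $\delta_\phi\bigl(T(\nabla\psi)\bigr) = (\delta_\phi T)(\nabla\psi) + \lp T,\nabla^2\psi\rp$ to $T=\cT_{k-1,\phi}$, substituting the divergence formula~\eqref{eqn:divTk} shifted by one index, and canceling the $\csigma_{k-1,\phi}\lp\nabla\phi,\nabla\psi\rp$ terms — is precisely that derivation. Your separate check of the low-order cases $k=1,2$ against the empty-sum convention is sound (and for $k=1$ necessary, since~\eqref{eqn:divTk} is only stated for $k\in\bN$).
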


From~\eqref{eqn:Dk_sa} it is clear that $\cD_{k,\phi}$ is formally self-adjoint if and only if the last summation is formally self-adjoint.  As we prove below, this holds if and only if the manifold is flat.

\begin{lem}
\label{lem:to_flat}
Let $(M^n,g)$ be a Riemannian manifold and fix a parameter $\lambda\in\bR$ and an integer $k\in\bN$.  For each $\phi\in C^\infty(M)$, define
\begin{equation}
\label{eqn:cS}
\cS_{k,\phi} := \sum_{j=0}^{k-3} (-1)^j\cT_{k-3-j,\phi}\left(\cRic_\phi^{j+1}\cdot d\cRic_\phi\right),
\end{equation}
where all tensors are determined by the manifold with density $(M^n,g,e^{-\phi}\dvol)$ and the parameter $\lambda$.  Then $\cS_{k,\phi}=0$ for all $\phi\in C^\infty(M)$ if and only if $k\in\{1,2\}$ or $g$ is flat.
\end{lem}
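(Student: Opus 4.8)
The \emph{if} direction is immediate. If $k\in\{1,2\}$ the sum \eqref{eqn:cS} is empty, and if $g$ is flat then, as recorded in Section~\ref{sec:smms}, $d\cRic_\phi=\delta_\phi\Rm=0$ for every $\phi\in C^\infty(M)$ and every $\lambda$, so every summand of \eqref{eqn:cS} — each of which contains $d\cRic_\phi$ linearly — vanishes. The content of the lemma is the \emph{only if} direction, which I would prove contrapositively: assuming $k\geq3$ and $\cS_{k,\phi}=0$ for all $\phi\in C^\infty(M)$, I will deduce $\Rm\equiv0$. Since $\cS_{k,\phi}$ is a local invariant, it suffices to fix a point $p$ and show that the vanishing of $\cS_{k,\phi}(p)$ for all choices of $\phi$ forces $\Rm$ to vanish at $p$.

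First I would reduce to an algebraic identity. As $\phi$ ranges over smooth functions we may prescribe the $2$-jet of $\phi$ at $p$ arbitrarily; in particular $v:=\nabla\phi(p)\in T_pM$ is arbitrary and, since $\cRic_\phi=\Ric+\nabla^2\phi-\lambda g$, the symmetric endomorphism $A:=\cRic_\phi(p)$ is arbitrary (when $\lambda\neq0$ the scalar $\mu_0:=\cY_\phi(p)$ may also be taken arbitrary, but this will not be needed). The key point is that, by $d\cRic_\phi=\delta_\phi\Rm=\delta\Rm-\iota_{\nabla\phi}\Rm$ from Section~\ref{sec:smms}, the tensor $d\cRic_\phi$ evaluated at $p$ depends on $\phi$ only through $v$, and linearly: it equals $\delta\Rm$ at $p$ minus a term built from $\Rm$ at $p$ and $v$. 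Evaluating \eqref{eqn:cS} at $p$, and recalling $\cT_{m,\phi}(p)=T_m^\infty(\mu_0;A)$, the hypothesis reads
\[ \sum_{j=0}^{k-3}(-1)^j\,T_{k-3-j}^\infty\!\bigl(\mu_0;A\bigr)\Bigl(A^{j+1}\cdot d\cRic_\phi(p)\Bigr)=0 \]
for all $A$, $v$ (and the corresponding $\mu_0$). Collecting the part linear in $v$ kills the $\delta\Rm$ contribution and leaves
\[ \sum_{j=0}^{k-3}(-1)^j\,T_{k-3-j}^\infty\!\bigl(\mu_0;A\bigr)\Bigl(A^{j+1}\cdot(\iota_v\Rm)\Bigr)=0 . \]

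Next I would extract the curvature. Diagonalizing $A$ in an orthonormal frame $\{E_i\}$ of $T_pM$ with eigenvalues $\mu=(\mu_1,\dots,\mu_n)$, Lemma~\ref{lem:newton_eigenvalue} makes $T_m^\infty(\mu_0;A)$ diagonal in this frame with $i$-th entry $\sigma_m^\infty(\mu_0;\omu(i))$. Writing out the contraction $A^{j+1}\cdot(\iota_v\Rm)$ and taking $v=E_d$, the $c$-th component of the previous identity becomes, after discarding the $a=c$ term by antisymmetry of $\Rm$,
\[ \sum_{a\neq c}\Rm(E_d,E_a,E_c,E_a)\,\mu_a\sum_{j=0}^{k-3}(-1)^j\sigma_{k-3-j}^\infty(\mu_0;\omu(c))\,\mu_a^j=0 . \]
Iterating Lemma~\ref{lem:remove_one} identifies the inner sum with $\sigma_{k-3}^\infty(\mu_0;\omu(c,a))$, the weighted symmetric function of the eigenvalues of $A$ with the $c$-th and $a$-th deleted. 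By Proposition~\ref{prop:ordinary_sigmak} its homogeneous part of top degree in $\mu$ is the ordinary $\sigma_{k-3}(\omu(c,a))$ and involves no $\mu_0$; since homogeneous components of a polynomial identity in $\mu$ vanish separately, for every $c$ and $d$ we obtain
\[ \sum_{a\neq c}\Rm(E_d,E_a,E_c,E_a)\,\mu_a\,\sigma_{k-3}\!\bigl(\omu(c,a)\bigr)=0\qquad\text{identically in }\mu . \]

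Finally I would run a short combinatorial argument. Fix $c$ and $d$ and put $R_a:=\Rm(E_d,E_a,E_c,E_a)$ for $a\neq c$. Given any $k-2$ distinct indices $a_0,b_1,\dots,b_{k-3}\in\{1,\dots,n\}\setminus\{c\}$ (possible since $k\leq n$), substituting $\mu$ with $\mu_{a_0}=s$, $\mu_{b_1}=\dots=\mu_{b_{k-3}}=t$, and all other entries $0$ collapses the identity to $s\,t^{k-3}\bigl(R_{a_0}+\sum_{i=1}^{k-3}R_{b_i}\bigr)=0$, so every $(k-2)$-element subset of $\{R_a\}_{a\neq c}$ sums to zero. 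When $k=3$ this gives $R_a=0$ directly; when $k\geq4$, the inequalities $3\leq k\leq n$ make the Johnson graph $J(n-1,k-2)$ connected, so all $R_a$ are equal, and being $(k-2)$-summable to zero they all vanish. Thus $\Rm(E_d,E_a,E_c,E_a)=0$ for all indices and in every orthonormal frame at $p$, hence every sectional curvature of $g$ at $p$ vanishes, so $\Rm\equiv0$ and $g$ is flat. I expect the main obstacle to be the bookkeeping in the reduction — confirming that $d\cRic_\phi(p)$ sees $\phi$ only through $\nabla\phi(p)$, and correctly tracking the index contractions through the weighted Newton transformations — together with the fact that rank-one test matrices $A$ are degenerate for $k\geq4$ (their contributions cancel identically), which is precisely what forces the higher-rank substitution and the connectivity argument above.
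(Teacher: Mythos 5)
Your approach is genuinely different from the paper's, and it is correct as far as it goes, but it has a gap: it only covers $k\leq n$, whereas the lemma is stated for arbitrary $k\in\bN$.

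Concretely, your combinatorial step requires choosing $k-2$ distinct indices in $\{1,\dots,n\}\setminus\{c\}$, and you flag the restriction yourself (``possible since $k\leq n$''). More fundamentally, when $k>n+1$ the top-degree-in-$\mu$ part of $\sigma^\infty_{k-3}(\mu_0;\omu(c,a))$ is no longer $\sigma_{k-3}(\omu(c,a))$ --- that quantity vanishes identically since $\omu(c,a)$ has only $n-2$ entries --- so the homogenization step does not produce the coefficient you need. This failure is exactly anticipated by the Remark immediately following Lemma~\ref{lem:to_flat}: the ``natural generalization'' of the Branson--Gover argument (which is essentially what you have reconstructed, via diagonalization, Lemma~\ref{lem:newton_eigenvalue}, and homogeneity in $\mu$) only proves the result for $k\leq n$, and the author adopts a different argument precisely to avoid that restriction.

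The paper's proof instead fixes $\phi$ and perturbs along $\phi_t=\phi+t\psi$ with $\psi(p)=0$, $\nabla\psi(p)=X_p$, $\nabla^2\psi\rv_p=0$. Then $\cRic_{\phi_t}\rv_p$ is unchanged, $d\cRic_{\phi_t}\rv_p$ is affine in $t$, and crucially $\cY_{\phi_t}(p)$ acquires a $-t^2$ term from $\lv\nabla\phi_t\rv^2$. Consequently $\cS_{k,\phi_t}\rv_p$ is a $T_pM$-valued polynomial of degree exactly $2k-5$ in $t$, and its top coefficient is $(-1)^k\tfrac{1}{(k-3)!}\cRic_\phi\cdot\Rm(\cdot,\cdot,X_p,\cdot)\rv_p$ with a nonzero combinatorial factor independent of $n$. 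Reading off this coefficient gives $\cRic_\phi\cdot\Rm=0$ for all $\phi$, hence $\nabla^2\phi\cdot\Rm=0$ for all $\phi$, hence $\Rm\equiv0$. The crucial point you are missing is the quadratic dependence of $\cY_\phi$ on $\nabla\phi$: by linearizing in $v=\nabla\phi(p)$ you deliberately throw this information away (the $\mu_0$-dependence on $v$ is quadratic and hence drops out of the linear term), while the paper extracts exactly this information by going to top order in $t$. That is what lets the paper's argument work uniformly in $k$.

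To be clear, for $3\leq k\leq n$ your argument is sound: the reduction to an algebraic identity at a point, the use of $d\cRic_\phi=\delta_\phi\Rm$ to isolate the $v$-linear part, the identification of the inner sum with $\sigma^\infty_{k-3}(\mu_0;\omu(c,a))$ via Lemma~\ref{lem:remove_one}/Lemma~\ref{lem:newton_eigenvalue}, the homogenization, and the Johnson-graph step all check out, and they give the sectional-curvature vanishing at $p$. Since Theorem~\ref{thm:variational_status} also restricts to $k\leq n$, your proof is adequate for the main application, but it does not establish the lemma in the generality stated.
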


\begin{remark}

This argument is slightly different than the one given in~\cite{BransonGover2008}.  The reason for the difference is that $\csigma_{k,\phi}$ is in general nontrivial when $k>n$, while the natural generalization of the argument given in~\cite{BransonGover2008} only proves Lemma~\ref{lem:to_flat} when $k\leq n$.
\end{remark}

\begin{proof}[Proof of Lemma~\ref{lem:to_flat}]

Clearly $\cS_{k,\phi}=0$ if $k\in\{1,2\}$.  As pointed out in Section~\ref{sec:smms}, if $g$ is flat then $d\cRic_\phi=0$, and hence $\cS_{k,\phi}=0$, for all $\phi\in C^\infty(M)$.  Therefore it suffices to show that if $k\geq 3$ and $\cS_{k,\phi}=0$ for all $\phi\in C^\infty(M)$, then $g$ is flat.

To that end, fix $p\in M$ and $\phi\in C^\infty(M)$.  Let $X_p\in T_pM$ be such that $\lv X_p\rv^2=2$.  Choose a function $\psi\in C^\infty(M)$ such that $\psi(p)=0$, $\lv\nabla\psi\rv^2(p)=X_p$ and $\nabla^2\psi\rv_p=0$.  Set $\phi_t=\phi+t\psi$.  The choice of $\psi$ implies that
\begin{align*}
\cY_{\phi_t}(p) & = \cY_\phi(p) - t\lp\nabla\phi\rv_p,X_p\rp - t^2 , \\
\cRic_{\phi_t}\rv_p & = \cRic_\phi\rv_p, \\
d\cRic_{\phi_t}\rv_p & = d\cRic_\phi\rv_p - t\Rm(\cdot,\cdot,X_p,\cdot) .
\end{align*}
It is clear that $\cS_{k,\phi_t}\rv_p$ is a $T_pM$-valued polynomial in $t$ of degree $2k-5$.  By Proposition~\ref{prop:ordinary_sigmak} we see that
\[ \cS_{k,\phi_t}\rv_p = (-1)^{k}\frac{t^{2k-5}}{(k-3)!}\cRic_\phi\cdot\Rm(\cdot,\cdot,X_p,\cdot)\rv_p + \mbox{l.o.t.} \]
Since $p$ and $X_p$ are arbitrary, this implies that $\cRic_\phi\cdot\Rm=0$ for all $\phi\in C^\infty(M)$, and hence $\nabla^2\phi\cdot\Rm=0$ for all $\phi\in C^\infty(M)$.  Arguing as above --- namely by letting $\nabla^2\phi\rv_p = X_p\otimes X_p$ for $X_p\in T_pM$ arbitrary --- shows that $\Rm\equiv 0$, as desired.
\end{proof}

The proof of Theorem~\ref{thm:variational_status} now amounts to showing that the last summation in~\eqref{eqn:Dk_sa} vanishes for all $\phi\in C^\infty(M)$.

\begin{proof}[Proof of Theorem~\ref{thm:variational_status}]

We argue as in the proof of~\cite[Theorem~1]{BransonGover2008}.  From Lemma~\ref{lem:variational_characterization} and Corollary~\ref{cor:Dk} we see that $\csigma_{k,\phi}$ is variational if and only if for each $\phi\in C^\infty(M)$, the differential operator $\cD_{k,\phi}$ is formally self-adjoint with respect to $e^{-\phi}\dvol$.  Hence $\csigma_{k,\phi}$ is variational if and only if
\begin{equation}
\label{eqn:Sk_sa}
\int_M \left\lp \cS_{k,\phi}, \eta\nabla\omega-\omega\nabla\eta\right\rp\,e^{-\phi}\dvol = 0
\end{equation}
for all $\eta,\omega\in C^\infty(M)$ and for $\cS_{k,\phi}$ defined in~\eqref{eqn:cS}.  Taking $\eta\equiv1$ and $\omega$ arbitrary shows that~\eqref{eqn:Sk_sa} holds if and only if $\delta_\phi \cS_{k,\phi}=0$.  Hence~\eqref{eqn:Sk_sa} holds if and only if
\[ \int_M \left\lp \cS_{k,\phi},\eta\nabla\omega\right\rp\,e^{-\phi}\dvol = 0 \]
for all $\eta,\omega\in C^\infty(M)$, or equivalently, if and only if $\cS_{k,\phi}=0$.  That is, we have just shown that $\csigma_{k,\phi}$ is variational if and only if $\cS_{k,\phi}=0$ for each $\phi\in C^\infty(M)$.  The desired conclusion now follows from Lemma~\ref{lem:to_flat}.
\end{proof}

We conclude with a few words about how to find functionals $\hmW_k\colon\mC_1(g)\to\bR$ whose critical points have $\hsigma_{k,\phi}$ constant.  If $k\in\{1,2\}$, this is easily accomplished using Proposition~\ref{prop:first_variation1} and Proposition~\ref{prop:first_variation2}; i.e.\ simply take $\hmW_k=\mW_k$ in these cases.  When $k\geq 3$, Theorem~\ref{thm:variational_status} implies that we must assume that $(M^n,g)$ is flat, and then Lemma~\ref{lem:variational_characterization} and Proposition~\ref{prop:diff} imply that
\[ \left(\int_M \csigma_{k,\phi}e^{-\phi}\dvol\right)^\bullet = -\int_M \left(\csigma_{k,\phi} - \lambda\csigma_{k-1,\phi}\right)\psi\,e^{-\phi}\dvol \]
through variations of $\phi$.  From this it is easy to write down a linear combination $\hmW_k$ of the total weighted $\sigma_j$-curvature functionals with $j\in\{1,\dotsc,k\}$ such that
\[ \hmW_k(\phi,\tau)^\bullet = -\int_M \tau^k\hsigma_{k,\phi}\psi\,(4\pi\tau)^{-\frac{n}{2}}e^{-\phi}\dvol . \]
\section{Gradient Ricci Solitons as Local Extrema}
\label{sec:local_extrema}

Complementing the global classification of critical points of the $\hmW_k$-functionals in Section~\ref{sec:obata}, we show in this section that shrinking gradient Ricci solitons locally minimize (resp.\ maximize) the $\hmW_k$-functional for $k$ odd (resp.\ even) in the cases when the weighted $\sigma_k$-curvature is variational.  Indeed, we show that shrinking gradient Ricci solitons are strict local extrema except when they factor through a shrinking Gaussian.  Moreover, in the latter cases the only nondegeneracies come from the freedom to choose the basepoint of a shrinking Gaussian or, when the manifold is a shrinking Gaussian, from the scale-invariance of the shrinking Gaussians.  As throughout this article, these results and their proofs are close analogues of similar results of Viaclovsky in the conformal setting~\cite{Viaclovsky2000}.

\subsection{Local extrema of $\mW_1$}
\label{subsec:local_extrema/1}

In order to illustrate our techniques, we first show that shrinking gradient Ricci solitons are local extrema for the $\mW_1$-functional.  Of course, Theorem~\ref{thm:obata/1} and Perelman's proof~\cite{Perelman1} that minimizers of the $\nu$-entropy exist on compact shrinking gradient Ricci solitons proves that they are global extrema (cf.\ \cite{CarrilloNi2008}), but the technique used to prove the local result is also applicable to the total weighted $\sigma_k$-curvature functionals.  The proof of the local result requires the following weighted analogue of the Lichnerowicz--Obata theorem proven by Cheng and Zhou~\cite{ChengZhou2013}.

\begin{prop}
\label{prop:weighted_lichnerowicz}
Let $(M^n,g,e^{-\phi}\dvol)$ be a Riemannian manifold with density and suppose that $\Ric_\phi\geq\lambda g>0$.  Then $\lambda_1(-\Delta_\phi)\geq\lambda$ for
\[ \lambda_1\left(\Delta_\phi\right) := \inf_{u\in C_0^\infty(M)\setminus\{0\}}\left\{ \frac{\int_M \lv\nabla u\rv^2e^{-\phi}\dvol}{\int_M u^2e^{-\phi}\dvol} \colon \int_M ue^{-\phi}\dvol = 0 \right\} \]
the first nonzero eigenvalue of the weighted Laplacian.  Moreover, if $\lambda_1(-\Delta_\phi)=\lambda$ then there is a manifold with density $(N^{n-1},h,e^{-\psi}\dvol)$ such that $\Ric_\psi\geq\lambda h$ and $(M^n,g)$ is isometric to $(\bR\times N^{n-1},dx^2\oplus h)$ and $\phi=\frac{\lambda}{2}x^2 + \psi$.
\end{prop}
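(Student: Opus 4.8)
The plan is to recall the weighted Lichnerowicz--Obata argument of Cheng and Zhou~\cite{ChengZhou2013}, which runs exactly parallel to the classical one but uses the weighted Bochner formula of Section~\ref{sec:smms} and integrates against $e^{-\phi}\dvol$. Since $\Ric_\phi \geq \lambda g > 0$, the measure $e^{-\phi}\dvol$ is finite~\cite{Morgan2005}, and the results of~\cite{ChengZhou2013} show that $-\Delta_\phi$ has discrete spectrum with eigenfunctions enjoying enough integrability and decay to justify the integrations by parts below. I would fix a first eigenfunction $u$, so that $-\Delta_\phi u = \lambda_1 u$ for $\lambda_1 := \lambda_1(-\Delta_\phi)$ and $\int_M u\,e^{-\phi}\dvol = 0$. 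Applying the weighted Bochner formula to $u$, integrating, using $\int_M \Delta_\phi\lv\nabla u\rv^2\,e^{-\phi}\dvol = 0$ and $\int_M \lp\nabla\Delta_\phi u,\nabla u\rp e^{-\phi}\dvol = -\int_M(\Delta_\phi u)^2 e^{-\phi}\dvol$, one obtains
\begin{equation*}
 \int_M \left( \lv\nabla^2 u\rv^2 + \Ric_\phi(\nabla u,\nabla u)\right) e^{-\phi}\dvol = \int_M (\Delta_\phi u)^2\,e^{-\phi}\dvol = \lambda_1 \int_M \lv\nabla u\rv^2\,e^{-\phi}\dvol ,
\end{equation*}
where the last step uses $\int_M \lv\nabla u\rv^2 e^{-\phi}\dvol = \lambda_1\int_M u^2 e^{-\phi}\dvol$.

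Discarding $\lv\nabla^2 u\rv^2 \geq 0$ and using $\Ric_\phi(\nabla u,\nabla u) \geq \lambda\lv\nabla u\rv^2$ immediately gives $\lambda_1 \geq \lambda$. If $\lambda_1 = \lambda$, then the displayed identity forces $\nabla^2 u \equiv 0$ and $\Ric_\phi(\nabla u,\nabla u) \equiv \lambda\lv\nabla u\rv^2$ on all of $M$. As $u$ is nonconstant, $\nabla u$ is a nowhere-vanishing parallel field, $\lv\nabla u\rv$ is a positive constant, and the standard splitting argument (the level sets of $u$ are totally geodesic and the normalized gradient flow is by isometries) presents $(M^n,g)$ as an isometric product $(\bR\times N^{n-1},dx^2\oplus h)$ in which $u$ is affine in the $\bR$-coordinate; after an affine change of coordinate, $u = \lv\nabla u\rv\,x$. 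On this product $\Delta u = 0$, so $-\Delta_\phi u = \lambda u$ reduces to $\lp\nabla\phi,\nabla u\rp = \lambda u$, i.e.\ $\partial_x\phi = \lambda x$; hence $\phi - \tfrac{\lambda}{2}x^2$ is $x$-independent and equals some $\psi\in C^\infty(N)$. Finally, on the product $\nabla^2\phi = \lambda\,dx^2\oplus\nabla_h^2\psi$, so $\Ric_\phi = \lambda\,dx^2\oplus\Ric_\psi$, and the hypothesis $\Ric_\phi\geq\lambda g$ restricts on $N$ to $\Ric_\psi\geq\lambda h$, as claimed.

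The only real difficulty is the possible noncompactness of $M$: one must know that the spectral gap $\lambda_1(-\Delta_\phi)$ is attained by a genuine $L^2(e^{-\phi}\dvol)$-eigenfunction and that this eigenfunction, together with $\lv\nabla u\rv^2$ and $\nabla^2 u$, is integrable and decays fast enough that no boundary terms appear in the integrations by parts. This is exactly the analytic input of~\cite{ChengZhou2013} --- discreteness of the spectrum and decay estimates for the drift Laplacian under $\Ric_\phi \geq \lambda g > 0$ --- which I would cite; granting it, the Bochner computation and the product-splitting argument are identical to the compact case.
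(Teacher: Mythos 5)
The paper does not actually supply a proof of this proposition: it is stated as a result ``proven by Cheng and Zhou~\cite{ChengZhou2013}'' and is simply cited. Your reconstruction is the standard weighted Bochner/Lichnerowicz--Obata argument, and it is correct; it is also, to the best of my knowledge, the argument that Cheng and Zhou actually give. The inequality step, the rigidity analysis $\nabla^2 u\equiv0$ forcing a parallel gradient and hence a metric splitting, and the reduction of the eigenvalue equation to $\partial_x\phi=\lambda x$ on the line factor are all accurate. You correctly identify the only genuinely delicate point --- that on a complete noncompact $M$ one must know the bottom of the spectrum of $-\Delta_\phi$ acting on mean-zero functions is a discrete eigenvalue attained by an $L^2(e^{-\phi}\dvol)$ eigenfunction with enough decay to kill the boundary terms --- and you handle it the only reasonable way for this paper, by citing~\cite{ChengZhou2013} (and implicitly~\cite{Morgan2005} for finiteness of the measure). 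One small remark: after concluding $\nabla^2 u\equiv0$ and that $u$ is nonconstant, it is worth saying explicitly that $M$ is assumed connected, so that the parallel field $\nabla u$ is nowhere zero; this is tacit but worth stating for the splitting argument. With that caveat, your proof is a faithful and complete account of the result the paper invokes without re-proving.
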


In fact, Cheng and Zhou~\cite{ChengZhou2013} show that if $\lambda$ is an eigenvalue of $-\Delta_\phi$ of multiplicity $k\geq1$ for $(M^n,g,e^{-\phi}\dvol)$ as in Proposition~\ref{prop:weighted_lichnerowicz}, then $k\leq n$ and $(M^n,g,e^{-\phi}\dvol)$ splits isometrically as a product with a $k$-dimensional shrinking Gaussian.

We can now show that shrinking gradient Ricci solitons are local minima of the $\mW_1$-functional.

\begin{thm}
\label{thm:local_extrema_W1}
Let $(M^n,g,e^{-\phi}\dvol)$ be a volume-normalized shrinking gradient Ricci soliton with $\Ric_\phi=\frac{1}{2\tau}g$.  Let $(\phi_t,\tau_t)\in\mC_1(g)$ be a smooth variation of $(g,\tau)$.  Then
\begin{equation}
\label{eqn:local_extrema_W1}
\left.\frac{d^2}{dt^2}\mW_1\left(g,\phi_t,\tau_t\right)\right|_{t=0} \geq 0 .
\end{equation}
Moreover, if $(M^n,g,e^{-\phi}\dvol)$ does not factor through a shrinking Gaussian, then the inequality~\eqref{eqn:local_extrema_W1} is strict for nontrivial variations.
\end{thm}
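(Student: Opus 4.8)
The plan is to compute the second variation of $\mW_1$ at the soliton explicitly and recognize it as a sum of manifestly nonnegative terms. Since $\cRic_\phi = 0$ for the parameter $\lambda=\frac1{2\tau}$, Corollary~\ref{cor:critW1} shows that the soliton is a critical point of $\mW_1\colon\mC_1(g)\to\bR$, so $\left.\frac{d^2}{dt^2}\mW_1(g,\phi_t,\tau_t)\right|_{t=0}$ depends only on $(\psi,s):=(\phi^\bullet,\tau^\bullet)$, which obey the linearized constraint $\int_M\bigl(\psi+\frac n{2\tau}s\bigr)\,d\mu=0$ with $d\mu:=(4\pi\tau)^{-n/2}e^{-\phi}\dvol$ (one may, as in Corollary~\ref{cor:critW1}, equivalently pass to variations of $(g,\phi)$ with $g^\bullet=cg$). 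Following Perelman, I would substitute $v=(4\pi\tau)^{-n/4}e^{-\phi/2}$, so that on the constraint $\int_M v^2\,\dvol=1$ one has $\mW_1=\int_M\bigl[\frac\tau2(Rv^2+4\lvert\nabla v\rvert^2)-v^2\ln v\bigr]\,\dvol-\frac n4\ln(4\pi\tau)-\frac n2$, then differentiate twice, using the soliton structure equations ($\Ric+\nabla^2\phi=\frac1{2\tau}g$, the Bianchi identity $\delta_\phi\cRic_\phi=d\csigma_{1,\phi}$, and $\Delta_\phi\phi_0=-\frac1\tau\phi_0$ from Lemma~\ref{lem:grs_potential}) together with several integrations by parts against $e^{-\phi}\dvol$. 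Writing $q:=\frac12\psi+\frac n{4\tau}s$, which has $\int_M q\,d\mu=0$ by the constraint, this should yield
\[ \left.\frac{d^2}{dt^2}\mW_1(g,\phi_t,\tau_t)\right|_{t=0} = 4\tau\int_M\lvert\nabla q\rvert^2\,d\mu - 2\int_M q^2\,d\mu + \frac{2s}{\tau}\int_M\phi_0\,q\,d\mu + \frac n{4\tau^2}s^2, \]
with $\phi_0=\phi-\frac n2-2\tau\csigma_{1,\phi}$ as in Lemma~\ref{lem:grs_potential}; the additive-constant ambiguity in $\phi$ drops out because $\int_M q\,d\mu=0$.

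To see nonnegativity, I would decompose $q=\beta\phi_0+q'$ with $q'$ orthogonal to $\phi_0$ in $L^2(M,d\mu)$. Since $\phi_0$ is an eigenfunction of $-\Delta_\phi$ with eigenvalue $\frac1\tau$ (Lemma~\ref{lem:grs_potential}), the gradient and $L^2$ inner products split; also $\int_M q'\,d\mu=0$, and the second variation becomes
\[ \Bigl(2\lVert\phi_0\rVert^2\beta^2 + \frac 2\tau\lVert\phi_0\rVert^2 s\beta + \frac n{4\tau^2}s^2\Bigr) + \Bigl(4\tau\int_M\lvert\nabla q'\rvert^2\,d\mu - 2\int_M (q')^2\,d\mu\Bigr), \]
where $\lVert\cdot\rVert$ is the $L^2(M,d\mu)$-norm. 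The second bracket is nonnegative by the weighted Lichnerowicz inequality (Proposition~\ref{prop:weighted_lichnerowicz}, applied to the mean-free $q'$ using $\Ric_\phi=\frac1{2\tau}g>0$, so that $\lambda_1(-\Delta_\phi)\ge\frac1{2\tau}$). The first bracket is a binary quadratic form with determinant $\tau^{-2}\lVert\phi_0\rVert^2\bigl(\frac n2-\lVert\phi_0\rVert^2\bigr)$, which is nonnegative precisely by the inequality $\int_M\phi_0^2\,d\mu\le\frac n2$ of Lemma~\ref{lem:grs_potential}; since its leading coefficient is also nonnegative, it is positive semidefinite. Together these give~\eqref{eqn:local_extrema_W1}.

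For the equality statement, assume $(M^n,g,e^{-\phi}\dvol)$ does not factor through a shrinking Gaussian. In particular it is not itself a shrinking Gaussian, so by Lemma~\ref{lem:grs_potential} the inequality $\int_M\phi_0^2\,d\mu\le\frac n2$ is strict when $\phi_0\not\equiv0$, and by the rigidity clause of Proposition~\ref{prop:weighted_lichnerowicz} (with the Cheng–Zhou refinement) one has $\lambda_1(-\Delta_\phi)>\frac1{2\tau}$ strictly. Hence if the second variation vanishes, both brackets vanish: vanishing of the second forces $q'\equiv0$, and vanishing of the first — now positive definite, or equal to $\frac n{4\tau^2}s^2$ when $\phi_0\equiv0$ — forces $\beta=0$ and $s=0$. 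Thus $q=0$ and $s=0$, so $(\psi,s)=(0,0)$: the variation is trivial. This is exactly the assertion that~\eqref{eqn:local_extrema_W1} is strict for nontrivial variations.

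The main obstacle is the displayed second-variation identity: it requires careful tracking of the $\tau$-direction (the $-\frac n4\ln(4\pi\tau)$ term is responsible for the $\frac n{4\tau^2}s^2$ contribution, and the $q$–$s$ coupling appears only through $\phi_0$, which is what makes the two-by-two reduction possible) together with disciplined integration by parts and repeated use of the shrinker identities; I expect most of the $R$- and $\lvert\nabla\phi\rvert^2$-dependence to cancel, leaving the clean form above. Otherwise the argument runs parallel to Viaclovsky's in the conformal setting, with the two rigidity inputs — the weighted Lichnerowicz inequality and the bound $\int_M\phi_0^2\,d\mu\le\frac n2$ — each detecting exactly the shrinking Gaussian, so that the role of the round sphere is played by a Gaussian factor.
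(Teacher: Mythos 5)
Your argument is correct and essentially identical to the paper's: the second-variation identity you display (with $q=\tfrac12\psi_0$ in the paper's notation) matches the one derived there, the decomposition $q=\beta\phi_0+q'$ is the paper's $\psi_0=c\phi_0+\psi_1$, and both arguments invoke the same two inputs, Proposition~\ref{prop:weighted_lichnerowicz} for the $q'$-part and the bound $\int_M\phi_0^2\,d\mu\le\tfrac n2$ from Lemma~\ref{lem:grs_potential} for the $(\beta,s)$-block. The only cosmetic differences are that the paper derives the identity directly from Corollary~\ref{cor:Dk} rather than via Perelman's $v$-substitution, and it completes the square in $(c,\alpha)$ rather than testing the $2\times2$ determinant as you do.
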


\begin{remark}
\label{rk:local_extrema_W1}
In fact, equality holds in~\eqref{eqn:local_extrema_W1} if and only if either $(M^n,g,e^{-\phi}\dvol)$ is a shrinking Gaussian and the variation $(\phi_t,\tau_t)$ is through shrinking Gaussians or if $(M^n,g,e^{-\phi}\dvol)$ is isometric to a product of a shrinking Gaussian and a correspondingly-normalized shrinking gradient Ricci soliton and the variation $(\phi_t,\tau_t)$ has $\tau_t$ fixed and $\phi_t$ varying only through the choice of basepoint in the Gaussian factor, as is easily concluded from the proof below.
\end{remark}

\begin{proof}

Lemma~\ref{lem:modified_sigmak_grs} and Corollary~\ref{cor:critW1} together imply that $(\phi,\tau)$ is a critical point of $\mW_1\colon\mC_1(g)\to\bR$, and hence we may compute the left-hand side of~\eqref{eqn:local_extrema_W1} by restricting to paths $(\phi_t,\tau_t)\in\mC_1(g)$ of the form $\phi_t=\phi+t\psi$ and $\tau_t=\tau+\alpha t$ for some $\psi\in C^\infty(M)$ and some $\alpha \in\bR$.  Moreover, from Corollary~\ref{cor:Dk} it is easy to see that
\begin{equation}
\label{eqn:ddt_csigma1}
\frac{d}{dt}\left(\tau_t\csigma_{1,\phi_t}\right) = \left(\tau_t\Delta_{\phi_t}+\frac{1}{2}\right)\left(\psi+\frac{n\alpha}{2\tau_t}\right) + \alpha\left(\sigma_{1,\phi_t}-\frac{n}{4\tau_t}\right) .
\end{equation}
It therefore follows that
\[ \frac{d}{dt}\mW_1(g,\phi_t,\tau_t) = \int_M \left[-\tau_t\csigma_{1,\phi_t} + \frac{1}{2}\right]\left(\psi+\frac{n\alpha}{2\tau_t}\right) + \alpha\left(\sigma_{1,\phi_t}-\frac{n}{4\tau_t}\right) , \]
where the integral is taken with respect to $(4\pi\tau_t)e^{-\phi_t}\dvol$ (cf.\ Proposition~\ref{prop:first_variation1}).  Since $(\phi,\tau)$ is a critical point of $\mW_1\colon\mC_1(g)\to\bR$, it follows from the above display that
\begin{align*}
\left.\frac{d^2}{dt^2}\mW_1(g,\phi_t,\tau_t)\right|_{t=0} & = \int_M \left(\psi+\frac{n\alpha}{2\tau_t}\right)\left.\frac{d}{dt}\left(-\tau_t\csigma_{1,\phi_t}\right)\right|_{t=0} \\
& \quad + \int_M \frac{n\alpha^2}{4\tau^2} - \alpha\left(\sigma_{1,\phi}-\frac{n}{4\tau}\right)\psi_0 \\
& = \int_M \tau\lv\nabla\psi_0\rv^2 - \frac{1}{2}\psi_0^2 - 2\alpha\psi_0\left(\sigma_{1,\phi}-\frac{n}{4\tau}\right) + \frac{n\alpha^2}{4\tau^2} ,
\end{align*}
where $\psi_0:=\psi+\frac{n\alpha}{2\tau}$ and all integrals are taken with respect to $(4\pi\tau)^{-\frac{n}{2}}e^{-\phi}\dvol$.  We simplify the above display using two facts.  First, since $\Ric_\phi=\frac{1}{2\tau}g$, it holds that $\sigma_{1,\phi}-\frac{n}{4\tau}=\frac{1}{2}\Delta_\phi\phi$.  Second, we may find a constant $c\in\bR$ such that $\psi_0=\psi_1+c\phi_0$ for $\phi_0$ as in Lemma~\ref{lem:grs_potential} and $\psi_1\in C^\infty(M)$ such that $\int \psi_1\phi_0=0$.  In particular, this implies that
\[ \int_M \lv\nabla\psi_0\rv^2 = \int_M \lv\nabla\psi_1\rv^2 + c^2\lv\nabla\phi_0\rv^2 \quad\text{and}\quad \int_M \psi_0^2 = \int_M \psi_1^2 + c^2\phi_0^2 . \]
Using Lemma~\ref{lem:grs_potential} we compute that
\begin{align*}
\left.\frac{d^2}{dt^2}\mW_1(g,\phi_t,\tau_t)\right|_{t=0} & = \int_M \tau\lv\nabla\psi_1\rv^2 - \frac{1}{2}\psi_1^2 + \frac{c^2}{2}\phi_0^2 + \frac{\alpha c}{\tau}\phi_0^2 + \frac{n\alpha^2}{4\tau^2} \\
& \geq \int_M \tau\lv\nabla\psi_1\rv^2 - \frac{1}{2}\psi_1^2 + \frac{1}{2}\left(c+\frac{\alpha}{\tau}\right)^2\phi_0^2
\end{align*}
with equality if and only if $(M^n,g,e^{-\phi}\dvol)$ is isometric to a shrinking Gaussian.  It is then an immediate consequence of Proposition~\ref{prop:weighted_lichnerowicz} that~\eqref{eqn:local_extrema_W1} holds, and moreover, that the inequality is strict if $(M^n,g,e^{-\phi}\dvol)$ does not factor through a shrinking Gaussian.
\end{proof}

\subsection{Local extrema of $\mW_2$}
\label{subsec:local_extrema/2}

Lemma~\ref{lem:grs_potential} and Proposition~\ref{prop:weighted_lichnerowicz} also provide the main observations necessary to prove Theorem~\ref{thm:intro/local_extrema_W2} from the introduction, which we restate here for the convenience of the reader.

\begin{thm}
\label{thm:local_extrema_W2}
Let $(M^n,g,e^{-\phi}\dvol)$ be a volume-normalized shrinking gradient Ricci soliton with $\Ric_\phi=\frac{1}{2\tau}g$.  Let $(\phi_t,\tau_t)\in\mC_1(g)$ be a smooth variation of $(\phi,\tau)$.  Then
\begin{equation}
\label{eqn:local_extrema_W2}
\left.\frac{d^2}{dt^2}\mW_2\left(g,\phi_t,\tau_t\right)\right|_{t=0} \leq 0 .
\end{equation}
Moreover, equality holds in~\eqref{eqn:local_extrema_W2} for nontrivial variations $(\phi_t,\tau_t)\in\mC_1(g)$ if and only if $(M^n,g,e^{-\phi}\dvol)$ factors as an isometric product with a shrinking Gaussian and the variation is tangent to a curve through shrinking Gaussians in the Euclidean factor.
\end{thm}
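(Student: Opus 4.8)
The plan is to mimic the proof of Theorem~\ref{thm:local_extrema_W1}, replacing the first variation of $\mW_1$ by that of $\mW_2$ and carrying along the extra curvature terms which enter through $\csigma_{2,\phi}$. The hypotheses give $\cRic_\phi=0$ for the parameter $\lambda=\frac1{2\tau}$, so by Lemma~\ref{lem:modified_sigmak_grs} the tensors $\cE_{k,\phi}$ and the weighted Bach tensor $\cB_\phi$ all vanish and $\csigma_{k,\phi}=\frac1{k!}(\csigma_{1,\phi})^k$ is constant; in particular Theorem~\ref{thm:intro/vary_w2} shows $(\phi,\tau)$ is a critical point of $\mW_2\colon\mC_1(g)\to\bR$. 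As in the proof of Theorem~\ref{thm:local_extrema_W1}, criticality allows us to evaluate the left-hand side of~\eqref{eqn:local_extrema_W2} along an arbitrary affine path $\phi_t=\phi+t\psi$, $\tau_t=\tau+\alpha t$ subject to the linearized constraint $\int_M\psi_0\,(4\pi\tau)^{-\frac n2}e^{-\phi}\dvol=0$, where $\psi_0:=\psi+\frac{n\alpha}{2\tau}$.

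The main work is the computation of $\frac{d^2}{dt^2}\mW_2(g,\phi_t,\tau_t)|_{t=0}$. I would differentiate the first variation of $\mW_2$ once more, using Proposition~\ref{prop:first_variation2} together with the explicit $\tau$-dependence of $\lambda=\frac1{2\tau}$ handled exactly as the $\tau$-terms were for $\mW_1$, and using Proposition~\ref{prop:diff} (equivalently Corollary~\ref{cor:Dk}), which at the soliton gives $\csigma_{2,\phi}^\bullet=\csigma_{1,\phi}(\Delta_\phi+\lambda)\psi$. Since $\csigma_{2,\phi}=\frac12(\csigma_{1,\phi})^2-\frac12|\cRic_\phi|^2$ and $\cRic_\phi=0$, the second-order term contributes $-|\cRic_\phi^\bullet|^2=-|\nabla^2\psi|^2$; I would eliminate $\int|\nabla^2\psi|^2$ in favor of $\int(\Delta_\phi\psi)^2$ and $\int|\nabla\psi|^2$ via the weighted Bochner formula and $\Ric_\phi=\frac1{2\tau}g$, and then collect terms. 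I expect the outcome to have the form
\[ \left.\frac{d^2}{dt^2}\mW_2(g,\phi_t,\tau_t)\right|_{t=0}=\int_M Q(\psi_0,\alpha)\,(4\pi\tau)^{-\frac n2}e^{-\phi}\dvol \]
for a quadratic form $Q$ built from $|\nabla\psi_0|^2$, $\psi_0^2$, $\psi_0$ and $\alpha$ with coefficients depending only on $\tau$ and the constant $\csigma_{1,\phi}$. Tracking these coefficients carefully — in particular the sign changes forced by $\csigma_{1,\phi}\le0<\frac1{2\tau}$ from Lemma~\ref{lem:modified_sigmak_grs} — is the principal obstacle; it is heavier than in the $\mW_1$ case but entirely parallel, and one may use the identities of Lemma~\ref{lem:grs_potential} and Lemma~\ref{lem:total_sigmak_grs} to evaluate the constant-coefficient contributions.

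With the quadratic form in hand, I would conclude as for $\mW_1$: decompose $\psi_0=\psi_1+c\phi_0$, where $\phi_0$ is the mean-free part of $\phi$ from Lemma~\ref{lem:grs_potential}, an eigenfunction of $-\Delta_\phi$ with eigenvalue $\frac1\tau$, and $\psi_1$ is orthogonal to $\phi_0$ and to the constants in $L^2(e^{-\phi}\dvol)$. Proposition~\ref{prop:weighted_lichnerowicz}, applicable since $\Ric_\phi=\frac1{2\tau}g>0$, yields $\int_M|\nabla\psi_1|^2\ge\frac1\tau\int_M\psi_1^2$ and $\int_M|\nabla\phi_0|^2=\frac1\tau\int_M\phi_0^2$, while Lemma~\ref{lem:grs_potential} gives $\int_M\phi_0^2(4\pi\tau)^{-\frac n2}e^{-\phi}\dvol\le\frac n2$; these should combine to force $Q$ to integrate to a nonpositive quantity, establishing~\eqref{eqn:local_extrema_W2}. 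For the equality statement, one traces equality back through each inequality: equality in Proposition~\ref{prop:weighted_lichnerowicz} splits off an isometric $\bR$-factor on which $\phi$ is quadratic, equality in the $L^2$-bound of Lemma~\ref{lem:grs_potential} forces a shrinking Gaussian, and iterating the splitting (as in~\cite{ChengZhou2013,Viaclovsky2000}) identifies $(M^n,g,e^{-\phi}\dvol)$ as an isometric product with a shrinking Gaussian on which the variation $(\phi_t,\tau_t)$ moves only through the Gaussian family. This final identification is where one must be more careful than in Remark~\ref{rk:local_extrema_W1}, since the $\mW_2$ equality case is strictly more restrictive than the $\mW_1$ one.
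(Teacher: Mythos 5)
Your plan follows the paper's proof almost exactly: reduce to an affine path $\phi_t=\phi+t\psi$, $\tau_t=\tau+\alpha t$ using criticality, compute the quadratic second-variation form, decompose $\psi_0=\psi_1+c\phi_0$, and close the argument with the Cheng--Zhou weighted Lichnerowicz--Obata estimate (Proposition~\ref{prop:weighted_lichnerowicz}), the bound $\int\phi_0^2\leq\frac n2$ from Lemma~\ref{lem:grs_potential}, the constant-coefficient identities from Lemma~\ref{lem:total_sigmak_grs}, and the sign $\csigma_{1,\phi}\leq 0$ from Lemma~\ref{lem:modified_sigmak_grs}. The one place you diverge is the middle bookkeeping: you propose differentiating $\csigma_{2,\phi}=\frac12(\csigma_{1,\phi})^2-\frac12\lv\cRic_\phi\rv^2$ directly, producing a $-\lv\nabla^2\psi\rv^2$ term, and trading it for $(\Delta_\phi\psi)^2$ and $\lv\nabla\psi\rv^2$ by the weighted Bochner formula (after which the $(\Delta_\phi\psi)^2$ cancels against the one coming from $(\csigma_{1,\phi}^\bullet)^2$). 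The paper instead differentiates the divergence-form expression $\cD_{2,\phi}(\psi)=\delta_\phi(\cT_{1,\phi}\nabla\psi)+\lambda\csigma_{1,\phi}\psi$ from Corollary~\ref{cor:Dk}, which after a single integration by parts yields only the first-order term $\cT_{1,\phi}(\nabla\psi_0,\nabla\psi_0)=\csigma_{1,\phi}\lv\nabla\psi_0\rv^2$ and never sees $\lv\nabla^2\psi\rv^2$; this is slightly leaner and avoids the extra Bochner step, but the two routes are equivalent modulo integration by parts and both arrive at the same quadratic form $\tau^2\bigl(\csigma_{1,\phi}-\frac1{2\tau}\bigr)\int\bigl[\lv\nabla\psi_1\rv^2-\frac1{2\tau}\psi_1^2+\frac1{2\tau}(c+\frac\alpha\tau)^2\phi_0^2\bigr]$, whose nonpositivity and equality analysis you correctly identify. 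Two small corrections: the vanishing of $\cE_{k,\phi}$ and $\cB_\phi$ follows immediately from $\cRic_\phi=0$ rather than from Lemma~\ref{lem:modified_sigmak_grs} (which gives constancy of $\csigma_{k,\phi}$); and you have not actually carried out the coefficient bookkeeping you flag as the principal obstacle, so as written this is a correct plan rather than a completed proof.
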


\begin{proof}

As in the proof of Theorem~\ref{thm:local_extrema_W1}, we may suppose that $\phi_t=\phi+t\psi$ and that $\tau_t=\tau+\alpha t$ for some $\psi\in C^\infty(M)$ and some $t\in\bR$.  Using Corollary~\ref{cor:Dk} it is easy to compute that
\begin{equation}
\label{eqn:ddt_csigma2}
\begin{split}
\frac{d}{dt}\left(\tau_t^2\csigma_{2,\phi_t}\right) & = \tau_t^2\delta_{\phi_t}\left(\cT_{1,\phi_t}(\nabla\psi)\right) + \frac{\tau_t}{2}\csigma_{1,\phi_t}\left(\psi+\frac{n\alpha}{2\tau_t}\right) + 2\alpha\tau_t\sigma_{2,\phi_t} \\
& \quad + \frac{\alpha}{2}\left((\phi_t-n+2)\sigma_{1,\phi_t} - \Delta_{\phi_t}\phi_t - \frac{n}{2}\csigma_{1,\phi_t}\right) .
\end{split}
\end{equation}
In particular, it holds that
\begin{align*}
\frac{d}{dt}\mW_2(g,\phi_t,\tau_t) & = \int_M \left(-\tau^2\csigma_{2,\phi_t} + \frac{\tau_t}{2}\csigma_{1,\phi_t}\right)\left(\psi + \frac{n\alpha}{2\tau_t}\right) \\
& \quad + \alpha\int_M \left(2\tau_t\sigma_{2,\phi_t} + \frac{1}{2}\left(\phi_t-n+2\right)\sigma_{1,\phi_t} - \frac{n}{4}\csigma_{1,\phi_t}\right) .
\end{align*}
Differentiating again and using the assumption that $\Ric_\phi=\frac{1}{2\tau}g$ yields
\begin{align*}
\left.\frac{d^2}{dt^2}\mW_2(g,\phi_t,\tau_t)\right|_{t=0} & = \int_M \left[ \tau^2\cT_{1,\phi} - \frac{\tau}{2}g\right](\nabla\psi_0,\nabla\psi_0) - \frac{\tau}{2}\left(\csigma_{1,\phi}-\frac{1}{2\tau}\right)\psi_0^2 \\
& \quad - 2\alpha \int_M \left[ 2\tau\sigma_{2,\phi} + \frac{1}{2}(\phi-n+2)\sigma_{1,\phi} - \frac{1}{2}\sigma_{1,\phi} - \frac{1}{2}\Delta_\phi\phi\right]\psi_0 \\
& \quad + \alpha^2\int_M \left[ 2\sigma_{2,\phi} + \frac{n}{8\tau^2}(\phi-n) - \frac{n}{4\tau}\sigma_{1,\phi} + \frac{n^2}{16\tau^2} \right]
\end{align*}
for $\psi_0:=\psi+\frac{n\alpha}{2\tau}$; note that we have made some simplifications by using the fact that $\int_M\psi_0=0$.  We simplify this with two computations.  First, it is straightforward to verify using~\eqref{eqn:unmodified_sigma1} and~\eqref{eqn:unmodified_sigma2} that
\begin{align*}
2\tau\sigma_{2,\phi} + \frac{1}{2}\left(\phi-n+2\right)\sigma_{1,\phi} - \frac{1}{2}\Delta_\phi\phi - \frac{1}{2}\sigma_{1,\phi} = -\frac{1}{2}\left(\csigma_{1,\phi}-\frac{1}{2\tau}\right)\left(\phi_0 - \frac{n}{2}\right) .
\end{align*}
Second, using Lemma~\ref{lem:total_sigmak_grs} we compute that
\begin{align*}
\int_M \left[ 2\sigma_{2,\phi} + \frac{n}{8\tau^2}(\phi-n) - \frac{n}{4\tau}\sigma_{1,\phi} + \frac{n^2}{16\tau^2} \right] & = \int_M \left[ \frac{1}{4\tau^2}\phi_0^2 - \frac{n}{4\tau^2} + \frac{n}{4\tau}\csigma_{1,\phi} \right] \\
& \leq \frac{1}{2\tau}\left(\csigma_{1,\phi} - \frac{1}{2\tau}\right)\int_M \phi_0^2,
\end{align*}
where the last inequality follows from Lemma~\ref{lem:modified_sigmak_grs} and Lemma~\ref{lem:grs_potential}.  Writing $\psi_0=\psi_1+c\phi_0$ as in the proof of Theorem~\ref{thm:local_extrema_W1}, it follows that
\[ \left.\frac{d^2}{dt^2}\mW_2(g,\phi_t,\tau_t)\right|_{t=0} \leq \tau^2\left(\csigma_{1,\phi_t}-\frac{1}{2\tau}\right)\int_M \left[ \lv\nabla\psi_1\rv^2 - \frac{1}{2\tau}\psi_1^2 + \frac{1}{2\tau}\left(c+\frac{\alpha}{\tau}\right)^2\phi_0^2 \right] . \]
By Lemma~\ref{lem:modified_sigmak_grs} and Proposition~\ref{prop:weighted_lichnerowicz} we see that this is nonpositive and vanishes for nontrivial deformations if and only if (a) $(M^n,g,e^{-\phi}\dvol)$ factors through a shrinking Gaussian, $\alpha=0$, and $-\Delta_\phi\psi_0=\frac{1}{2\tau}\psi_0$; or (b) $(M^n,g,e^{-\phi}\dvol)$ is a shrinking Gaussian, $c+\frac{\alpha}{\tau}=0$, and $-\Delta_\phi\psi_1=\frac{1}{2\tau}\psi_1$.
\end{proof}

\subsection{Local extrema and constant $\hsigma_{k,\phi}$}
\label{subsec:local_extrema/k}

For similar reasons as in Subsection~\ref{subsec:obata/k}, we show in this subsection that shrinking gradient Ricci solitons on flat manifolds are local extrema for the functionals $\hmW_k\colon\mC_1(dx^2)\to\bR$ described at the end of Section~\ref{sec:variational_status}.  Since the only shrinking gradient Ricci solitons on flat manifolds are the shrinking Gaussians, the global classification is a consequence of Theorem~\ref{thm:obata/k}, and hence the result of this subsection is that the shrinking Gaussians minimize (resp.\ maximize) the $\hmW_k$-functional within the negative weighted elliptic $k$-cone $\hGamma_k^{\infty,-}$ when $k$ is odd (resp.\ even).

\begin{thm}
\label{thm:local_extrema_Wk}
Let $(\bR^n,dx^2,e^{-\phi}\dvol)$ be a shrinking Gaussian with $\Ric_\phi=\frac{1}{2\tau}g$.  Let $(\phi_t,\tau_t)\in\mC_1(dx^2)$ be a smooth variation of $(\phi,\tau)$.  Then
\begin{equation}
\label{eqn:local_extrema_Wk}
(-1)^k\left.\frac{d^2}{dt^2}\hmW_k(dx^2,\phi_t,\tau_t)\right|_{t=0} \leq 0 .
\end{equation}
Moreover, equality holds if and only if $(\phi_t,\tau_t)$ is tangent to a curve in $\mC_1(dx^2)$ through shrinking Gaussians.
\end{thm}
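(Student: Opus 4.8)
The plan is to follow the proof of Theorem~\ref{thm:local_extrema_W2}, taking advantage of the fact that on a flat manifold the only volume-normalized shrinking gradient Ricci solitons are the shrinking Gaussians, at which $\cRic_\phi\equiv0$ and $\cY_\phi\equiv0$ for the parameter $\lambda=\frac{1}{2\tau}$.  Proposition~\ref{prop:ordinary_sigmak} then gives $\hsigma_{j,\phi}=\frac{(-\lambda)^j}{j!}$, a nonzero constant, while $\hT_{j,\phi}=\hsigma_{j,\phi}\Id$ and $\hE_{j,\phi}=0$ for every $j$; in particular $d\hsigma_{k,\phi}=0$ and $\int_{\bR^n}\tr\hE_{k,\phi}\,e^{-\phi}\dvol=0$, so $(\phi,\tau)$ is a critical point of $\hmW_k\colon\mC_1(dx^2)\to\bR$ and its second variation may be computed along the affine paths $\phi_t=\phi+t\psi$, $\tau_t=\tau+\alpha t$ satisfying the linearized constraint $\int_{\bR^n}\psi_0\,(4\pi\tau)^{-\frac n2}e^{-\phi}\dvol=0$, where $\psi_0:=\psi+\frac{n\alpha}{2\tau}$.

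For the first variation I would use the identity $\hmW_k(\phi,\tau)^\bullet=-\int_{\bR^n}\tau^k\hsigma_{k,\phi}\psi\,(4\pi\tau)^{-\frac n2}e^{-\phi}\dvol$ from the end of Section~\ref{sec:variational_status} (for variations $\phi_t=\phi+t\psi$), together with Lemma~\ref{lem:mW_scale_invariance} to absorb the $\tau$-direction exactly as in the proof of Corollary~\ref{cor:critW1}.  The needed ingredient is $\frac{d}{dt}\bigl(\tau_t^k\hsigma_{k,\phi_t}\bigr)$: by Remark~\ref{rk:divergence} the $\phi$-linearization of $\hsigma_{k,\phi}$ is given by the formula of Corollary~\ref{cor:Dk} with $\csigma_{j,\phi},\cT_{j,\phi}$ replaced by $\hsigma_{j,\phi},\hT_{j,\phi}$, and since $g$ is flat and $\cRic_\phi=0$ at the Gaussian this collapses to $\hsigma_{k-1,\phi}(\Delta_\phi+\lambda)\psi$; the $\tau$-direction contributes the $\lambda$-derivative of $\sigma_k^\infty(\cY_\phi-\lambda;\cRic_\phi)$, which — because $\partial_\lambda(\cY_\phi-\lambda)=\phi-1$ and $\partial_\lambda\cRic_\phi=-g$ — produces only lower-order terms proportional to $\hsigma_{k-1,\phi}$, in direct analogy with~\eqref{eqn:ddt_csigma1} and~\eqref{eqn:ddt_csigma2}.

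Differentiating once more at $t=0$ and discarding every term of order $\ge2$ in $\cRic_\phi$ (which vanishes there), the second variation should collapse, up to a positive multiplicative constant and the sign $(-1)^{k-1}$, to the quadratic form already encountered in the proof of Theorem~\ref{thm:local_extrema_W1}, namely a multiple of
\[
\int_{\bR^n}\Bigl[\tau\lv\nabla\psi_0\rv^2-\tfrac12\psi_0^2+\tfrac{\alpha}{\tau}\psi_0\phi_0+\tfrac{n\alpha^2}{4\tau^2}\Bigr](4\pi\tau)^{-\frac n2}e^{-\phi}\dvol ,
\]
with $\phi_0=\phi-\frac n2$ as in Lemma~\ref{lem:grs_potential}.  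Writing $\psi_0=\psi_1+c\phi_0$ with $\int_{\bR^n}\psi_1\phi_0\,e^{-\phi}\dvol=0$, noting that then $\int_{\bR^n}\psi_1\,e^{-\phi}\dvol=0$ as well, and using $-\Delta_\phi\phi_0=2\lambda\phi_0$ together with $\int_{\bR^n}\phi_0^2\,(4\pi\tau)^{-\frac n2}e^{-\phi}\dvol=\frac n2$ (Lemma~\ref{lem:grs_potential}, which holds with equality on the Gaussian), one completes the square to reduce the form to $\tau\int\lv\nabla\psi_1\rv^2-\tfrac12\psi_1^2$ plus a nonnegative multiple of $\bigl(c+\tfrac{\alpha}{\tau}\bigr)^2$; this is $\ge0$ by the weighted Lichnerowicz--Obata estimate $\int\lv\nabla\psi_1\rv^2\ge\lambda\int\psi_1^2$ of Proposition~\ref{prop:weighted_lichnerowicz}, valid since $\Ric_\phi=\lambda g>0$.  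Multiplying back by $(-1)^{k-1}$ times the positive constant yields~\eqref{eqn:local_extrema_Wk}, and equality forces $c+\frac{\alpha}{\tau}=0$ and $\psi_1$ to lie in the bottom eigenspace of $-\Delta_\phi$, i.e.\ to be an affine function; exactly as in Theorem~\ref{thm:local_extrema_W2} these are the infinitesimal deformations obtained by translating $x_0$ and rescaling $\tau$ in $\phi=\frac{\lv x-x_0\rv^2}{4\tau}$ (matched to the constraint), so equality holds precisely when $(\phi_t,\tau_t)$ is tangent to a curve of shrinking Gaussians.

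I expect the one genuine obstacle to be the algebraic reduction asserted at the start of the third paragraph: showing that $\frac{d^2}{dt^2}\bigl(\tau_t^k\hsigma_{k,\phi_t}\bigr)$, after integration by parts against $(4\pi\tau)^{-\frac n2}e^{-\phi}\dvol$ and use of $\cRic_\phi=0$, really does reduce to a scalar multiple of the $k=1$ form.  The subtle point is that $\cRic_{\phi_t}^\bullet=\nabla^2\psi$ is a full symmetric $2$-tensor, so a priori the expansion of $\sigma_k^\infty(\cY_\phi-\lambda;\cRic_\phi)$ carries a term proportional to $\sigma_2(\nabla^2\psi)$; one must check that, because the \emph{total} functional $\hmW_k$ is being differentiated at a \emph{critical} point, Lemma~\ref{lem:variational_characterization} forces the second variation to be expressible through the linearization alone — equivalently, one invokes $\delta_\phi\hE_{k,\phi}=-d\hsigma_{k,\phi}$ on flat manifolds (Corollary~\ref{cor:divEk} and Remark~\ref{rk:divergence}) to rewrite the variation-of-$\phi$ part of the Hessian directly — so that no quadratic-in-$\nabla^2\psi$ term survives.
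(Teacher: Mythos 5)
Your proposal takes a genuinely different route from the paper's on one key point: you carry the $\tau$-direction through the second variation explicitly (via $\alpha$ and the decomposition $\psi_0=\psi_1+c\phi_0$, as in Theorems~\ref{thm:local_extrema_W1} and~\ref{thm:local_extrema_W2}), whereas the paper eliminates it at the outset.  The paper's observation is that on $(\bR^n,dx^2)$ a dilation $\Phi_t$ satisfies $\tau_t^{-1}\tau\,\Phi_t^*dx^2 = dx^2$, so by Lemma~\ref{lem:mW_scale_invariance} and diffeomorphism invariance one may replace $(\phi_t,\tau_t)$ by $(\Phi_t^*\phi_t,\tau)$ and take $\tau_t\equiv\tau$ without loss of generality.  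After that, only a pure $\phi$-variation remains: differentiating $\tfrac{d}{dt}\hmW_k=-\int\tau^k\hsigma_{k,\phi_t}\psi\,(4\pi\tau)^{-n/2}e^{-\phi_t}\dvol$ once more at the Gaussian (where $\cRic_\phi=0$, so $\hT_{k-1,\phi}=\hsigma_{k-1,\phi}g$) gives directly
\[
\left.\tfrac{d^2}{dt^2}\hmW_k\right|_{t=0}=\Bigl(-\tfrac12\Bigr)^{k-1}\frac{\tau}{(k-1)!}\int_{\bR^n}\Bigl[\lv\nabla\psi\rv^2-\tfrac{1}{2\tau}\psi^2\Bigr](4\pi\tau)^{-n/2}e^{-\phi}\dvol ,
\]
and the conclusion follows from Proposition~\ref{prop:weighted_lichnerowicz} with $\int\psi\,e^{-\phi}\dvol=0$, the equality case being $\psi$ affine.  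This trick avoids the entire $\alpha$-bookkeeping.

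The weak point of your version is precisely the step you flag as ``the one genuine obstacle,'' but not for the reason you give.  The worry about a $\sigma_2(\nabla^2\psi)$-type contribution is a red herring, and you correctly dispose of it: because the second variation is obtained by differentiating the \emph{first}-variation identity $\hmW_k^\bullet=-\int\tau^k\hsigma_{k,\phi}\psi\,d\mu_\phi$ once more, only the linearization $\hat{\cD}_{k,\phi}(\psi)$ of $\hsigma_{k,\phi}$ appears, never its second linearization, and at $\cRic_\phi=0$ Corollary~\ref{cor:Dk} collapses $\hat{\cD}_{k,\phi}(\psi)$ to $\hsigma_{k-1,\phi}(\Delta_\phi+\lambda)\psi$.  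The real gap is your assertion that ``the second variation should collapse $\ldots$ to a multiple of the $k=1$ form'' \emph{including the $\alpha$- and cross-terms}.  For the $\psi$-only piece this is exactly the display above; for the $\alpha\psi_0$ and $\alpha^2$ pieces, the coefficients come from $\partial_\lambda\hsigma_{k,\phi}$, $\partial_\lambda\hat{\cD}_{k,\phi}$, and the $\tau^k(4\pi\tau)^{-n/2}$ prefactor, and it is not manifest that these reproduce exactly $\bigl(-\tfrac12\bigr)^{k-1}\tfrac{1}{(k-1)!}$ times the $k=1$ quadratic form.  You give no computation showing the square $\bigl(c+\tfrac{\alpha}{\tau}\bigr)^2\phi_0^2$ reconstitutes with the same overall constant.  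In fact the cleanest way to establish this is precisely the dilation identity the paper uses, which shows the $(\psi,\alpha)$-Hessian is conjugate to a pure $\psi'$-Hessian — so you end up needing the paper's trick anyway; you might as well invoke it at the start.  If you retain your formulation, you must either carry out the $\alpha$-terms explicitly for general $k$ or appeal to the dilation argument to justify that they scale as claimed.

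Two minor points.  First, you should be explicit, as the paper is implicit, that the second variation is computed along affine paths that need not lie in $\mC_1(dx^2)$, which is legitimate because $(\phi,\tau)$ is a critical point of the constrained functional and the Lagrange-multiplier correction to the Hessian (a multiple of $\int\psi_0^2$, with constant $\tau^k\hsigma_{k,\phi}$) is exactly what removes the zeroth-order $\hsigma_{k,\phi}\psi_0^2$ term and produces the formula displayed above.  Second, your characterization of equality is right in outcome but the routing through ``$c+\tfrac{\alpha}{\tau}=0$ and $\psi_1$ affine'' needs to be matched against translations and rescalings of the Gaussian, which you gesture at but do not carry out; in the paper's $\tau$-frozen normalization the equality case is simply $\psi(x)=a\cdot x$.
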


\begin{proof}

Without loss of generality, we may suppose that $\tau_t=\tau$.  This is because we can take a one-parameter family of isometries $\Phi\colon\bR^n\to\bR^n$ such that $\tau_t^{-1}\tau\Phi_t^\ast dx^2 = dx^2$, whence Lemma~\ref{lem:mW_scale_invariance} and the diffeomorphism invariance of the $\hmW_k$-functionals imply that
\[ \hmW_k\left(dx^2,\phi_t,\tau_t\right) = \hmW_k\left(\tau_t^{-1}\tau\,dx^2,\phi_t,\tau\right) = \hmW_k\left(dx^2,\Phi_t^\ast\phi_t,\tau\right) . \]
Furthermore, since $(\phi_t,\tau)$ is a critical point of the $\hmW_k$-functional, we may suppose that $\phi_t=\phi+t\psi$.  By the construction of $\hmW_k$, it holds that
\[ \frac{d}{dt}\hmW_k(dx^2,\phi_t,\tau) = -\int_{\bR^n} \tau^k\hsigma_{k,\phi}\psi\,(4\pi\tau)^{-\frac{n}{2}}e^{-\phi}\dvol . \]
Using Corollary~\ref{cor:Dk} and computing as in the proof of Theorem~\ref{thm:local_extrema_W2}, it follows that
\begin{align*}
\left.\frac{d^2}{dt^2}\hmW_k(dx^2,\phi_t,\tau)\right|_{t=0} & = \int_{\bR^n} \tau^k\left[ \hT_{k-1,\phi}(\nabla\psi,\nabla\psi) - \frac{1}{2\tau}\hsigma_{k-1,\phi}\psi^2 \right] (4\pi\tau)^{-\frac{n}{2}}e^{-\phi}\dvol \\
& = \left(-\frac{1}{2}\right)^{k-1}\frac{\tau}{(k-1)!} \int_{\bR^n} \left[ \lv\nabla\psi\rv^2 - \frac{1}{2\tau}\psi^2 \right] (4\pi\tau)^{-\frac{n}{2}}e^{-\phi}\dvol ,
\end{align*}
where the second equality uses the assumption $\Ric_\phi=\frac{1}{2\tau}g$.  Since $\int_{\bR^n}\psi e^{-\phi}=0$, it follows immediately that~\eqref{eqn:local_extrema_Wk} holds with equality if and only if $\psi(x)=a\cdot x$ for some fixed $a\in\bR^n$, which is readily seen to be equivalent to the condition that $(\phi_t,\tau)$ is tangent to a curve in $\mC_1(dx^2)$ through shrinking Gaussians.
\end{proof}

\begin{remark}
 \label{rk:stability_steady_expanding}
 Theorem~\ref{thm:local_extrema_W1}, Theorem~\ref{thm:local_extrema_W2} and Theorem~\ref{thm:local_extrema_Wk} all admit analogues for steady and expanding gradient Ricci solitons.  More precisely, if one wishes to study stability for steady (resp.\ expanding) gradient Ricci solitons among the class of manifolds with density over $(M^n,g)$, one should consider the total weighted $\sigma_k$-curvature functional with $\lambda=0$ (resp.\ $\lambda=-1/2\tau<0$); cf.\ \cite{FeldmanIlmanenNi2005,Perelman1}.  Provided the integrals make sense, the arguments given in the shrinking case readily generalize, with appropriate sign changes, to the steady and expanding cases.
\end{remark}

\bibliographystyle{abbrv}
\bibliography{../bib}
\end{document}